\newtheorem{example}{Example}
\title[Second Moment Phenomenon for Monochromatic Subgraphs]{The Second Moment Phenomenon for Monochromatic Subgraphs}
\author{Bhaswar B. Bhattacharya}
\address{Department of Statistics, University of Pennsylvania, Philadelphia, USA,
{\tt bhaswar@wharton.upenn.edu}}
\author{Somabha Mukherjee}
\address{Department of Statistics, University of Pennsylvania, Philadelphia, USA,
{\tt somabha@wharton.upenn.edu}}
\author[Sumit Mukherjee]{Sumit Mukherjee\textsuperscript{*}}\thanks{\textsuperscript{*}Research partially supported by NSF grant DMS-1712037}
\address{Department of Statistics, Columbia University, New York, USA, {\tt  sm3949@columbia.edu}}
\begin{document}

%

\subjclass[2010]{05C15, 60C05,  60F05, 05D99}
\keywords{Birthday paradox, Combinatorial probability, Graph coloring, Poisson approximation}

\begin{abstract} What is the chance that among a group of $n$ friends, there are $s$ friends all of whom have the same birthday? This is the celebrated birthday problem which can be formulated as the existence of a monochromatic $s$-clique $K_s$ ($s$-matching birthdays)  in the complete graph $K_n$, where every vertex of $K_n$ is uniformly colored with $365$ colors (corresponding to birthdays). More generally, for a general connected graph $H$, let $T(H, G_n)$ be the number of monochromatic copies of $H$ in a uniformly random coloring of the vertices of the graph $G_n$ with $c_n$ colors.   In this paper we show that $T(H, G_n)$ converges to $\dPois(\lambda)$ whenever $\E T(H, G_n) \rightarrow \lambda$ and $\Var T(H, G_n) \rightarrow \lambda$, that is, the asymptotic Poisson distribution of $T(H, G_n)$ is determined just by the convergence of its mean and variance. Moreover, this condition is necessary if and only if $H$ is a star-graph. 
In fact, the second-moment phenomenon is a consequence of a more general theorem about the convergence of $T(H,G_n)$ to a finite linear combination of independent Poisson random variables. As an application, we derive the limiting distribution of $T(H, G_n)$, when $G_n\sim G(n, p)$ is the Erd\H os-R\'enyi random graph. Multiple phase-transitions emerge as $p$ varies from 0 to 1, depending on whether the graph $H$ is balanced or unbalanced.   
\end{abstract}


\maketitle

\section{Introduction}

Let $G_n$ be a simple labeled undirected graph with vertex set $V(G_n):=\{1,2,\cdots,|V(G_n)|\}$, edge set $E(G_n)$, and adjacency matrix $A(G_n)=\{a_{ij}(G_n),i,j\in  V(G_n)\}$.  In a {\it uniformly random $c_n$-coloring of $G_n$}, the vertices of $G_n$ are colored with $c_n$ colors as follows:  
\begin{equation}\P(v\in V(G_n) \text{ is colored with color } a\in \{1, 2, \ldots, c_n\})=\frac{1}{c_n},
\label{eq:uniform}
\end{equation}
independent from the other vertices. Let $X_v$ denote the color of the vertex $v \in V(G_n)$ in a uniformly random $c_n$-coloring of $G_n$. A subgraph $F$ of $G_n$ with vertex set $V(F) = \{v_1,\dots,v_{|V(F)|}\}$ is said to be {\it monochromatic} if $X_{v_1}=\dots=X_{v_{|V(F)|}}$. 

In this paper we consider the problem of determining the limiting distribution of the number of monochromatic copies of a general connected simple graph $H$, in a uniformly random $c_n$-coloring of a graph sequence $G_n$. Formally, this is defined as
\begin{align}\label{eq:TH}
T(H, G_n):=\frac{1}{|Aut(H)|}\sum_{\bm s \in V(G_n)_{|V(H)|}} \prod_{(a,b) \in E(H)}a_{s_a s_b}(G_n) \bm 1\{X_{=\bm s}\},
\end{align}
where:  
\begin{itemize}
\item[--] For a finite set $S$ and a positive integer $N$, $S_N$ denotes the set of all $N$-tuples ${\bm s}=(s_1,\cdots, s_{N})\in  S^N$ with distinct entries.\footnote{For a set $S$, the set $S^N$ denotes the $N$-fold cartesian product $S\times S \times \cdots \times S$.} Thus, the cardinality of $S_N$ is $\frac{|S|!}{(|S|-N)!}$. 
\item[--] For any ${\bm s}=(s_1,\cdots, s_{|V(H)|}) \in  V(G_n)_{|V(H)|}$, 
\begin{align*}
\bm 1\{X_{=\bm s}\}:= \bm 1\{X_{s_1}=\cdots=X_{s_{|V(H)|}}\}.
\end{align*}
\item[--] $Aut(H)$ is the {\it automorphism group} of $H$, that is, the group of permutations $\sigma$ of the vertex set $V(H)$ such that $(x, y) \in E(H)$ if and only if $(\sigma(x), \sigma(y)) \in E(H)$. 
\end{itemize}
 
Hereafter, we assume that $H$ is simple and connected,  with $|V(H)| \geq 2$, and $V(H) = \{1,2,\cdots,$ $|V(H)|\}$. Note that for the case $H=K_2$ is an edge, the statistic \eqref{eq:TH}  counts the number of monochromatic edges. This statistic arises in several contexts, for example, as the Hamiltonian of the Ising/Potts models on $G_n$ \cite{bmpotts}, in non-parametric two-sample tests \cite{fr}, and as a generalization of the birthday paradox \cite{barbourholstjanson,dasguptasurvey,diaconisholmes,diaconismosteller}: If $G_n$ is a friendship-network graph colored uniformly with $c_n=365$ colors (corresponding to birthdays and  assuming the birthdays are uniformly distributed across the year), then two friends will have the same birthday whenever the corresponding edge in the graph $G_n$ is monochromatic.\footnote{When the underlying graph $G_n=K_n$ is the complete graph $K_n$ on $n$ vertices, this reduces to the classical birthday problem.}  Therefore, $\P(T(K_{2}, G_n)>0)$ is the probability that there are two friends with the same birthday. Note that $\P(T(K_{2}, G_n)>0)=1-\P(T(K_{2}, G_n)=0)=1-\chi_{G_n}(c_n)/c_n^{|V(G_n)|}$, where $\chi_{G_n}(c_n)$ counts the number of proper colorings of $G_n$ using $c_n$ colors. The function $\chi_{G_n}$ is known as the {\it chromatic polynomial} of $G_n$, and is a central object in graph theory \cite{chromaticbook,toft_book,toft_unsolved}.  More generally, $s$-matching birthdays in a friendship network $G_n$ corresponds to the case $H=K_s$ (the complete graph on $s$ vertices) in \eqref{eq:TH}. The asymptotics of birthday collisions have found many applications, for example, in the study of coincidences \cite[Problem 3]{diaconismosteller}, hash-function attacks in cryptology \cite{ns}, and the discrete logarithm problem \cite{ghdl,pollarddlp}. 
 
In this paper, we study the asymptotic distribution of $T(H, G_n)$, in the regime where $\E(T(H, G_n))=O(1)$. It is well-known that the limiting distribution of $T(K_{2}, G_n)$,  exhibits a {\it first-moment phenomenon}, that is, $T(K_2, G_n)\dto \dPois(\lambda)$, for any graph sequence $G_n$ such that $\E(T(K_2, G_n))=\frac{1}{c_n} |E(G_n)| \rightarrow \lambda$.  This was shown by Barbour et al. \cite[Theorem 5.G]{barbourholstjanson}, using the Stein's method for Poisson approximation. Recently, Bhattacharya et al. \cite[Theorem 1.1]{BDM} gave a new proof of this result based on the method of moments, which illustrates interesting connections to extremal combinatorics. The first-moment phenomenon is not true for general graphs $H$: it is easy to construct examples where $\E(T(H, G_n)) \rightarrow \lambda $, but $T(H, G_n) \nrightarrow \dPois(\lambda)$ \cite[Section 8]{BDM}, if $H \ne K_2$. In this paper, we show that the limiting distribution of $T(H, G_n)$, for a general connected graph $H$, exhibits a {\it second-moment phenomenon}:  $T(H, G_n) \dto \dPois(\lambda)$ whenever $\E T(H, G_n) \rightarrow \lambda$ and $\Var T(H, G_n) \rightarrow \lambda$, that is, the limiting Poisson distribution of $T(H, G_n)$ is determined by the convergence of its mean and variance. This complements and generalizes the result for $T(K_2, G_n)$, since, in this case, the variance condition $\Var T(K_2, G_n) \rightarrow \lambda$ is automatically implied by the mean condition $\E T(K_2, G_n) \rightarrow \lambda$. Using this result, the limiting distribution of $T(H, G_n)$ in the Erd\H os-R\'enyi random graph is derived, where interesting phase-transitions emerge.

\subsection{The Second Moment Phenomenon} 

Throughout the paper, we will assume that $H$ is a finite, simple, and connected graph with no isolated vertices, and $G_n$ a sequence of growing simple graphs, with the vertices colored uniformly with $c_n$ colors. We will also assume that $c_n \rightarrow \infty$ as $n \rightarrow \infty$, unless specified otherwise.

\begin{thm}\label{thm:poisson} Let $H\ne K_2$ be as above, and $\{G_n\}_{n \geq 1}$ a sequence of graphs colored uniformly with $c_n$ colors, such that 
\begin{align}\label{eq:expvar}
\lim_{n \rightarrow \infty}\E T(H, G_n)=\lambda \quad \text{and}\quad \lim_{n \rightarrow \infty}\Var T(H, G_n)=\lambda.
\end{align}
Then $T(H, G_n) \dto \dPois(\lambda)$. 
\end{thm}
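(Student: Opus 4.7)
The approach is via the method of factorial moments: it suffices to show
\[
\E [T]_r := \E\bigl[T(T-1)\cdots(T-r+1)\bigr] \longrightarrow \lambda^r
\]
for every $r\geq 1$, where $T:=T(H,G_n)=\sum_\alpha Y_\alpha$ with $\alpha$ ranging over unordered copies of $H$ in $G_n$ and $Y_\alpha\in\{0,1\}$ the indicator that $\alpha$ is monochromatic. Then $\E[T]_r=\sum^\star_{(\alpha_1,\ldots,\alpha_r)}\P(Y_{\alpha_1}=\cdots=Y_{\alpha_r}=1)$, the sum being over ordered tuples of distinct copies. The case $r=1$ is the mean hypothesis; the case $r=2$ follows from the identity $\E[T(T-1)]=\Var T+(\E T)^2-\E T\to\lambda+\lambda^2-\lambda=\lambda^2$. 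For $r\geq 3$, I would partition the tuples by the \emph{overlap graph} $\Omega$ on $\{1,\ldots,r\}$, where $i\sim j$ iff $V(\alpha_i)\cap V(\alpha_j)\ne\emptyset$. Because a connected overlap component must be monochromatic of a single color,
\[
\P(Y_{\alpha_1}=\cdots=Y_{\alpha_r}=1)=\prod_{C}c_n^{-(v_C-1)},
\]
the product running over components $C$ of $\Omega$, with $v_C$ the number of distinct vertices used in $C$. The leading term, from edgeless $\Omega$, evaluates to $(\E T)^r+o(1)\to\lambda^r$ modulo a correction of size the pairwise-overlap sum (controlled below).

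The heart of the proof is showing that the contribution from tuples whose overlap graph contains a component of size $\geq 2$ sums to $o(1)$. A crucial observation is that two copies sharing just one vertex are conditionally independent given that vertex's color, so $\mathrm{Cov}(Y_\alpha,Y_\beta)=0$ in that case; the variance hypothesis therefore constrains only \emph{strong} pairwise overlaps (sharing $\geq 2$ vertices). Letting $N_k$ denote the number of ordered pairs of copies of $H$ in $G_n$ sharing exactly $k$ vertices, a direct computation gives
\[
\Var T-\E T=o(1)+\sum_{k\geq 2}N_k\,c_n^{-(2|V(H)|-2)}\bigl(c_n^{k-1}-1\bigr),
\]
and since the summands are non-negative, the hypothesis $\Var T\to\E T$ forces $N_k\,c_n^{-(2|V(H)|-k-1)}=o(1)$ for every $k\geq 2$.

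The main obstacle is to lift this pairwise strong-overlap control to arbitrary higher-order overlap patterns. For each connected overlap component $C$ of size $s\geq 2$ with $v_C$ vertices, define the \emph{excess} $d(C):=s(|V(H)|-1)-(v_C-1)\geq 0$, which records the factor $c_n^{d(C)}$ by which the joint monochromaticity probability of the copies in $C$ exceeds the product of individual probabilities; one has $d(C)\geq 1$ precisely when $C$ creates genuine dependence. A combinatorial case analysis should show that every such $C$ must contain either a strong pairwise overlap or a weak-overlap cycle, so that the count of $C$-configurations in $G_n$ can be uniformly dominated by products of the terms $N_k\,c_n^{-(2|V(H)|-k-1)}$ for $k\geq 2$, each of which is $o(1)$. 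Summing then yields that the total non-disjoint contribution to $\E[T]_r$ is $o(1)$, completing the induction. I expect this last step --- the uniform combinatorial domination of arbitrary overlap components by products of pairwise strong-overlap moments --- to be the delicate technical hurdle, and it is presumably where the paper's broader decomposition of $T(H,G_n)$ into a linear combination of independent Poissons provides the organizing framework.
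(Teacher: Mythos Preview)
Your approach has a genuine gap: under the hypotheses $\E T\to\lambda$ and $\Var T\to\lambda$, the higher factorial moments of $T(H,G_n)$ need not converge at all, so the method of factorial moments cannot succeed as stated. Here is a concrete obstruction for $H=K_3$. Let $G_n$ be the disjoint union of (i) $\lfloor\lambda c_n^2\rfloor$ vertex-disjoint triangles and (ii) a ``book'': two adjacent vertices $a,b$ together with $M:=\lfloor c_n^{3/2}/\log c_n\rfloor$ further vertices, each adjacent to both $a$ and $b$ and to nothing else. Then $N(K_3,G_n)=\lambda c_n^2+M\sim\lambda c_n^2$, so $\E T\to\lambda$; the only $2$-join (diamond) count is $\binom{M}{2}\sim c_n^3/(2\log^2 c_n)=o(c_n^3)$ and there are no distinct triangles on the same vertex set, so $\Var T\to\lambda$. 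However, ordered triples of distinct triangles through the edge $(a,b)$ contribute $M(M-1)(M-2)\,c_n^{-4}\sim c_n^{1/2}/\log^3 c_n\to\infty$ to $\E[T]_3$. Thus $\E[T]_3\to\infty$ even though $T\dto\dPois(\lambda)$ (the book part is $0$ unless $X_a=X_b$, an event of probability $1/c_n$). This shows that your proposed ``uniform combinatorial domination'' of higher-order overlap configurations by products of the pairwise quantities $N_k c_n^{-(2|V(H)|-k-1)}$ is simply false; the variance hypothesis does not control three-at-a-time overlap structures of this kind.

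The paper avoids the moment problem precisely by \emph{not} computing moments of $T$ directly. It introduces a truncation $T=T^+_\varepsilon+T^-_\varepsilon$, where $T^+_\varepsilon$ sums only over tuples $\bm s$ for which the number of copies of $H$ through any $t$-subset of $\bm s$ (for $2\le t\le |V(H)|-1$) is at most $\varepsilon c_n^{|V(H)|-t}$. The variance hypothesis is used only to show $\E T^-_\varepsilon\to 0$ (an $L^1$ statement), while all the higher-moment work is done on $T^+_\varepsilon$, whose per-tuple extension counts are by construction bounded by $\varepsilon c_n^j$; this is exactly what makes the combinatorial domination go through (Lemma~\ref{lm:epscount}). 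In the book example above, the tuples through $(a,b)$ are all discarded into $T^-_\varepsilon$. Your sketch correctly identifies the pairwise consequence of the variance hypothesis and the role of weak-overlap cycles, but the leap to higher moments requires this truncation layer, not a direct bound.
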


Note that Theorem \ref{thm:poisson} assumes that $H\ne K_2$, which corresponds to monochromatic edges. In this case, it is easy to check that  
$$\E(T(K_2, G_n))=\frac{|E(G_n)|}{c_n} \quad \text{and} \quad \Var(T(K_2, G_n))=\frac{|E(G_n)|}{c_n}\Big(1-\frac{1}{c_n}\Big).$$  
Therefore, the assumption $\E(T(K_2, G_n)) \rightarrow \lambda$ automatically ensures that $\Var(T(K_2, G_n)) \rightarrow \lambda$. As a consequence, the variance condition \eqref{eq:expvar} cannot be leveraged, when $H=K_2$, and the proof presented in this paper breaks down. However, as mentioned earlier, the conclusion in Theorem \ref{thm:poisson} still holds when $H=K_2$, that is, $T(K_2, G_n) \dto \dPois(\lambda)$, whenever $\E(T(K_2, G_n)) \rightarrow \lambda$ (refer to \cite[Theorem 5.G]{barbourholstjanson} and \cite[Theorem 1.1]{BDM} for two different proofs of this result). Therefore, {\it the second-moment phenomenon} holds for all connected graph $H$, that is, the limiting Poisson distribution of the $T(H, G_n)$ is determined by the convergence of its first two moments.

The proof of Theorem \ref{thm:poisson} is described in Section \ref{sec:pfpoisson_linear}. In fact, this theorem is a consequence of a more general result (Theorem \ref{thm:poisson_linear}) where we derive a general sufficient condition under which $T(H, G_n)$ is a finite linear combination of independent Poisson random variables.  The proof is based on a truncated moment-comparison technique, and has two main steps:
\begin{itemize}
\item[--] We begin with a truncation step: This involves defining a {\it remainder term}, which (informally) counts the number of tuples $\bm s \in V(G_n)_{|V(H)|}$ such that the number of copies of $H$ passing through a subset of indices in $\bm s$ is `large'. The first step is to show that the remainder term converges to zero in $L_1$, because of the variance assumption in \eqref{eq:expvar} (Lemma \ref{lm:residual}). 

\item[--] To analyze the {\it main term}, which is $T(H, G_n)$ minus the remainder term,  we use the `independent approximation', which shows that the moments of the random variable obtained by replacing the indictors $\bm 1\{X_{=\bm s}\}$ by independent $\dBer(\frac{1}{c_n^{|V(H)|-1}})$ variables, for every subset of vertices in $G_n$ of size $|V(H)|$, are asymptotically  close (Lemma \ref{lm:momentdiff}). The result then follows by deriving the asymptotic distribution of the approximating variable, which is a finite linear combination of independent Bernoulli random variables, each of which converges to a Poisson distribution (Lemma \ref{lm:Wpoisson}). 
\end{itemize}

The truncation step is necessary because, $T(H, G_n)$, for a general graph $H$, does not converge in moments (see Theorem \ref{thm:star} below), and hence, its limiting distribution, cannot be  captured by a direct moment-based argument.

\begin{remark} Another natural approach to proving a limiting Poisson distribution is through the Stein's method for Poisson approximation \cite{poisson2,barbourholstjanson,birthdayexchangeability,CDM}.  In fact, the well-known Stein's method based on dependency graphs \cite[Theorem 15]{CDM}, bounds the convergence rate in terms of covariances  (but, not in terms of the mean and the variance). Arratia et al. \cite{arratia} used this to obtain rates of convergence for the number of monochromatic cliques in a uniform coloring of a complete graph (see also Chatterjee et al. \cite{CDM}). However, this cannot be used to prove Theorem \ref{thm:poisson} for a general graph $H$, as the condition imposed by the convergence of the mean and the variance is, in general, weaker than what is required by a generic dependency graph construction (refer to Remark \ref{rm:trianglestein} for a specific example). Moreover, our general result (Theorem \ref{thm:poisson_linear}) goes beyond the Poisson regime, and captures the asymptotic regime where $T(H,G_n)$ is a finite linear combination of Poisson variables. 
\end{remark}

Next, we consider the converse to Theorem \ref{thm:poisson}, that is, whether the Poisson convergence of $T(H, G_n)$ implies the convergence of the first two moments. The following theorem shows that this is true if and only if $H$ is a {\it star-graph}, that is, $H = K_{1, r}$ for some integer $r \geq 1$.  
 
\begin{thm}\label{thm:star} Fix an integer $r\geq 1$, a real number $\lambda > 0$, and a sequence of graphs $\{G_n\}_{n \geq 1}$ colored uniformly with $c_n$ colors. Then  $T(K_{1, r}, G_n) \dto \dPois(\lambda)$ if and only if 
\begin{align}\label{eq:rcondition}
\lim_{n \rightarrow \infty}\E T(K_{1, r}, G_n)=\lambda \quad \text{and}\quad \lim_{n \rightarrow \infty}\Var T(K_{1, r}, G_n)=\lambda.
\end{align}
Moreover, if $H$ is connected and is not a star-graph, then there exists a sequence of graphs $\{G_n(H)\}_{n \geq 1}$ such that $T(H, G_n(H)) \dto \dPois(\lambda)$, but \eqref{eq:rcondition} does not hold. 
\end{thm}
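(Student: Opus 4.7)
The plan splits into three pieces: sufficiency for stars (immediate from Theorem~\ref{thm:poisson}), necessity for stars, and the counterexample for non-stars.

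For sufficiency, when $r\ge 2$ we have $K_{1,r}\ne K_2$, so \eqref{eq:rcondition} together with Theorem~\ref{thm:poisson} yields $T(K_{1,r},G_n)\dto \dPois(\lambda)$; for $r=1$ this is the classical monochromatic-edge result of Barbour--Holst--Janson. The real work is necessity. I would exploit the explicit decomposition
\[
T(K_{1,r},G_n)\;=\;\sum_{v\in V(G_n)}\binom{N_v}{r},\qquad N_v\;:=\;\sum_{u\in N(v)}\ind\{X_u=X_v\},
\]
in which each $N_v$ is marginally $\mathrm{Bin}(d_v,1/c_n)$. Nonnegativity and Fatou give $\liminf_n \E T\ge \lambda$; the content is the matching upper bound. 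My plan is to show that $T\dto \dPois(\lambda)$ forces $\max_v d_v=o(c_n)$. Indeed, if along a subsequence some vertex $v^*$ had $d_{v^*}/c_n\to \alpha>0$, then $\binom{N_{v^*}}{r}$ would converge in distribution to $\binom{\dPois(\alpha)}{r}$, which for $r\ge 2$ is supported on the gapped set $\{0,1,r+1,\binom{r+2}{r},\ldots\}$ and cannot appear as an independent summand of a Poisson law. An isolation coupling---comparing $T$ with its version on the graph obtained by deleting the edges at $v^*$---makes this summand asymptotically independent of the rest, so the limit of $T$ would be a non-Poisson convolution, a contradiction. Once $\max_v d_v=o(c_n)$ holds, each $\binom{N_v}{r}$ is with high probability a Bernoulli of success probability $\binom{d_v}{r}/c_n^r$; the events $\{N_v\ge r+1\}$ contribute negligibly, so $T$ is close in distribution to a sum of independent Bernoullis of total mean $\E T$, and Poisson convergence of this approximating sum pins $\E T$ to $\lambda$, and then $\Var T$ to $\lambda$. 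The main obstacle is the isolation coupling, which must simultaneously control the perturbation of $N_u$ for every neighbor $u$ of $v^*$.

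For the non-star counterexample, fix a non-star connected $H$ and $\lambda>0$ and take $G_n(H)=G_n^{(1)}\sqcup G_n^{(2)}$ (disjoint union), so that $T(H,G_n(H))=T(H,G_n^{(1)})+T(H,G_n^{(2)})$ with independent summands. Pick $G_n^{(1)}$ via Theorem~\ref{thm:poisson} so that $T(H,G_n^{(1)})\dto \dPois(\lambda)$ with both mean and variance tending to $\lambda$ (e.g., a suitable Erd\H os-R\'enyi instance from the later application in the paper), and engineer $G_n^{(2)}$ so that $T(H,G_n^{(2)})\to 0$ in probability while $\E T(H,G_n^{(2)})\to \mu>0$; then by Slutsky $T(H,G_n(H))\dto \dPois(\lambda)$ while $\E T(H,G_n(H))\to \lambda+\mu\ne \lambda$, violating \eqref{eq:rcondition}. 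For $H=K_3$ take $G_n^{(2)}$ to be the ``book'' with two hub vertices $u,v$ joined by an edge together with $m_n:=\lceil \mu c_n^2\rceil$ common neighbors $w_1,\ldots,w_{m_n}$ each adjacent to both $u$ and $v$; the only triangles are $(u,v,w_i)$, hence
\[
T(K_3,G_n^{(2)})\;=\;\ind\{X_u=X_v\}\cdot|\{i:X_{w_i}=X_u\}|,
\]
giving $\P(T(K_3,G_n^{(2)})\ge 1)\le 1/c_n\to 0$ yet $\E T(K_3,G_n^{(2)})=m_n/c_n^2\to \mu$. For general non-star $H$ an analogous ``trigger+amplification'' construction works: the non-star hypothesis guarantees that $H$ has an edge together with a separate vertex not lying in that edge, so one may take as $G_n^{(2)}$ a graph in which a single ``core'' edge plays the role of a rare monochromatic trigger, amplified by $\Theta(c_n^{|V(H)|-1})$ attached copies of the remaining structure of $H$ so as to force $\E T\to \mu$. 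For stars the same construction fails because every edge is incident to the center, so every monochromaticity condition collapses onto the single color of the center and any inflation of the mean also inflates the distributional limit---which is precisely what the necessity step proves rigorously. The main obstacle is implementing the trigger+amplification for a generic non-star $H$, requiring combinatorial care depending on the shape of $H$.
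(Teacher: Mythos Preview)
Your sufficiency argument matches the paper's. The other two parts diverge, and the necessity argument has a genuine gap.

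\textbf{Necessity for stars.} The paper does not go through $\max_v d_v=o(c_n)$ or any isolation/Raikov step. Instead it proves a purely combinatorial estimate (Lemma~\ref{lm:rstarcount}): for any graph $F$ that is a union of $r$-stars,
\[
N(F,G_n)\;\lesssim_{F,r}\;N(K_{1,r},G_n)^{\frac{|V(F)|-\nu(F)}{r}}.
\]
Once one knows $\E T(K_{1,r},G_n)=N(K_{1,r},G_n)/c_n^r$ is bounded (which follows from a short second-moment argument showing $\E T\to\infty$ forces $T\to\infty$), this inequality bounds \emph{every} moment of $T(K_{1,r},G_n)$, and uniform integrability then transfers the distributional limit $\dPois(\lambda)$ to convergence of all moments, in particular mean and variance. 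Your route instead hinges on the assertion that, after reducing to $\max_v d_v=o(c_n)$, ``$T$ is close in distribution to a sum of independent Bernoullis.'' That is false as stated: the summands $\binom{N_u}{r}$ and $\binom{N_v}{r}$ are genuinely dependent whenever $u\sim v$ or $u,v$ share a neighbor, and nothing in your sketch controls these covariances. To pass from tightness of $T$ to convergence of $\E T$ you need at least a second-moment bound, and bounding $\Var T$ forces you to control counts of $2$-joins of $K_{1,r}$, which is exactly what the paper's lemma supplies. So even if your isolation/Raikov step for $\max_v d_v=o(c_n)$ were made rigorous (you flag it as an obstacle, and it is one: deleting the edges at $v^*$ perturbs $\binom{N_u}{r}$ by $\binom{N_u-1}{r-1}$ for each neighbor $u$, and summing these over $\Theta(c_n)$ neighbors is not obviously negligible), you would still be missing the key moment bound.

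\textbf{Counterexample for non-stars.} Your $K_3$ ``book'' is exactly the paper's construction specialized to $K_3$. For general non-star $H$ the paper gives one uniform gadget, the \emph{pyramid} $\mathcal P_n(H)$: fix a vertex $w\in V(H)$, keep the base $V(H)\setminus\{w\}$ fixed, and glue $n$ copies of $H$ along that base with distinct apexes $z_1,\dots,z_n$. The crucial lemma is that, since the apexes form an independent set and $H$ is not a star, every copy of $H$ in $\mathcal P_n(H)$ must use at least two base vertices; hence $\P(T(H,\mathcal P_n(H))>0)\le \binom{|V(H)|-1}{2}/c_n\to 0$ while $N(H,\mathcal P_n(H))\ge n$, so the mean is bounded below by $1$ at scale $c_n^{|V(H)|-1}=n$. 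Taking the disjoint union with $\lceil\lambda n\rceil$ isolated copies of $H$ gives the Poisson limit. Your ``core edge plus $\Theta(c_n^{|V(H)|-1})$ attached copies of the remaining structure'' is the right intuition but is not yet a construction: you have to guarantee that \emph{every} copy of $H$ in $G_n^{(2)}$ uses both endpoints of the core edge, which is precisely the content of the paper's lemma and does not follow just from ``$H$ has an edge plus a third vertex.'' The pyramid with base of size $|V(H)|-1$ (rather than $2$) is what makes this automatic.
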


The proof of the theorem is given in Section \ref{sec:pfstar}. In fact, the proof shows that when $H$ is a star-graph, we have convergence in all moments, that is, \eqref{eq:rcondition} implies that $T(K_{1, r}, G_n) \rightarrow \dPois(\lambda)$ in distribution and in all moments, and conversely, $T(K_{1, r}, G_n)$ converges in distribution to $\dPois(\lambda)$ implies the convergence of all moments of $T(K_{1, r}, G_n)$ to the corresponding moments of $\dPois(\lambda)$. 

\begin{remark} The second moment phenomenon for the Poisson distribution complements the well-known {\it fourth-moment phenomenon}, which asserts that the limiting normal distribution of certain homogeneous forms is implied by the convergence of the corresponding sequence of fourth moments (refer to Nourdin et al. \cite{greater_three} and the references therein, for general fourth-moment theorems and invariance principles, and Bhattacharya et al. \cite[Theorem 1.3]{BDM} for an example of this phenomenon in random graph coloring). In this regard, it would be interesting to see if the Poisson second-moment phenomenon extends beyond monochromatic subgraphs to general integer-valued homogeneous forms.
\end{remark}

\subsection{Application to Erd\H os-R\'enyi Random Graphs}

Theorem \ref{thm:poisson} can be easily extended to random graphs, when the limits in Theorem \ref{thm:poisson_linear} hold in probability, under the assumption that the graph and its coloring are jointly independent (see Lemma \ref{lm:random} for details). Using this we can derive the limiting distribution of $T(H, G_n)$, where $G_n\sim G(n, p)$ is the Erd\H os-R\'enyi random graph, colored uniformly with $c_n$ colors (independently of the graph), such that \begin{equation}\label{expcopy}
\E T(H,G_n) = \frac{|V(H)|! {n \choose |V(H)|} p^{|E(H)|}}{|Aut(H)|c_n^{|V(H)|-1}} \rightarrow \lambda.
\end{equation}    
This implies $c_n=\Theta(n^{\frac{|V(H)|}{|V(H)|-1}} p^{\frac{|E(H)|}{|V(H)|-1}})$. Also the condition $c_n\rightarrow\infty$ implies $n^{\frac{|V(H)|}{|E(H)|}}p\rightarrow\infty$.

Under the above scaling, Theorem \ref{thm:poisson_linear} can be used to characterize the limiting distribution of $T(H, G_n)$ for all connected graphs $H$, where $G_n\sim G(n, p)$ and $p=p(n) \in (0, 1)$. Here multiple interesting phase transitions occur depending on whether the graph $H$ is balanced or unbalanced. We begin by recalling the notion of balancedness of a graph. 

\begin{defn}\cite[Chapter 3]{random_graphs_janson} For a finite connected graph $H$, define 
\begin{align}\label{eq:mH}
m(H)=\max_{H_1\subseteq H}\frac{|E(H_1)|}{|V(H_1)|},
\end{align}
where the maximum is over all non-empty subgraphs $H_1$ of $H$. The graph $H$ is said to be {\it balanced}, if $m(H)=\frac{|E(H)|}{|V(H)|}$, and {\it unbalanced} otherwise. Moreover, the graph $H$ is said to be {\it strictly balanced} if $\frac{|E(H')|}{|V(H')|} < \frac{|E(H)|}{|V(H)|}=m(H)$, for all proper subgraphs $H'$ of $H$. 
\end{defn}

In the balanced case, where the asymptotic distribution of $T(H,G_n)$ undergoes a phase transition from $\dPois(\lambda)$ to a linear combination of independent Poissons, depending on whether $p(n) \rightarrow 0$ or $p(n):=p$ is fixed, respectively. 

\begin{thm}\label{thm:balanced} (Balanced Graphs) Let $H$ be a simple connected balanced graph, and $G_n\sim G(n,p)$ be the Erd\H os-R\'enyi random graph, with $p:=p(n) \in (0, 1)$, colored uniformly with $c_n$ colors such that \eqref{expcopy} holds. Then the following cases arise: 
\begin{enumerate}

\item[(a)] If $n^{-\frac{|V(H)|}{|E(H)|}}\ll p(n)\ll 1$, then $T(H,G_n)
 \dto \dPois(\lambda)$.

\item[(b)] If $p(n) := p \in (0, 1)$ is fixed, 
\begin{align}\label{eq:pdense}
T(H, G_n) \dto \sum_{F \supseteq H: |V(F)|=|V(H)|} N(H, F) X_F,
\end{align} 
where $X_F\sim \dPois\left(\lambda\cdot \frac{|Aut(H)|}{|Aut(F)|}p^{|E(F)|-|E(H)|}(1-p)^{{|V(H)| \choose 2}-|E(F)|}\right)$ and the collection $\{X_F: F \supseteq H \text{ and } |V(F)|=|V(H)|\}$ is independent. 
\end{enumerate}
\end{thm}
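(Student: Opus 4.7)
The plan is to derive part (a) from Theorem \ref{thm:poisson} and part (b) from the more general Theorem \ref{thm:poisson_linear}, in both cases via the random-graph extension Lemma \ref{lm:random}, which reduces the task to verifying that the relevant conditional moments of $T(H, G_n)$ given $G_n$ converge in probability. Write $v = |V(H)|$ and $e = |E(H)|$; the scaling \eqref{expcopy} forces $c_n^{v-1} = \Theta(n^v p^e)$.

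For part (a), I would apply Theorem \ref{thm:poisson} conditionally on $G_n$, so it suffices to show that $\E[T(H, G_n) \mid G_n] \to \lambda$ and $\Var[T(H, G_n) \mid G_n] \to \lambda$ in probability. The conditional mean equals $N(H, G_n)/c_n^{v-1}$, and for balanced $H$ with $p \gg n^{-v/e}$ the standard Chebyshev estimate for subgraph counts gives $N(H, G_n)/\E N(H, G_n) \to 1$ in probability, so the mean converges. For the conditional variance, expand $T(H, G_n)$ as a sum over ordered tuples and decompose by the overlap size $k = |\bm s \cap \bm t|$: the $k=0$ pairs give zero by color-independence, the $k=1$ pairs also give zero because the joint monochromaticity probability $c_n^{-(2v-2)}$ coincides with the product of marginals $c_n^{-(v-1)} \cdot c_n^{-(v-1)}$, and the $k=v$ pairs reproduce $\E[T \mid G_n]$. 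The remaining terms with $2 \le k \le v-1$ have expected size
\[
\frac{n^{2v-k} p^{2e-b}}{c_n^{2v-k-1}} \;=\; \Theta\bigl(n^{-(v-k)/(v-1)}\, p^{[e(k-1)-b(v-1)]/(v-1)}\bigr),
\]
where $b = |E(H_1)|$ for some subgraph $H_1 \subseteq H$ on $k$ vertices. Balancedness bounds $b \le ek/v$, and the extremal choice $b = ek/v$ reduces the above to $n^{-(v-k)/(v-1)} p^{-e(v-k)/[v(v-1)]}$, which tends to zero precisely under $p \gg n^{-v/e}$. A Markov bound then delivers the variance convergence.

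For part (b), with $p$ fixed, I would exploit the dense regime via the decomposition
\[
T(H, G_n) \;=\; \sum_{F \supseteq H,\; |V(F)|=v} N(H, F)\, Y_F,
\]
where $Y_F$ counts the $v$-subsets $S \subseteq V(G_n)$ with $G_n[S] \cong F$ and $S$ monochromatic; this identity holds because each copy of $H$ in $G_n$ sits inside the induced subgraph on its $v$-vertex support, whose isomorphism type $F \supseteq H$ contains $N(H, F)$ copies of $H$. A direct computation using $\P(G_n[S] \cong F) = \frac{v!}{|Aut(F)|} p^{|E(F)|}(1-p)^{\binom{v}{2}-|E(F)|}$ together with \eqref{expcopy} shows that $\E Y_F$ equals precisely the Poisson parameter stated for $X_F$ in \eqref{eq:pdense}. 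I would then apply Theorem \ref{thm:poisson_linear} to the indicator decomposition of $T(H, G_n)$ grouped by $F$; its mean and covariance hypotheses reduce to bounding, for each $1 \le k \le v-1$, the expected number of overlapping pairs of monochromatic induced $v$-subsets, which is $\Theta(n^{2v-k} c_n^{-(2v-k-1)}) = \Theta(n^{-(v-k)/(v-1)})$ using $c_n = \Theta(n^{v/(v-1)})$, and hence vanishes. The continuous mapping theorem then yields the stated linear combination of independent Poissons.

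The main obstacle is the variance computation in part (a): one must show that the worst overlap exponent, taken over all subgraphs $H_1 \subseteq H$, is attained at the balancedness-saturated ratio $b/k = e/v$ and produces the tight threshold $p \gg n^{-v/e}$. This uses balancedness in an essential way and requires careful bookkeeping to separate the genuinely contributing overlaps from the trivial ones. In part (b), the subtler point is establishing joint (not merely marginal) independent Poisson convergence of the $Y_F$'s across distinct $F$, but this follows from the same overlap calculation together with the observation that two distinct induced isomorphism types cannot coexist on the same $v$-set, so cross-$F$ covariances are controlled by the same overlap-size bounds.
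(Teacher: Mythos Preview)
Your overall plan matches the paper's: part (a) goes through Lemma \ref{lm:random} and Theorem \ref{thm:poisson} by checking the conditional variance, and part (b) goes through Theorem \ref{thm:poisson_linear} (the paper phrases this via Remark \ref{rm:dense} and a.s.\ graphon convergence of $G(n,p)$ to $W\equiv p$, but the content is the same).

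There is, however, a real gap in your variance analysis for part (a). Your assertion that ``the $k=v$ pairs reproduce $\E[T\mid G_n]$'' is only true when every $v$-subset of $V(G_n)$ carries at most one copy of $H$. In general the $k=v$ contribution to the conditional variance equals
\[
\frac{1+o(1)}{c_n^{v-1}}\sum_{|S|=v} N(H,G_n[S])^2
\;=\;\E[T\mid G_n]\;+\;\frac{1+o(1)}{c_n^{v-1}}\sum_{|S|=v} N(H,G_n[S])\bigl(N(H,G_n[S])-1\bigr),
\]
and the second sum counts ordered pairs of \emph{distinct} copies of $H$ on a common vertex set. Your overlap bound for $2\le k\le v-1$ does not touch this, and your balancedness extremization breaks here: plugging $k=v$ into your formula and taking the worst case $b=ek/v=e$ gives exponent zero in both $n$ and $p$, so nothing vanishes. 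The missing observation is that two distinct copies of $H$ on the same vertex set force the induced graph to contain some $F\supsetneq H$ with $|V(F)|=v$ and $|E(F)|>e$, whence the expected count is $O(n^v p^{|E(F)|})=O(c_n^{v-1})\cdot p^{|E(F)|-e}=o(c_n^{v-1})$; here it is $p\to 0$, not balancedness, that does the work. This is exactly the case $|V(H_1)|=|V(H)|$ singled out in the paper's proof, and it is not optional: for fixed $p$ this term does \emph{not} vanish, which is precisely why the limit in (b) is a nontrivial linear combination of Poissons rather than a single Poisson.

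For part (b), one small point: Lemma \ref{lm:random} is stated for Theorem \ref{thm:poisson}, not Theorem \ref{thm:poisson_linear}, so you also need convergence in probability of $N_{\mathrm{ind}}(F,G_n)/c_n^{v-1}$ for each fixed $F\supseteq H$ with $|V(F)|=v$. This follows from standard concentration of dense subgraph counts (equivalently, a.s.\ convergence of $G(n,p)$ to the constant graphon), after which the same subsequence argument as in Lemma \ref{lm:random} lets you apply Theorem \ref{thm:poisson_linear} conditionally. The ``continuous mapping theorem'' is not needed; the linear-combination limit is the direct output of Theorem \ref{thm:poisson_linear}.
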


Note that the sum  in \eqref{eq:pdense} above  is over the set of non-isomorphic (unlabelled) graphs $F$, which contain $H$ as a subgraph and has the same number of vertices as $H$. The proof of Theorem \ref{thm:balanced} is given in Section \ref{sec:erpf}. 

The situation, however, is more delicate for unbalanced graphs. To explain this, we need the following definition:  

\begin{defn}
For an unbalanced graph $H$, define the exponent 
\begin{align}\label{eq:expp}
\gamma(H)&:=\min_{H_1\subset H}\frac{|V(H)|-|V(H_1)|}{|E(H_1)|(|V(H)|-1)-|E(H)|(|V(H_1)|-1)}, 
\end{align}
where the minimum is over the set of all proper subgraphs $H_1$ of $H$, for which the denominator is positive. 
\end{defn}

\begin{figure}[h]
\centering
\begin{minipage}[l]{1.0\textwidth}
\centering
\includegraphics[width=6.25in]
    {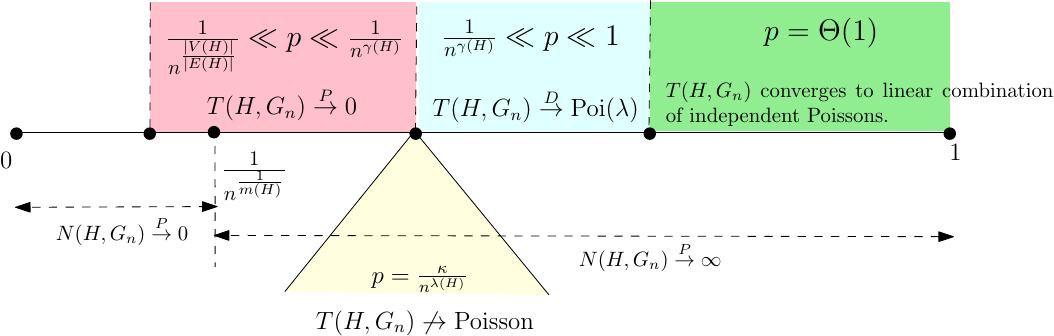}\\
\end{minipage} 
\caption{\small{Phase transitions of $T(H, G_n)$, for an unbalanced graph $H$ in the Erd\H os-R\' enyi random graph $G_n\sim G(n, p)$, as $p$ varies from 0 to 1.}}
\label{fig:erdos_renyi}
\end{figure}

It is easy to verify that $\gamma(H)$ is well-defined and positive, for any unbalanced graph $H$ (see Lemma \ref{lm:egamma}).\footnote{Even though, for our results, we only need to define $\gamma(H)$ for unbalanced graphs, it is natural to wonder what happens to the quantity in the RHS of \eqref{eq:expp} for balanced graphs. We show in Lemma \ref{lm:egamma}, for $H$ balanced, but not strictly balanced, $\gamma(H)$ as in \eqref{eq:expp}, is well-defined and equals to $\frac{1}{m(H)}$. On the other hand, if $H$ is strictly balanced, there are cases where the RHS of \eqref{eq:expp} is finite, and there are cases where it is undefined. } When $H$ is unbalanced, the asymptotic distribution of $T(H, G_n)$, where $G_n\sim G(n, p(n))$, undergoes an additional phase-transition, whose location is determined by the exponent $\gamma(H)$.

\begin{thm}(Unbalanced Graphs)\label{thm:unbalanced}
 Let $H$ be a simple connected unbalanced graph, and $G_n\sim G(n,p)$ be the Erd\H os-R\'enyi random graph, with $p:=p(n) \in (0, 1)$, colored uniformly with $c_n$ colors,  such that \eqref{expcopy} holds. Then the following cases arise: 

\begin{enumerate}
\item[(a)] If $n^{-\frac{|V(H)|}{|E(H)|}} \ll p(n) \ll n^{-\gamma(H)}$,  then $T(H, G_n)\pto 0$. 

\item[(b)] If $n^{\gamma(H)}p(n)\rightarrow \kappa\in (0,\infty)$ then all moments of $T(H, G_n)$ converges. Moreover,  if $T(H, G_n)$ converges in distribution to a random variable $W$, then $W$ is not Poisson.

\item[(c)] If $n^{-\gamma(H)} \ll p(n)\ll 1$, then $T(H,G_n) \dto \dPois(\lambda)$.

\item[(d)] If $p(n) := p \in (0, 1)$ is fixed, then $T(H, G_n)$ converges to the RHS of \eqref{eq:pdense}, that is, a linear combination of independent Poisson random variables. 
\end{enumerate}
\end{thm}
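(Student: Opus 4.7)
The plan is to reduce all four regimes to the general result Theorem~\ref{thm:poisson_linear} (applied in its random-graph form, Lemma~\ref{lm:random}), with the single quantitative input being the variance contribution of each overlap pattern. Writing $v=|V(H)|$, $e=|E(H)|$, $v_1=|V(H_1)|$, $e_1=|E(H_1)|$, the scaling (\ref{expcopy}) forces $c_n^{v-1}\asymp n^{v} p^{e}$, and a direct calculation shows the contribution of a proper subgraph $H_1 \subsetneq H$ to $\Var T(H,G_n)$ is of order
\begin{align*}
n^{2v-v_1}\,p^{2e-e_1}\,c_n^{-(2v-v_1-1)} \;\asymp\; n^{-(v-v_1)/(v-1)}\,p^{-[e_1(v-1)-e(v_1-1)]/(v-1)},
\end{align*}
with the exponent of $p$ non-positive precisely when the denominator in (\ref{eq:expp}) is positive. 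This term vanishes iff $p\gg n^{-(v-v_1)/[e_1(v-1)-e(v_1-1)]}$, and $\gamma(H)$ is the smallest of these exponents, so $p\gg n^{-\gamma(H)}$ is the sharp condition for every proper-overlap contribution to be negligible.

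Parts (c) and (d) then follow from the same template as Theorem~\ref{thm:balanced}. For part (c), one checks that conditionally on $G_n$ the moments satisfy $\E[T(H,G_n)\mid G_n]\pto \lambda$ and $\Var[T(H,G_n)\mid G_n]\pto \lambda$, using concentration of the subgraph counts $N_{H_1}(G_n)$ in $G(n,p)$ (Chebyshev/Janson suffice in this range of $p$), after which Lemma~\ref{lm:random} delivers $T(H,G_n)\dto \dPois(\lambda)$. For part (d), with $p$ fixed, all proper-overlap variance contributions are bounded uniformly in $n$, but the genuine limit comes from induced supergraphs $F\supseteq H$ on the same vertex set; the combinatorial decomposition is identical to the balanced fixed-$p$ case, with each $F$ weighted by $p^{|E(F)|-|E(H)|}(1-p)^{{|V(H)| \choose 2}-|E(F)|}$, yielding the same linear combination of independent Poissons as in (\ref{eq:pdense}).

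For part (a), the key observation is that any monochromatic copy of $H$ in $G_n$ restricts to a monochromatic copy of every subgraph $H_1\subseteq H$. Letting $H_1^*$ be a proper subgraph that achieves the minimum in (\ref{eq:expp}), a calculation parallel to the variance estimate gives
\begin{align*}
\E T(H_1^*,G_n) \;\asymp\; n^{(v-v_1^*)/(v-1)}\,p^{[e_1^*(v-1)-e(v_1^*-1)]/(v-1)},
\end{align*}
which tends to $0$ precisely when $p\ll n^{-\gamma(H)}$. Markov's inequality then yields $\Pr(T(H,G_n)\geq 1)\leq \Pr(T(H_1^*,G_n)\geq 1)\leq \E T(H_1^*,G_n)\to 0$, proving (a). For part (b), the critical scaling $n^{\gamma(H)}p\to\kappa$ makes $\E T(H_1^*,G_n)$ converge to a positive constant, so the $H_1^*$-overlap contribution to $\Var T(H,G_n)$ survives in the limit and $\liminf \Var T(H,G_n)>\lambda$; any subsequential distributional limit $W$ thus has variance strictly larger than its mean, so $W$ cannot be Poisson. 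Moment convergence in (b) would be obtained by a direct moment computation, enumerating overlap patterns of $k$ copies of $H$ and verifying that each contributes either a bounded, explicit limit (governed by powers of $\kappa$ and $\lambda$) or is negligible, in the spirit of the independent-approximation step of Theorem~\ref{thm:poisson_linear} but without the $L^1$ truncation.

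The main obstacle is part (b): the truncation and independent-approximation machinery behind Theorem~\ref{thm:poisson_linear} is engineered so that every proper-overlap contribution vanishes, which is exactly what now fails for $H_1^*$. To prove convergence of all moments one must, for every $k\geq 1$, classify the overlap patterns of $k$ copies of $H$, isolate the ``critical'' patterns (those whose skeleton contains repeated copies of $H_1^*$) and show they produce explicit limits, while all other patterns vanish. The delicate step is a refined truncation that separates rather than eliminates the critical overlaps, together with uniform control over contributions of arbitrary multiplicity; verifying that the surviving structure is compatible with the Bernoulli-replacement step is what makes the bookkeeping substantially more involved than in the Poisson case.
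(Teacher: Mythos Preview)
Your outline for parts (a), (c), (d), and the non-Poisson clause of (b) matches the paper's approach essentially line for line: the minimizer $H_1^*$ governs part~(a) via Markov on $T(H_1^*,G_n)$; Lemma~\ref{lm:random} plus concentration of subgraph counts handles~(c); the graphon/induced-supergraph decomposition (Remark~\ref{rm:dense}) handles~(d); and the surviving $H_1^*$-overlap term in $R_{2,n}$ forces $\liminf\Var T(H,G_n)>\lambda$, so any limit $W$ (once moments transfer) has $\Var W>\E W$ and cannot be Poisson. Your variance-contribution formula and its interpretation in terms of $\gamma(H)$ are exactly what the paper uses.

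Where you diverge is in your assessment of the moment-convergence part of~(b). You anticipate needing a ``refined truncation that separates rather than eliminates the critical overlaps'' and worry about compatibility with the Bernoulli-replacement step. The paper does \emph{none} of this: it expands $\E T(H,G_n)^r$ directly as a finite sum $\sum_{F\in\sG_r(H)} c(F,H)\,\eta_n(F)$ with $\eta_n(F)=n^{|V(F)|}p^{|E(F)|}/c_n^{|V(F)|-\nu(F)}$, and proves (Lemma~\ref{lm:mconv}) by induction on $r$ that each $\eta_n(F)$ converges. The inductive step is a one-line factorization: writing a connected $F_r\in\sG_r(H)$ as the join of $F_{r-1}\in\sG_{r-1}(H)$ and a copy of $H$ overlapping in some $H_1$, one gets
\[
\eta_n(F_r)=\eta_n(F_{r-1})\cdot \frac{n^{v-v_1}p^{e-e_1}}{c_n^{v-v_1}},
\]
and the second factor is exactly the quantity you already computed---it converges to a positive power of $\kappa$ when $H_1$ is a minimizer of \eqref{eq:expp} and to $0$ otherwise. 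No truncation, no independent approximation, no classification of ``critical patterns'' beyond this single dichotomy. So your perceived obstacle is not there; the direct moment computation you sketched in the previous paragraph is already the complete argument, and the machinery of Theorem~\ref{thm:poisson_linear} plays no role in part~(b).
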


The proof of Theorem \ref{thm:unbalanced} is given in Section \ref{sec:erpf}. The phase transitions of $T(H, G_n)$, for an unbalanced graph $H$, are shown in Figure \ref{fig:erdos_renyi}.

\begin{remark} It is well-known that $n^{-\frac{1}{m(H)}}$ is the threshold for the occurrence of $H$ in the random graph $G(n, p)$ \cite[Theorem 3.4]{random_graphs_janson}. Therefore, for unbalanced graphs, since $\gamma(H) < \frac{1}{m(H)}$ (Lemma \ref{lm:egamma}), there exists an interesting regime ($n^{-\frac{1}{m(H)}} \ll p \ll n^{-\gamma(H)}$) where $N(H, G_n)$, the number of copies of $H$ in $G_n$, goes to infinity, but the number of monochromatic copies $T(H, G_n)$ converges in probability to zero, that is, we do not  have convergence of moments. Another surprising feature of unbalanced graphs is that the asymptotic distribution of $T(H, G_n)$ transitions from being degenerate at zero (equivalently, $\dPois(0)$) to $\dPois(\lambda)$, through a non-Poisson limit at the point of criticality ($p=\frac{\kappa}{n^{\gamma(H)}}$).  It remains open to show that the limit of $T(H, G_n)$ exists at the critical point, and finding the limiting distribution? Preliminary calculations in a few examples seem to suggest that the limiting moments may not satisfy Stieltjes moment condition \cite{Stieltjes}, and so we cannot conclude  existence of limiting distribution from the convergence of moments. 
\end{remark}

\subsection{Organization} The rest of the paper is organized as follows: The general limiting distribution of monochromatic subgraphs and the proof of Theorem \ref{thm:poisson} are given in Section \ref{sec:pfpoisson_linear}. The proof of Theorem \ref{thm:star} is given in Section \ref{sec:pfstar}. Applications to the Erd\H os-R\'enyi random graph (proofs of Theorem \ref{thm:balanced} and Theorem \ref{thm:unbalanced}) and the birthday problem are discussed in Section \ref{sec:examples}.


\section{Limiting Distribution of Monochromatic Subgraphs}
\label{sec:pfpoisson_linear}

In this section we derive general sufficient conditions under which the random variable $T(H, G_n)$ converges to a linear combination of independent Poisson random variables.  We begin with a few definitions and notations: For a finite simple unlabeled graph $F$, denote by $\hom_{\mathrm{inj}}(F, G_n)$ the set of injective homomorphisms from $F$ to $G_n$, that is, the set of injective maps $\phi: V (F) \rightarrow V (G_n)$, such that $(\phi(x), \phi(y)) \in E(G_n)$ whenever $(x, y) \in E(F)$. It is easy to see that $$
|\hom_{\mathrm{inj}}(H, G_n)|=\sum_{\bm s \in V(G_n)_{|V(H)|}} \prod_{(a,b) \in E(H)}a_{s_a s_b}(G_n).$$  Moreover, denote by $N(F, G_n)$ the number of copies of $F$ in $G_n$, and $N_{\mathrm{ind}}(F, G_n)$ the number of induced copies of $F$ in $G_n$. Note that 
\begin{align}\label{eq:poissonexp}
N(H, G_n)=\frac{|\hom_{\mathrm{inj}}(H, G_n)|}{|Aut(H)|}\quad \text{and} \quad \E(T(H, G_n)) =\frac{N(H, G_n)}{c_n^{|V(H)|-1}}.
\end{align}

Next, we introduce the notion of join of two graphs. These graphs will show up in the analysis of the variance of $T(H, G_n)$. 

\begin{defn}\label{defn:tjoin} Fix $t \in [1, |V(H)|]$. Let $H'$ be an isomorphic copy of $H$, with  $V(H)=\{1, 2, \ldots, |V(H)|\}$ and $V(H')=\{1', 2', \ldots, |V(H)|'\}$, where $z' \in V(H')$ is the image of $z \in V (H)$.  For two ordered index sets $J_1=(j_{11}, j_{12}, \ldots, j_{1t}) \in [|V(H)|]_t$ and $J_2=(j_{21}, j_{22}, \ldots, j_{2t}) \in [|V(H)|]_t$, denote by $H_t(J_1, J_2)$ the simple graph obtained by the union of $H$ and $H'$, when the vertex $j_{1a} \in V(H)$ is identified with the vertex $j_{2a}' \in V(H')$, for $a \in [t]$. More precisely,
$$H_t(J_1, J_2)=\left(V(H) \bigcup \gamma(V(H')), E(H) \bigcup \gamma(E(H'))\right),$$ 
where
\begin{itemize}

\item[--] $\gamma(V(H'))=\{\gamma(v'): v' \in V(H')\}$, where  $\gamma$ is a relabelling of the vertices of $V(H')$ such that $\gamma(j_{2a}')=j_{1a}$, for $a \in [t]$, and $\gamma(v')=v'$, for $v' \notin J_2$. 

\item[--] This induces a relabelling of the edges $\gamma(E(H'))=\{\gamma((u', v')): (u', v') \in E(H')\}$, where $\gamma((u', v'))=(\gamma(u'), \gamma(v'))$, for $(u', v') \in E(H')$.

\end{itemize}
{\it The graph $H_t(J_1, J_2)$ will be referred to as the $t$-join of $H$ with pivots at $J_1$ and $J_2$} (see Figure \ref{fig:Hdefn}). Denote by $\sJ_t(H):=\{H_t(J_1, J_2): J_1, J_2 \in [|V(H)|]_t\}$ the collection of all graphs (up to isomorphism) which can be obtained as the $t$-join of $H$. Finally, a graph $F$ is said to be a {\it join} of two isomorphic copies of $H$, if $F \in \sJ_t(H)$, for some $t \in [1, |V(H)|]$. 
\end{defn}

\begin{figure*}[h]
\centering
\begin{minipage}[l]{1.0\textwidth}
\centering
\includegraphics[width=5.55in]
    {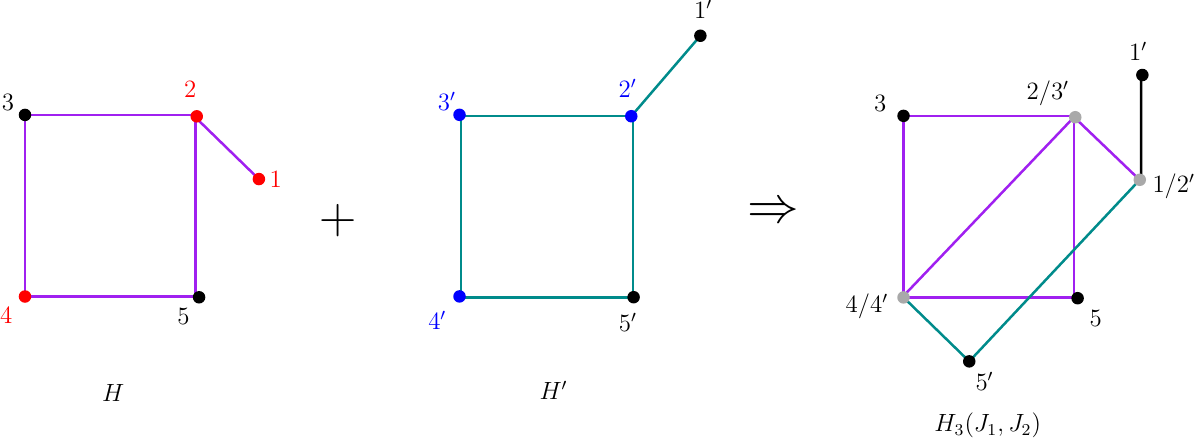}\\
\end{minipage} 
\caption{\small{3-join of $H$ (the 4-cycle with an edge hanging from one vertex of the cycle) with pivots $J_1=(1, 2, 4)$ and $J_2=(2, 3,4)$.}}
\label{fig:Hdefn}
\end{figure*}

Equipped with the above definitions, we can now state our general theorem:

\begin{thm}\label{thm:poisson_linear} Let $H$ be as in Theorem \ref{thm:poisson}, and $G_n$ be a sequence of  graphs colored uniformly with $c_n$ colors, such that the following hold:

\begin{itemize}

\item[--] For every $k \in [1,N(H,K_{|V(H)})]$, there exists $\lambda_k \geq 0$ such that
\begin{equation}\label{eq:Finduced}
	\lim_{n \rightarrow \infty}  \frac{\sum_{F\in \sC_{H,k}} N_{\mathrm{ind}}(F,G_n)}{c_n^{|V(H)|-1}} = \lambda_k ,
\end{equation}
where $\sC_{H,k} := \{F \supseteq H: |V(F)|=|V(H)| \text{ and } N(H,F)=k\}$.\footnote{Note that the graphs in the set $\sC_{H,k}$ are unlabelled. In other words, $\sC_{H,k}$ is the collection of non-isomorphic graphs with the same number of vertices as $H$ and containing $k$ copies of $H$.}

\item[--] For $t \in [2, |V(H)|-1]$ and every $F \in \sJ_t(H)$, as  $n \rightarrow \infty$, 
$N(F, G_n)=o(c_n^{2|V(H)|-t-1}).$ 
\end{itemize}
Then 
\begin{align}\label{eq:Tlinear}
T(H, G_n) \dto \sum_{k=1}^{N(H,K_{|V(H)|})} k Z_k,
\end{align}
where $Z_k\sim \dPois(\lambda_k)$ and the collection $\{Z_k: 1\leq k\leq N(H,K_{|V(H)|})\}$ is independent. 
\end{thm}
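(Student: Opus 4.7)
The plan is to establish joint convergence of a finite family of counts to a vector of independent Poissons via the method of factorial moments, from which \eqref{eq:Tlinear} follows by the continuous mapping theorem. The first step regroups $T(H, G_n)$ by unordered vertex-subsets. Since $\bm 1\{X_{=\bm s}\}$ depends only on the underlying set $S = \{s_1, \ldots, s_{|V(H)|}\}$, and summing the product of adjacency entries over orderings of $S$ yields $|Aut(H)| \cdot N(H, G_n[S])$, one obtains
\begin{align*}
T(H, G_n) \;=\; \sum_{S \subseteq V(G_n),\,|S|=|V(H)|} N(H, G_n[S])\, \bm 1\{X_{=S}\} \;=\; \sum_{k=1}^{N(H, K_{|V(H)|})} k\, W_k,
\end{align*}
where $W_k := \sum_{S : |S|=|V(H)|,\, N(H, G_n[S]) = k} \bm 1\{X_{=S}\}$. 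Since $\#\{S : N(H, G_n[S]) = k\} = \sum_{F \in \sC_{H,k}} N_{\mathrm{ind}}(F, G_n)$, hypothesis \eqref{eq:Finduced} gives $\E W_k \to \lambda_k$, matching the mean of $X_k \sim \dPois(\lambda_k)$.

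The next step is to prove joint convergence $(W_1, \ldots, W_N) \dto (X_1, \ldots, X_N)$ with $N := N(H, K_{|V(H)|})$ and $X_1, \ldots, X_N$ independent Poissons with the above means. Since each $W_k$ is tight (as $\E W_k$ is bounded) and the joint law of independent Poissons is determined by moments, it suffices to check
\begin{align*}
\E \prod_{k=1}^N (W_k)_{r_k} \;\longrightarrow\; \prod_{k=1}^N \lambda_k^{r_k}, \qquad (r_1, \ldots, r_N) \in \mathbb{Z}_{\ge 0}^N.
\end{align*}
Expanding the left side gives a sum over ordered tuples $(S^{(k,j)})_{k \in [N],\, j \in [r_k]}$ of pairwise distinct $|V(H)|$-subsets with $N(H, G_n[S^{(k,j)}]) = k$, weighted by the joint probability $c_n^{-(|U|-c)}$, where $U$ is the union of the subsets in the tuple and $c$ is the number of components of the graph on these subsets whose edges connect overlapping pairs.

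The analysis then splits by the intersection pattern. For pairwise disjoint tuples one has $|U|-c = r(|V(H)|-1)$ with $r = \sum_k r_k$, and the count factorizes asymptotically across $k$ as $\prod_k\#\{S : N(H, G_n[S]) = k\}^{r_k}$; by \eqref{eq:Finduced} this contribution tends to $\prod_k \lambda_k^{r_k}$, the desired limit. For any configuration containing a pair $(S^{(k,j)}, S^{(k',j')})$ with intersection size $t \in [2, |V(H)|-1]$, the subgraph of $G_n$ on $S^{(k,j)} \cup S^{(k',j')}$ contains some $t$-join $F \in \sJ_t(H)$, so the number of such overlapping pairs is bounded by $\sum_{F \in \sJ_t(H)} N(F, G_n) = o(c_n^{2|V(H)|-t-1})$ by the second hypothesis, which precisely cancels the joint-probability factor $c_n^{-(2|V(H)|-t-1)}$ and drives the contribution to zero. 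Multi-way overlaps among three or more subsets yield analogous savings by induction on the number of overlapping pairs.

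The main obstacle is handling pairwise intersections of size $t = 1$, which the stated hypotheses do not control directly. The proposed resolution is a truncation in the spirit of Lemma \ref{lm:residual}: subtract from each $W_k$ a remainder term that removes subsets $S$ containing a vertex $v$ through which an atypically large number of induced $|V(H)|$-subgraphs of type in $\sC_{H,k}$ pass. The removed remainder is shown to be $o_{L_1}(1)$ using \eqref{eq:Finduced}, while in the surviving truncated sum the number of pairs $(S, S')$ with $|S \cap S'| = 1$ is controlled by a vertex-degree bound of the form $\max_v N_v(H, G_n) \cdot N(H, G_n) = o(c_n^{2|V(H)|-2})$, which again cancels the joint-probability factor $c_n^{-(2|V(H)|-2)}$. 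With this in hand, Lemma \ref{lm:momentdiff} replaces the indicators $\bm 1\{X_{=S}\}$ by independent $\dBer(c_n^{-(|V(H)|-1)})$ variables at negligible cost to the moments, and Lemma \ref{lm:Wpoisson} supplies the elementary Poisson limit of each component, completing \eqref{eq:Tlinear}.
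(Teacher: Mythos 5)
Your decomposition $T(H,G_n)=\sum_k k\,W_k$ and the observation that overlaps of size $t\in[2,|V(H)|-1]$ are killed by the $\sJ_t(H)$-hypothesis are both correct, and you correctly identify that the method-of-moments argument is obstructed by pairwise overlaps of size $1$ (more precisely, by \emph{cycles} of such overlaps, which inflate the joint probability by a power of $c_n$). However, the truncation you propose to handle this — removing sets $S$ that contain a vertex $v$ with an atypically large number of copies of $H$ through it — is not achievable from the stated hypotheses, and this is a genuine gap. The hypotheses control $\sum_{F\in\sC_{H,k}} N_{\mathrm{ind}}(F,G_n)$ and $N(F,G_n)$ for $t$-joins with $t\ge 2$, but \emph{not} the per-vertex count $\max_v N_v(H,G_n)$. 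Take $H=K_3$ and let $G_n$ be a ``friendship graph'': $c_n^2$ triangles all sharing a single hub vertex $v$ and otherwise pairwise disjoint, with $c_n\to\infty$. Then condition \eqref{eq:Finduced} holds with $\lambda_1=1$ and all other $\lambda_k=0$, and the second hypothesis holds trivially since no two triangles share $\ge 2$ vertices; yet every copy of $K_3$ passes through $v$, so the remainder from your proposed truncation is all of $T(K_3,G_n)$, which does \emph{not} converge to $0$ in $L^1$. Your subsequent claim that the remainder is $o_{L^1}(1)$ ``using \eqref{eq:Finduced}'' is therefore false, and your gloss that ``multi-way overlaps \ldots{} yield analogous savings by induction on the number of overlapping pairs'' does not explain how to handle, say, three sets $S_1,S_2,S_3$ each pair of which overlaps in exactly one (distinct) vertex, where the joint probability exceeds the independent product by a factor of $c_n$.

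The paper's fix is different and is the heart of the argument. The truncation set $\mathcal{A}_\varepsilon(H,G_n)$ in \eqref{eq:truncationset} removes a tuple $\bm s$ whenever some $M_{J_1}(\bm s_{J_2},H,G_n)$ with $|J_1|=|J_2|=t\in[2,|V(H)|-1]$ is large; this remainder \emph{is} controlled by the $\sJ_t(H)$ hypothesis via Lemma \ref{lm:residual}. Then the crucial ingredient you are missing is the ordering lemma (Lemma \ref{connected}): for any connected configuration on ``too few'' vertices ($|V(F)|< b|V(H)|-b+1$), one can reorder the $H$-copies so that some copy meets the union of the previously placed copies in between $2$ and $|V(H)|-1$ vertices. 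This converts cycles of size-$1$ pairwise overlaps into a single $t\ge 2$ overlap with the accumulated set, at which point the $\mathcal{A}_\varepsilon$ truncation supplies the $\varepsilon$-saving (Lemma \ref{lm:epscount}). Your proposal would need to be rebuilt around a truncation on partial overlaps of each new copy with the accumulated graph, not a per-vertex degree bound.
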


The second condition ensures that the counts all sub-graphs of $G_n$ which arise as the join of two non-disjoint copies of $H$ on non-identical vertex sets,  (that is, $t \ne \{1, |V(H)|\}$) are asymptotically negligible. Moreover, as $\Cov(\bm 1\{X_{=\bm s}\}, \bm 1\{X_{=\bm t}\})=0$, whenever $\bm s, \bm t \in V(G_n)_{|V(H)|}$  have at most 1 index in common, the only terms in $\Var T(H, G_n)$ which contribute are those which arise as a $|V(H)|$-join of two copies of $H$. Therefore, Theorem \ref{thm:poisson_linear} captures the asymptotic regime where $T(H, G_n)$ is `linear', and to ensure the existence of the limiting distribution we assume \eqref{eq:Finduced}.

\begin{remark} An easy sufficient condition for \eqref{eq:Finduced} is the convergence of $\frac{1}{c_n^{|V(F)|-1}} N_{\mathrm{ind}}(F, G_n)$ for {\it every} super-graph $F$ of $H$ with $|V(F)|=|V(H)|$. However, condition \eqref{eq:Finduced} does not require the convergence for every such graph, and is applicable to more general examples, as described below: Define a sequence of graphs $G_n$ as follows:
\[   
G_n = 
\begin{cases}
\text{ disjoint union of } n \text{ isomorphic copies of } C_4&\quad\text{ if } ~n \text{ is odd}\\
\text{ disjoint union of } n \text{ isomorphic copies of } \cD&\quad\text{ if } ~n \text{ is even.}\\ 
\end{cases}
\]
where $C_4$ denotes the $4$-cycle and $\cD$ is the $4$-cycle with one diagonal. Choosing $c_n = \lfloor n^{1/3} \rfloor$, gives $\E(T(C_4, G_n))\rightarrow 1$. In this case, 
$$ \frac{\sum_{F\in \sC_{H,1}} N_{\mathrm{ind}}(F,G_n)}{c_n^3} = \frac{N_{\mathrm{ind}}(C_4,G_n)+N_{\mathrm{ind}}(\cD,G_n)}{c_n^3} \rightarrow 1,$$ 
and $\frac{1}{c_n^3} \sum_{F\in \sC_{H,3}} N_{\mathrm{ind}}(F,G_n)=\frac{1}{c_n^3}N_{\mathrm{ind}}(K_4, G_n)=0$, and $\frac{1}{c_n^3} \sum_{F\in \sC_{H,2}} N_{\mathrm{ind}}(F,G_n)=0$, since $\sC_{H,2}$ is empty. Therefore, Theorem \ref{thm:poisson_linear} implies that $T(C_4, G_n) \dto \dPois(1)$ (which can also be directly verified, because, in this case, $T(C_4, G_n)$ is a sum of independent $\dBer(\frac{1}{c_n^3})$ variables).  However, it is easy to see that individually both $\frac{1}{c_n^3} N_{\mathrm{ind}}(C_4, G_n)$ and $\frac{1}{c_n^3} N_{\mathrm{ind}}(\cD, G_n)$ are non-convergent.  \end{remark}

\begin{remark}
Note that a linear combination of Poisson random variables is a special case of the discrete compound Poisson distribution \cite{poisson_zl}. To this end, denote by $Z$ the random variable in the RHS of \eqref{eq:Tlinear} and let $\kappa_H=N(H, K_{|V(H)|})$. Then define $Y$ to be a discrete random variable with $$\P(Y =  k) =  \frac{\lambda_k}{\sum_{k=1}^{\kappa_H} \lambda_k}, \quad \text{for } k \in \{1, \ldots,  \kappa_H \}.$$ It is then easy to see that $Z$ has the same distribution as the discrete compound Poisson variable $Z' = \sum_{i=1}^N Y_i$,  where $\{Y_1, Y_2, \ldots\}$ are independent copies of $Y$, and $N \sim \dPois(\sum_{k=1}^{\kappa_H} \lambda_k)$, which is independent of $\{Y_1, Y_2, \ldots\}$.  
\end{remark}

The rest of this section is organized as follows: The proof of Theorem \ref{thm:poisson_linear} is given below in Section \ref{sec:pfpoissonlinear} and the proof of Theorem \ref{thm:poisson} is described in Section \ref{sec:pfpoisson}.

\subsection{Proof of Theorem \ref{thm:poisson_linear}}
\label{sec:pfpoissonlinear}

We begin with a few notations and definitions. For an ordered tuple $\bm t$ with distinct entries, denote by $\bar{\bm t}$ the (unordered) set formed by the entries of $\bm t$ (for example, if $\bm t=(4, 2, 5)$, then $\bar{\bm t}=\{2, 4, 5\}$).  

Given $J\subseteq V(H)$, define $H[J]$ to be the induced subgraph of $H$ on the vertices in $J$, $H\backslash J$ the graph obtained by removing all vertices in $J$ and the associated edges, and $E_H(J, J^c) = \{(x,y)\in E(H): x\in J \text{ and } y\in V(H)\setminus J\}$. Clearly, $E(H)=E(H[J]) \bigcup E(H\backslash J) \bigcup E_H(J, J^c)$ is an edge partition of $E(H)$.

\begin{defn} Fix $t \in [2, |V(H)|]$ and $J =(j_1, j_2, \ldots, j_t) \in V(H)_t$ and $\bm r=(r_1, r_2, \ldots, r_t) \in V(G_n)_{t}$. Denote by $M_J(\bm r, H, G_n)$  the number of injective homomorphism $\phi: V(H) \rightarrow V(G_n)$ such that $\phi(j_a)=r_a$, for $a \in [t]$. More formally, define $\psi: \bar J \rightarrow [t]$ as $\psi(j_b)=b$, for $b \in [t]$, then 
$$M_J(\bm r, H, G_n)=\prod_{(x, y) \in E(H[\bar{J}])} a_{r_{\psi(x)} r_{\psi(y)}}(G_n) \sum_{\substack{\bm s_{J^c}}} \prod_{(x, y) \in E_{H}(\bar{J}, \bar{J}^c) } a_{r_{\psi(x)} s_y}(G_n) \prod_{(x, y) \in E(H\backslash \bar{J})} a_{s_x s_y}(G_n),$$
where the sum is over tuples $s_{J^c} := (s_x)_{x\in V(H)\setminus \bar{J}} \in (V(G_n)\setminus \bar{\bm r})_{|V(H)\setminus \bar{J}|}$. 
\end{defn}

\begin{example}\label{ex:truncation} To help parse the above definition, we compute $M_J(\cdot, H, G_n)$ in a few examples: 
\begin{itemize}

\item[--]$H=K_{1, 2}$ is the 2-star with the central vertex labeled 1 and $J=(2, 3)$. Then with $\bm r=(i, j)$, 
$$M_{(2, 3)}((i, j), K_{1, 2}, G_n)= M_{(3, 2)}((i, j), K_{1, 2}, G_n) = \sum_{\substack{s=1\\ s \notin \{i, j\}}}^{|V(G_n)|} a_{i s}(G_n) a_{j s}(G_n):=t_{G_n}(i, j),$$ where $t_{G_n}(i, j)$  is the number of common neighbors of $i, j$. Similarly, 
\begin{align}\label{eq:MK12}
M_{(1, 2)}((i, j), K_{1, 2}, G_n)=M_{(1, 3)}((i, j), K_{1, 2}, G_n) = a_{ij}(G_n) (d_{G_n}(i)-a_{ij}(G_n)),
\end{align}  
where $d_{G_n}(i)$ denotes the degree of the vertex $i$ in $G_n$. Finally, 
$M_{(2,1)}((i, j), K_{1, 2}, G_n)=M_{(3,1)}((i, j), K_{1, 2}, G_n) = a_{ij}(G_n) (d_{G_n}(j)-a_{ij}(G_n))$.

\item[--]$H=P_4$, the path of length 3, with vertices labeled $\{1, 2, 3, 4\}$ in order and $J=(2, 4)$, then with $\bm r=(i, j)$, 
$$M_{(2, 4)}((i,j), P_4, G_n)=\sum_{\substack{s_1=1\\ s_1 \notin \{i, j\}}}^{|V(G_n)|} \sum_{\substack{s_3=1\\ s_3 \notin \{s_1, i, j\}}}^{|V(G_n)|} a_{s_1 i}(G_n) a_{i s_3}(G_n) a_{s_3 j}(G_n).$$
The expressions for other ordered tuples $J$ can be obtained similarly. 
\end{itemize}
\end{example}

We now begin the proof of Theorem \ref{thm:poisson_linear}. For $\bm s \in V(G_n)_{|V(H)|}$ and an ordered subset $J \subseteq [|V(H)|]$ denote  by $\bm s_J=(s_j)_{j \in J}$, subset of indices $s_j$ such that $j \in J$. Then, define  
\begin{align}\label{eq:truncationset}
\mathcal A_\varepsilon(H, G_n) = \Bigg\{ \bm s \in V(G_n)_{|V(H)|}: & M_{J_1}(\bm s_{J_2}, H, G_n) \leq \varepsilon c_n^{|V(H)|-t}, \nonumber \\
& \text{ for all } J_1, J_2 \in V(H)_t, \text{ and all } t \in [2, |V(H)|-1]\Bigg\}. 
\end{align}
Informally, $\mathcal A_\varepsilon(H, G_n)$ counts the number of tuples $\bm s \in V(G_n)_{|V(H)|}$ such that the number of copies of $H$ passing through a subset of indices in $\bm s$ is `small'. 

\begin{example}(2-star)\label{ex:K12truncation} If $H=K_{1, 2}$ is the 2-star (with central vertex labeled $1$), then  $\mathcal A_\varepsilon(K_{1, 2}, G_n)$ consists of all $3$-tuples $\bm s=(s_1,s_2,s_3)$ of distinct vertices of $G_n$, such that,  
\begin{itemize}
\item[(1)] $a_{s_is_j}(G_n) (d_{G_n}(s_i)-a_{s_is_j}(G_n)) \leq \varepsilon c_n $  and $a_{s_is_j}(G_n) (d_{G_n}(s_j)-a_{s_is_j}(G_n)) \leq \varepsilon c_n $ (recall \eqref{eq:MK12}), that is, $\max\{d_{G_n}(s_i), d_{G_n}(s_j)\}=\varepsilon c_n + 1$ if there is an edge between $(s_i, s_j)$; and 
\item[(2)] $s_i$ and $s_j$ has at most $\varepsilon c_n$ common neighbors in $G_n$, 
\end{itemize}
for every $1 \leq i \ne j \leq 3$. 
\end{example}

Next, define the {\it main term}
\begin{align}\label{T1}
T^{+}_{\varepsilon}(H, G_n)=\frac{1}{|Aut(H)|}\sum_{\bm s \in \mathcal A_\varepsilon(H, G_n)} M(\bm s, H, G_n) \bm 1\{X_{=\bm s}\},
\end{align}
where $M(\bm s, H, G_n)= \prod_{(a,b) \in E(H)}a_{s_a s_b}(G_n)$, and the {\it remainder term} 
\begin{equation*}
T^-_\varepsilon(H,G_n) = T(H,G_n) - T^+_\varepsilon(H,G_n).
\end{equation*}

\subsubsection{The Remainder Term}  We shall begin by showing that for each fixed $\varepsilon>0$, the remainder term $T^-_\varepsilon(H,G_n)$ converges in $L^1$ to $0$ as $n \rightarrow \infty$. Note that, $A \lesssim_\square B$ means $A\leq C \cdot B$, where $C:=C(\square) >0$ is a constant that depends only on the subscripted quantities. Similarly, $A{~}_{\square}{\gtrsim} B$ is $B \lesssim_\square A$.

\begin{lem}\label{lm:residual}
For each fixed $\varepsilon > 0$, $T^-_\varepsilon(H,G_n) \xrightarrow{L^1} 0$ as $n \rightarrow \infty$.
\end{lem}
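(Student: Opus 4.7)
The plan is to bound $\E T^-_\varepsilon(H, G_n)$, which gives $L^1$ convergence to $0$ because $T^-_\varepsilon \ge 0$. From the definitions,
\[
\E T^-_\varepsilon(H,G_n) = \frac{1}{|Aut(H)|\, c_n^{|V(H)|-1}} \sum_{\bm s \notin \mathcal A_\varepsilon(H,G_n)} M(\bm s, H, G_n).
\]
A union bound over the finitely many triples $(t, J_1, J_2)$ with $t \in [2, |V(H)|-1]$ and $J_1, J_2 \in V(H)_t$ reduces the task to showing, for each such triple,
\[
S_{t,J_1,J_2} := \sum_{\bm s \in V(G_n)_{|V(H)|}} M(\bm s, H, G_n)\, \bm 1\bigl\{M_{J_1}(\bm s_{J_2}, H, G_n) > \varepsilon\, c_n^{|V(H)|-t}\bigr\} = o\bigl(c_n^{|V(H)|-1}\bigr).
\]

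The first step is the identity $\sum_{\bm s:\, \bm s_{J_2} = \bm r} M(\bm s, H, G_n) = M_{J_2}(\bm r, H, G_n)$, which is immediate from the definition: both sides count injective homomorphisms $\phi: V(H)\to V(G_n)$ with $\phi(j_{2a}) = r_a$ for $a\in[t]$. Regrouping $S_{t,J_1,J_2}$ by $\bm r := \bm s_{J_2} \in V(G_n)_t$ and applying the Markov-type bound $\bm 1\{M_{J_1}(\bm r, H, G_n) > \varepsilon c_n^{|V(H)|-t}\} \le M_{J_1}(\bm r, H, G_n)/(\varepsilon\, c_n^{|V(H)|-t})$ yields
\[
S_{t,J_1,J_2} \le \frac{1}{\varepsilon\, c_n^{|V(H)|-t}} \sum_{\bm r \in V(G_n)_t} M_{J_1}(\bm r, H, G_n)\, M_{J_2}(\bm r, H, G_n).
\]

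The second step is to read this last sum as counting ordered pairs $(\phi_1,\phi_2)$ of injective homomorphisms $V(H)\to V(G_n)$ with $\phi_1(j_{1a}) = \phi_2(j_{2a})$ for every $a\in[t]$. Partitioning such pairs by the size $t' \in [t, |V(H)|]$ of the total overlap of the two images, each pair corresponds (up to a multiplicity depending only on $H$) to an injective homomorphism of some $t'$-join $F \in \sJ_{t'}(H)$ into $G_n$, so
\[
\sum_{\bm r} M_{J_1}(\bm r, H, G_n)\, M_{J_2}(\bm r, H, G_n) \lesssim_H \sum_{t'=t}^{|V(H)|} \sum_{F \in \sJ_{t'}(H)} N(F, G_n).
\]
For $t' \in [t, |V(H)|-1]$, the second hypothesis of Theorem~\ref{thm:poisson_linear} gives $N(F, G_n) = o(c_n^{2|V(H)|-t'-1})$, contributing $o(c_n^{|V(H)|-1+t-t'}) \le o(c_n^{|V(H)|-1})$ after dividing by $c_n^{|V(H)|-t}$. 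For $t' = |V(H)|$, every $F \in \sJ_{|V(H)|}(H)$ is a supergraph of $H$ on $|V(H)|$ vertices, so $N(F, G_n) \le \sum_{F' \supseteq F,\, |V(F')| = |V(H)|} N_{\mathrm{ind}}(F', G_n) = O(c_n^{|V(H)|-1})$ by \eqref{eq:Finduced}; dividing by $c_n^{|V(H)|-t}$ gives $O(c_n^{t-1}) = o(c_n^{|V(H)|-1})$ since $t \le |V(H)|-1$.

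The main obstacle is the combinatorial bookkeeping in the second step: one must match the pair-count with injective-homomorphism counts of the $t'$-join graphs for each $t \le t' \le |V(H)|$, control the multiplicities uniformly in $H$, and recognize that the boundary case $t' = |V(H)|$ is governed by the $O(c_n^{|V(H)|-1})$ estimate from \eqref{eq:Finduced} rather than the pointwise-$o$ bound of the second hypothesis. Once this identification is in place, feeding in the two hypotheses of Theorem~\ref{thm:poisson_linear} is mechanical.
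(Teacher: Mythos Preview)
Your proof is correct and follows essentially the same route as the paper's: union bound over $(t,J_1,J_2)$, the Markov-type replacement of the indicator, regrouping by $\bm r=\bm s_{J_2}$, interpreting $\sum_{\bm r} M_{J_1}M_{J_2}$ as a count of homomorphism pairs, bounding by $t'$-joins for $t'\in[t,|V(H)|]$, and then invoking the second hypothesis of Theorem~\ref{thm:poisson_linear} for $t'<|V(H)|$ and the $O(c_n^{|V(H)|-1})$ bound for $t'=|V(H)|$. The only minor imprecision is that $N(F,G_n)=\sum_{F'\supseteq F}N(F,F')\,N_{\mathrm{ind}}(F',G_n)$ rather than a plain $\le$, but since $N(F,F')\lesssim_H 1$ this does not affect the $O(c_n^{|V(H)|-1})$ conclusion.
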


\begin{proof}
To begin with, note that $$\E T^-_\varepsilon(H,G_n)=\frac{1}{c_n^{|V(H)|-1}}\sum_{\bm s \in V(G_n)_{|V(H)|} \backslash \mathcal A_\varepsilon(H, G_n)} \frac{M(\bm s, H, G_n)}{|Aut(H)|}.$$ 
Then, recalling the definition of $\cA_\varepsilon(H, G_n)$ from \eqref{eq:truncationset}, by an union bound
\begin{align}\label{eq:exp_remainder}
\E & T^-_\varepsilon(H,G_n) \nonumber \\
& \leq \frac{1}{c_n^{|V(H)|-1}} \sum_{t=2}^{|V(H)|-1} \sum_{\substack{J_1 \in V(H)_t \\ J_2 \in V(H)_t}}\sum_{\bm s \in V(G_n)_{|V(H)|} } \frac{M(\bm s, H, G_n)}{|Aut(H)|}  \bm 1\{ M_{J_1}(\bm s_{J_2}, H, G_n) > \varepsilon c_n^{|V(H)|-t} \} \nonumber \\
& \lesssim_H \frac{1}{c_n^{|V(H)|-1}} \sum_{t=2}^{|V(H)|-1} \sum_{\substack{J_1 \in V(H)_t \\ J_2 \in V(H)_t}} \sum_{\bm s \in V(G_n)_{|V(H)|} } M(\bm s, H, G_n)   \frac{M_{J_1}(\bm s_{J_2}, H, G_n)}{\varepsilon c_n^{|V(H)|-t}}. 
\end{align} 
In order to complete the proof, it thus suffices to show that 
\begin{equation}\label{eq:MJ1J2}
\sum_{\bm s \in V(G_n)_{|V(H)|}} M(\bm s, H, G_n) M_{J_1}(\bm s_{J_2}, H, G_n) = o(c_n^{2|V(H)|-t-1}), 
\end{equation}
for all $t\in [2,|V(H)|-1]$ and $J_1=(j_{11},\dots,j_{1t})$, $J_2 =(j_{21},\dots,j_{2t})\in V(H)_t$ (see Example \ref{ex:K12bound} for a special case). 

To this end, we have
\begin{align}
	\sum_{\bm s \in V(G_n)_{|V(H)|}} & M(\bm s, H, G_n) M_{J_1}(\bm s_{J_2}, H, G_n) \nonumber \\
	& =\sum_{\bm s_{J_2}} M_{J_2}(\bm s_{J_2}, H, G_n) M_{J_1}(\bm s_{J_2}, H, G_n) \nonumber \tag*{(summing over indices in $\bm s_{J_2^c}$)} \\
	& =\sum_{\bm r \in V(G_n)_{t}} M_{J_2}(\bm r, H, G_n) M_{J_1}(\bm r, H, G_n) \nonumber \tag*{(changing variable $\bm s_{J_2}$ to $\bm r$)} \\
	& =\sum_{\bm r \in V(G_n)_{t}} \big|\{(\phi,\psi)\in \hom_{\mathrm{inj}}(H,G_n)^2: \phi(j_{2a})=r_a=\psi(j_{1a}) \text{ for all } a \in [t]\}\big| \nonumber\\
	& = \big|\{(\phi,\psi)\in \hom_{\mathrm{inj}}(H,G_n)^2: \phi(j_{2,a})=\psi(j_{1,a}) \text{ for all } a \in [t]\}\big| \nonumber\\
	& \lesssim_H \sum_{t'=t}^{|V(H)|}\sum_{\substack{J_1'\supseteq J_1\\J_1'\in V(H)_{t'}}}\sum_{\substack{J_2'\supseteq J_2\\J_2'\in V(H)_{t'}}} N(H_{t'}(J_1',J_2'),G_n) \label{MM}
\end{align}  
The last step is based on the observation that a $(\phi,\psi) \in \hom_{\mathrm{inj}}(H,G_n)^2$ satisfying $\phi(j_{2a})=\psi(j_{1a}) \text{ for all } a \in [t]$, gives rise to a $t'$ join of $H$ with pivots $J_1'$ and $J_2'$ for some $t'\in [t,|V(H)|]$ and $J_1\subseteq J_1'\subseteq V(H)_t'$, $J_1\subseteq J_1'\subseteq V(H)_t'$ in at most finitely (depending only on $|V(H)|$) many ways. The reason we need to introduce $J_1'$ and $J_2'$, is that $\phi(j_2)$ may equal $\psi(j_1)$ for some $j_1 \notin J_1$ and $j_2 \notin J_2$. To elaborate, $J_2'$ consists of all those elements $j_2$ of $V(H)$, for which there exist an element $j_1$ of $V(H)$ such that $\phi(j_2) = \psi(j_1)$, and $J_1' = (\psi^{-1}(\phi(j_2)))_{j_2\in J_2'}$.

Now, note that the sum in \eqref{MM} is a finite sum (depending only on $H$). Further, for each $t' \in [t,|V(H)|-1], J_1\subseteq J_1'\in V(H)_{t'}$ and $J_2\subseteq J_2'\in V(H)_{t'}$, $$N(H_{t'}(J_1',J_2'),G_n) = o(c_n^{2|V(H)|-t'-1}) = o(c_n^{2|V(H)|-t-1}),$$ 
by assumption in Theorem \ref{thm:poisson_linear}. Lastly, for $J_1'\in V(H)_{|V(H)|}$ and $J_2'\in V(H)_{|V(H)|}$, $$N(H_{|V(H)|}(J_1',J_2'),G_n) = O(c_n^{|V(H)|-1})=o(c_n^{2|V(H)|-t-1}),$$

Therefore, $\lim_{n \rightarrow \infty}\E T^-_\varepsilon(H,G_n) \rightarrow 0$, completing the proof of the lemma. 
\end{proof}

\begin{example}\label{ex:K12bound}(2-star continued) To help the reader parse the above proof, we re-do the calculations for the case $H=K_{1, 2}$ (with central vertex labeled $1$), and $J_1=(2, 3)$ and $J_2=(1, 2)$. In this case, the LHS of \eqref{eq:MJ1J2} is 
\begin{align}
\sum_{(s_1, s_2, s_3) \in V(G_n)_{3}} M((s_1, s_2, s_3), & K_{1,2}, G_n) M_{(2, 3)}((s_{1}, s_2), K_{1,2}, G_n) \nonumber \\
&=\sum_{(s_1, s_2, s_3) \in V(G_n)_{3}} a_{s_1s_2}(G_n) a_{s_1 s_3}(G_n) M_{(2, 3)}((s_{1}, s_2), K_{1,2}, G_n) \nonumber \\
&\leq \sum_{s_1 \ne s_2  \in V(G_n)} a_{s_1s_2}(G_n) d_{G_n}(s_1) t_{G_n}(s_1, s_2) \nonumber\tag*{(recall Example \ref{ex:truncation})} \\
& \lesssim N(K_{3}, G_n)+N(\triangle_+, G_n) \nonumber ,
\end{align}
where $\triangle_+$ is the (3, 1)-tadpole (the graph obtained by joining a triangle and a single vertex with a bridge). Now,  $N(K_{3}, G_n)\lesssim N(K_{1,2}, G_n)=O(c_n^2)=o(c_n^3)$ and $N(\triangle_+, G_n)=o(c_n^3)$, by assumption in Theorem \ref{thm:poisson_linear}, which establishes \eqref{eq:MJ1J2}, for $H=K_{1,2}$, and $J_1=(2, 3)$, $J_2=(1, 2)$. 
\end{example}

\subsubsection{The Main Term: Moment Comparison}

To analyze $T_\varepsilon^{+}(H,G_n)$ we use the `independent approximation', where the indictors $\bm 1\{X_{=\bm s}\}$ are replaced by independent Bernoulli variables, for every subset of vertices in $G_n$ of size $|V(H)|$. To this end, define 
\begin{align}\label{eq:W}
J_\varepsilon^+(H, G_n)=\frac{1}{|Aut(H)|}\sum_{\bm s \in \mathcal A_\varepsilon(H, G_n)} M(\bm s, H, G_n) J_{\bar{\bm s}}, 
\end{align}
where  $\{J_S: S \subseteq V(G_n) \text{ and } |S|=|V(H)|\}$ is a collection of i.i.d. $\dBin(1,\frac{1}{c_n^{|V(H)|-1}})$ random variables.

\begin{lem}\label{lm:momentdiff} For every integer $r \geq 1$, 
$$\lim_{\varepsilon\to 0} \limsup_{n \to \infty}\left|\E T^+_\varepsilon(H,G_n)^r - \E J_\varepsilon^+(H, G_n)^r\right| = 0.$$ 
\end{lem}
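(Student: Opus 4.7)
The plan is to expand both moments as sums over ordered $r$-tuples $(\bm s_1, \ldots, \bm s_r) \in \mathcal A_\varepsilon(H, G_n)^r$ and compare them term-by-term:
\begin{equation*}
\E T^+_\varepsilon(H, G_n)^r - \E J^+_\varepsilon(H, G_n)^r = \frac{1}{|Aut(H)|^r} \sum_{(\bm s_1, \ldots, \bm s_r)} \prod_{i=1}^r M(\bm s_i, H, G_n) \, \Delta(\bm s_1, \ldots, \bm s_r),
\end{equation*}
where $\Delta := \E[\prod_i \bm 1\{X_{=\bm s_i}\}] - \E[\prod_i J_{\bar{\bm s}_i}]$. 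Let $S_1, \ldots, S_d$ denote the distinct elements of $\{\bar{\bm s}_1, \ldots, \bar{\bm s}_r\}$. A direct calculation on the uniform coloring gives $\E[\prod_i \bm 1\{X_{=\bm s_i}\}] = c_n^{-(|U|-c)}$, where $U = \bigcup_j S_j$ and $c$ is the number of connected components of the hypergraph on $U$ with hyperedges $\{S_1, \ldots, S_d\}$, while independence of the $J$'s yields $\E[\prod_i J_{\bar{\bm s}_i}] = c_n^{-(|V(H)|-1)d}$. A short combinatorial count shows that $|U|-c \le (|V(H)|-1)d$ with equality if and only if the sets $S_1, \ldots, S_d$ are pairwise disjoint; consequently $\Delta = 0$ on such ``disjoint'' tuples, and only overlapping tuples can contribute to the difference.

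To bound the overlapping contribution, I would classify each tuple by its \emph{contact graph} $\Gamma$ on $\{1, \ldots, d\}$, with an edge $a \sim b$ whenever $t_{ab} := |S_a \cap S_b| \in [2, |V(H)|-1]$, together with the associated overlap positions within each copy of $H$ (edges with $t_{ab} \in \{0, 1\}$ contribute $\Delta = 0$ locally and can be dropped from $\Gamma$, while $t_{ab} = |V(H)|$ would collapse two distinct sets into one and hence never arises). Isolated vertices of $\Gamma$ factor out into a ``background'' constant controlled by the first hypothesis of Theorem \ref{thm:poisson_linear}, so only nontrivial connected components matter. For a component $C$ of $\Gamma$ on $k \ge 2$ nodes, process its vertices along a spanning tree $\sT$ rooted at some $a_1$. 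For the root edge $a_1 \sim a_2$ with overlap size $t$, the argument in the proof of Lemma \ref{lm:residual} gives
\begin{equation*}
\sum_{(\bm s_{a_1}, \bm s_{a_2})} M(\bm s_{a_1}, H, G_n)\, M(\bm s_{a_2}, H, G_n) \,\lesssim_H\, \sum_{J_1, J_2 \in V(H)_t} N(H_t(J_1, J_2), G_n) = o\bigl(c_n^{2|V(H)|-t-1}\bigr).
\end{equation*}
For each subsequent edge of $\sT$ attaching a new node $a_j$ to an already-processed node $a_i$ with overlap size $t' \in [2, |V(H)|-1]$, the inner sum $\sum_{\bm s_{a_j}} M(\bm s_{a_j}, H, G_n) \bm 1\{\bar{\bm s}_{a_j} \cap \bar{\bm s}_{a_i} \text{ prescribed}\}$ equals $M_{J_1}(\bm s_{a_i, J_2}, H, G_n)$ for appropriate $J_1, J_2 \in V(H)_{t'}$, and since $\bm s_{a_i} \in \mathcal A_\varepsilon(H, G_n)$ the truncation gives the bound $\varepsilon \, c_n^{|V(H)|-t'}$. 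Multiplying these bounds along $\sT$ and matching the resulting power of $c_n$ against the factor $c_n^{-(|U|-c)}$ coming from $|\Delta|$, the $c_n$ factors cancel and an overall prefactor of $\varepsilon^{k-1}$ survives for the component. Summing over the finitely many contact structures (whose number depends only on $r$ and $H$) yields $\limsup_{n \to \infty} \bigl|\E T^+_\varepsilon(H, G_n)^r - \E J^+_\varepsilon(H, G_n)^r\bigr| \lesssim_{r, H} \varepsilon$, from which the lemma follows by sending $\varepsilon \to 0$.

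The main obstacle is the dimensional bookkeeping in the second paragraph: one must verify, uniformly over all contact structures, that the product of the root-edge join bound $o(c_n^{2|V(H)|-t-1})$, the truncation bounds $\varepsilon\, c_n^{|V(H)|-t'}$ along the remaining tree edges, and the deficit $c_n^{-(|U|-c)}$ in $|\Delta|$, all expressed in terms of the overlap data $(t_{ab})_{ab \in E(\Gamma)}$, simplifies to a clean positive power of $\varepsilon$ with no residual $c_n$. Auxiliary subtleties include enumerating the overlap-position data $(J_1, J_2)$ (each producing only $H$-dependent constants), handling non-tree edges of $\Gamma$ (which only further constrain the sum and thus strengthen the bound), and ensuring that the spanning-tree processing remains valid when later $M_{J_1}(\bm s_{a_i, J_2}, H, G_n)$ factors involve vertex positions of $\bm s_{a_i}$ already fixed by earlier edges in $\sT$.
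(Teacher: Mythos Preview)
There is a genuine gap in your decomposition by the pairwise contact graph $\Gamma$. The claim that ``isolated vertices of $\Gamma$ factor out'' (and hence that only nontrivial components of $\Gamma$ contribute) fails: a set $S_a$ can have every pairwise overlap $|S_a\cap S_b|\le 1$ and still be entangled with the rest through a \emph{cycle} of size-$1$ intersections, in which case nothing factors and $\Delta\ne 0$. Concretely, with $|V(H)|=3$ take $S_1=\{v_1,v_2,v_3\}$, $S_2=\{v_3,v_4,v_5\}$, $S_3=\{v_5,v_6,v_1\}$. All pairwise overlaps equal $1$, so your $\Gamma$ is edgeless; yet $|U|=6$, $c=1$, so $|U|-c=5<6=d(|V(H)|-1)$ and $\Delta=c_n^{-5}-c_n^{-6}\ne 0$. (This also shows that your ``equality iff pairwise disjoint'' claim is false: equality holds precisely when the intersection hypergraph is tree-like, not only when the sets are disjoint.) For such cycle configurations the truncation cannot be invoked through any single pairwise overlap, and the crude count $N(F,G_n)=O(c_n^{|V(F)|-1})$ only gives an $O(1)$ contribution, not $o(1)$; so your scheme, as stated, does not close.

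The paper's proof avoids this by working with \emph{cumulative} rather than pairwise overlaps. It classifies tuples by the isomorphism type $F=\cP(\bm S)$ and the number $b$ of distinct underlying sets, observes that the nonzero terms are exactly those with $|V(F)|-\nu(F)<b(|V(H)|-1)$, and then invokes an ordering lemma (Lemma~\ref{connected}) showing that whenever this strict inequality holds one can reorder $\bm s_1,\dots,\bm s_r$ so that at some step $t$ the set $\bar{\bm s}_t$ meets the union $\bigcup_{a<t}\bar{\bm s}_a$ in between $2$ and $|V(H)|-1$ vertices. In the cycle example above, processing $S_1,S_2,S_3$ in order yields $|S_3\cap(S_1\cup S_2)|=2$, so the truncation in $\cA_\varepsilon(H,G_n)$ applies at that step and produces the missing factor of $\varepsilon$. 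The fix to your approach is therefore not mere ``dimensional bookkeeping'' but a change of combinatorial invariant: replace the pairwise contact graph by a sequential processing in which the relevant overlap is with everything already placed, which is essentially the content of the paper's Lemma~\ref{lm:epscount}.
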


\begin{proof} We begin with the following definition: 

\begin{defn}
Let $\sS_{\varepsilon, r, b}$ be the collection of all order $r$-tuples $\bm S=(\bm s_1, \bm s_2, \ldots, \bm s_r)$, where $\bm s_j=(s_{j1}, s_{j2}, \ldots, s_{j|V(H)|})$, for $j\in [r]$, such that 
\begin{itemize}
\item[--] $\bm s_j\in \mathcal A_{\varepsilon}(H,G_n)$, for all $j \in [r]$, 

\item[--] $M(\bm s_j, H, G_n) = 1$, for all $j \in [r]$.

\item[--] There are exactly $b$ distinct $|V(H)|$-element sets in the collection $\{\bar{\bm s}_1, \bar{\bm s}_2, \ldots, \bar{\bm s}_r\}$.
\end{itemize}

Finally, for a graph $F$, define 
\begin{align}\label{eq:Sset}
\sS_{\varepsilon, r, b}(F)=\{\bm S=(\bm s_1, \bm s_2, \ldots, \bm s_r) \in \sS_{\varepsilon, r, b} :  \cP(\bm S) \text{ is isomorphic to } F\},
\end{align} 
where $\cP(\bm S)=(V(\cP(\bm S)), E(\cP(\bm S)))$, such that 
\begin{align}\label{eq:Sgraph}
V(\cP(\bm S))=\bigcup_{j=1}^r \bar{\bm s}_j \quad \text{and} \quad  E(\cP(\bm S))=\bigcup_{j=1}^r \{(s_{ja}, s_{jb}): (a,b) \in E(H)\}.
\end{align}
\end{defn}

For $N \geq 1$, denote by $\sG_N$ the set of all labelled graphs on at most $N$ vertices. Moreover, let $\nu(F)$ denote the number of connected components of a graph $F$. 
Then by the multinomial expansion, 
\begin{align}\label{eq:momentdiffI}
|\E T_\varepsilon^+(H, G_n)^r -\E J_\varepsilon^+(H, G_n)^r| & \leq  \frac{1}{|Aut(H)|^r} \sum_{b=1}^r \sum_{\bm S \in \sS_{\varepsilon, r, b}}  \left|\E \prod_{t=1}^r  \bm 1\{X_{=\bm s_t}\}  - \E \prod_{t=1}^r  J_{\bm s_t} \right| \nonumber \\
& = \frac{1}{|Aut(H)|^r} \sum_{b=1}^r \sum_{\bm S \in \sS_{\varepsilon, r, b}} \left|\frac{1}{c_n^{|V(\cP(\bm S))|-\nu(\cP(\bm S))}}- \frac{1}{c_n^{b|V(H)|-b}} \right| \nonumber \\
& \lesssim_{H,r} \sum_{b=1}^r  \sum_{F \in \sG_{r|V(H)|}} \left|\frac{1}{c_n^{|V(F)|-\nu(F)}}- \frac{1}{c_n^{b|V(H)|-b}} \right| |\sS_{\varepsilon, r, b}(F)|, 
\end{align} 
Note that if the graph $F$ is connected and $\sS_{\varepsilon, r, b}(F)$ is non-empty, $|V(F)| -1 \leq  b|V(H)|-b$, and therefore, in general $|V(F)| -\nu(F) \leq  b|V(H)|-b$. Moreover, if $|V(F)|-\nu(F)= b |V(H)| -b$, the corresponding term in the sum in \eqref{eq:momentdiffI} is zero. This implies, 
\begin{align}\label{eq:momentdiffII}
|\E T_\varepsilon^+(H, G_n)^r & -\E J_\varepsilon^+(H, G_n)^r| \nonumber \\
& \lesssim_{H,r} \sum_{b=1}^r  \sum_{F \in \sG_{r|V(H)|}}  \frac{|\sS_{\varepsilon, r, b}(F)|}{c_n^{|V(F)|-\nu(F)}} \bm 1\{ |V(F)|-\nu(F)< b |V(H)| -b \}
\end{align}

To begin with assume that $F$ is connected and $|V(F)|-\nu(F)< b |V(H)| -b$. Then by Lemma \ref{lm:epscount}, $|\sS_{\varepsilon, r, b}(F)| \lesssim_{H,r} \varepsilon c_n^{|V(F)|-1}$. Next, if $F$ is disconnected with connected components $F_1, F_2, \ldots, F_{\nu(F)}$ such that $|V(F)| - \nu(F) < b|V(H)| - b$, then there exist $r_1, r_2, \ldots, r_{\nu(F)}$ and $b_1, b_2, \ldots, b_{\nu(F)}$, with  $\sum_{j=1}^{\nu(F)} r_j=r$ and $\sum_{j=1}^{\nu(F)} b_j=b$, such that $|V(F_i)| - \nu(F_i) \leq b_i|V(H)| - b_i$, for each $i \in [\nu(F)]$, with strict inequality for some $i \in [\nu(F)]$. More precisely, for the $i$-th connected component, $r_i$ is the number of tuples $\bm s_1, \bm s_2, \ldots, \bm s_{r_i}$ forming $F_i$, and $b_i$ is the number of distinct $|V(H)|$-element sets in the collection $\{\bar{\bm s}_1, \bar{\bm s}_2, \ldots, \bar{\bm s}_{r_i} \}$. Then using Lemma \ref{lm:epscount} below on each connected component gives $|\sS_{\varepsilon, r, b}(F)| \lesssim_{H,r} \varepsilon c_n^{|V(F)|-\nu(F)}$.  Therefore, every term in the sum in the RHS of \eqref{eq:momentdiffII} goes to zero as $n \rightarrow \infty$ followed $\varepsilon \rightarrow 0$. This completes the proof of the lemma, because the outside sum is finite (depending only on $H$ and $r$). 
\end{proof} 

\begin{lem}\label{lm:epscount} If $F$ is connected and $\sS_{\varepsilon, r, b}(F)$ is non-empty, then $|\sS_{\varepsilon, r, b}(F)| \lesssim_{H,r} c_n^{|V(F)|-1}$. Moreover, if $|V(F)| < b |V(H)|-b+1$, then $|\sS_{\varepsilon, r, b}(F)| \lesssim_{H,r} \varepsilon c_n^{|V(F)|-1}$.
\end{lem}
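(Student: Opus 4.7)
The plan is to enumerate the tuples $\bm S = (\bm s_1,\ldots,\bm s_r) \in \sS_{\varepsilon, r, b}(F)$ by placing the $b$ distinct underlying $H$-copies in a good order, using the truncation at each step. Each $\bm S$ corresponds, up to a factor $C(H,r)$ absorbing the choice of multiplicities among the $b$ distinct underlying sets and the orderings of labellings within each copy, to an ordered $b$-tuple $(\bm t_1,\ldots,\bm t_b)$ of distinct copies of $H$ in $G_n$, each in $\mathcal A_\varepsilon(H,G_n)$, whose union realizes $F$. Since $F$ is connected, I can fix an ordering so that $F_j := \bigcup_{i \le j} H^{(i)}$ is connected for each $j$, giving $t_j := |V(H^{(j)}) \cap V(F_{j-1})| \ge 1$ for $j \ge 2$ and $|V(F)| = |V(H)| + \sum_{j=2}^b(|V(H)| - t_j)$.

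Next I will bound the number of extensions at each step. The first copy contributes at most $|\hom_{\mathrm{inj}}(H,G_n)| = |Aut(H)|\, N(H,G_n) = O(c_n^{|V(H)|-1})$ choices, where the bound on $N(H,G_n)$ follows from \eqref{eq:Finduced} via $N(H,G_n) = \sum_k k\sum_{F'\in\sC_{H,k}} N_{\mathrm{ind}}(F',G_n)$. At step $j \ge 2$ the $t_j$ shared vertices $\bm r \subseteq V(F_{j-1})$ are fixed by previous choices, so the number of valid $\bm t_j$'s is at most $M_{J_1}(\bm r,H,G_n)$ for the positions $J_1 \subseteq V(H)$ indexing the shared vertices. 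I handle three cases: if $t_j = 1$, I use the trivial bound $M_{J_1}(\bm r,H,G_n) \le |\hom_{\mathrm{inj}}(H,G_n)| = O(c_n^{|V(H)|-1}) = O(c_n^{|V(H)|-t_j})$; if $t_j \in [2,|V(H)|-1]$, the hypothesis $\bm t_j \in \mathcal A_\varepsilon(H,G_n)$ directly yields $M_{J_1}(\bm r,H,G_n) \le \varepsilon c_n^{|V(H)|-t_j}$, with $J_2$ in the definition of $\mathcal A_\varepsilon$ chosen to index the shared positions; and if $t_j = |V(H)|$, the entire tuple $\bm t_j$ lies in $V(F_{j-1})$, giving $O_{H,r}(1) = O(c_n^{|V(H)|-t_j})$ choices. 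Multiplying these factors and absorbing $\varepsilon \le 1$ into the implicit constant yields $|\sS_{\varepsilon,r,b}(F)| \lesssim_{H,r} c_n^{|V(H)|-1}\prod_{j=2}^b c_n^{|V(H)|-t_j} = c_n^{|V(F)|-1}$, establishing the first claim.

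For the quantitative second claim, I compute $\sum_{j=2}^b t_j = b|V(H)| - |V(F)|$, which exceeds $b-1$ under the hypothesis $|V(F)| < b|V(H)|-b+1$; combined with $t_j \ge 1$, this forces $t_{j_0} \ge 2$ at some step $j_0$. At that step I retain the $\varepsilon$ factor in the truncation bound rather than absorbing it, improving the product to $\varepsilon\, c_n^{|V(F)|-1}$.

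The main technical point to check carefully is the application of the truncation in the case $t_j \in [2,|V(H)|-1]$: I must justify that $M_{J_1}(\bm r, H, G_n) \le \varepsilon c_n^{|V(H)|-t_j}$ holds for the fixed $\bm r$ even though $\bm t_j$ has not yet been chosen. The point is that $M_{J_1}(\bm r,H,G_n)$ depends only on $\bm r$ and $G_n$; whenever at least one candidate $\bm t_j \in \mathcal A_\varepsilon$ with the prescribed shared sub-tuple $\bm r$ exists, the definition of $\mathcal A_\varepsilon$ applied to that $\bm t_j$ (with $J_2$ indexing the shared positions) produces the desired bound on $M_{J_1}(\bm r,H,G_n)$, which then controls the total number of extensions; otherwise the count is zero and the bound is vacuous.
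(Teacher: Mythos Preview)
Your approach is the same as the paper's --- enumerate the tuples by building up the $H$-copies one at a time and use the truncation in $\mathcal A_\varepsilon$ to control the number of extensions at each step --- and your proof of the first inequality is correct.

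The second claim, however, has a genuine gap. From $\sum_{j=2}^b t_j > b-1$ you correctly deduce that some $t_{j_0}\ge 2$, but this does \emph{not} imply $t_{j_0}\in[2,|V(H)|-1]$. It is entirely possible that, in the connected ordering you fixed, every $t_j$ lies in $\{1,|V(H)|\}$: for instance, with $H=K_{1,2}$ and three distinct copies on vertex sets $\{a,b,c\}$, $\{c,d,e\}$, $\{b,c,d\}$ ordered in that way, one gets $t_2=1$ and $t_3=3=|V(H)|$, so no step falls in the range $[2,|V(H)|-1]$ and your own case analysis gives only the $O(1)$ bound at step $3$, with no $\varepsilon$. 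Your product bound then yields $c_n^{|V(F)|-1}$ but not $\varepsilon\, c_n^{|V(F)|-1}$.

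What is missing is precisely the combinatorial fact that, under the hypothesis $|V(F)|<b|V(H)|-b+1$, one can always \emph{choose} a connected ordering for which some $t_{j_0}\in[2,|V(H)|-1]$. (In the example above, reordering as $\{a,b,c\},\{b,c,d\},\{c,d,e\}$ gives $t_2=t_3=2$.) This is a separate lemma in the paper (the ordering lemma in the appendix), and its proof is not entirely trivial: one locates the first step at which a new underlying set has full overlap, then swaps it with an earlier copy it meets, checking that the resulting overlap is in $[2,|V(H)|-1]$ or iterating once more. You need either to invoke or to prove this statement; simply asserting $t_{j_0}\ge 2$ and then ``retaining the $\varepsilon$ factor'' is not valid, since your own step-$j$ bound for $t_j=|V(H)|$ carries no $\varepsilon$.
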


\begin{proof} To begin with assume that $|V(F)| < b |V(H)|-b+1$. Then without loss of generality, consider $\bm S=(\bm s_1, \bm s_2, \ldots, \bm s_r) \in \sS_{\varepsilon, r, b}(F)$ in the order given by Lemma \ref{connected}.  
For $ 0 \leq j \leq |V(H)|$, define 
\begin{equation}\label{addition}
\beta_j = \left|\left\{  t \in [2, r] : \left|\bar{\bm s}_t \bigcap \left(\bigcup_{a=1}^{t-1} \bar{\bm s}_a \right) \right| = |V(H)|-j \text{ and } \bar{\bm s}_t \notin \{\bar{\bm s}_1,\dots,\bar{\bm s}_{t-1}\} \right\}\right|. \end{equation}
The connectedness of $F$ and Lemma \ref{connected} implies that $\beta_{|V(H)|}=0$ and 
$\beta_1+ \ldots +\beta_{|V(H)|-2}\geq 1$. Note that 
$$|V(F)|=|V(H)|+ \sum_{j=1}^{|V(H)|-1} j \beta_j \quad \text{and} \quad b=1+\sum_{j=0}^{|V(H)|-1} \beta_j.$$

Now, define 
$$\sB=\left\{\bm \beta=(\beta_j)_{0\leq j\leq {|V(H)|}-1} \in \{0,1,...,r-1\}^{|V(H)|}: |V(H)| + \sum_{j=1}^{|V(H)|-1} j \beta_j = |V(F)| , \sum_{j=1}^{|V(H)|-2} \beta_j \geq 1\right\}.$$ Hence, for every $\varepsilon \in (0,1)$, using the fact that $\bm s_j \in \mathcal A_\varepsilon(H, G_n)$, for all $j \in [r]$, gives
\begin{align}
|\sS_{\varepsilon, r, b}(F)|  &\lesssim_{H, r}  \sum_{\bm \beta \in \sB} N(H,G_n)^{1+\beta_{|V(H)|-1}}\prod_{j=1}^{|V(H)|-2}(\varepsilon c_n^j)^{\beta_j} \nonumber \\
\label{eq:Fbd}&=   \sum_{\bm \beta \in \sB} N(H,G_n)^{1+\beta_{|V(H)|-1}} \varepsilon^{\sum_{j=1}^{|V(H)|-2} \beta_j} c_n^{\sum_{j=1}^{|V(H)|-2} j \beta_j} \\
& \lesssim_{H,r} \varepsilon \sum_{\bm \beta \in \sB} c_n^{(1+\beta_{|V(H)|-1})(|V(H)|-1)}  c_n^{\sum_{j=1}^{|V(H)|-2} j \beta_j} \tag*{(using $\sum_{j=1}^{|V(H)|-2} \beta_j \geq 1$ and $N(H, G_n)=O(c_n^{|V(H)|-1})$)}  \nonumber \\
&=  \varepsilon c_n^{|V(F)|-1} |\sB| \lesssim_{H, r} \varepsilon c_n^{|V(F)|-1}, \nonumber  
\end{align}    
where the last step uses the crude estimate $|\sB| \leq r^{|V(H)|}$. See Example \ref{ex:K12beta12} for an illustration of the argument in the above display in a special case.

Finally, suppose that $|V(F)| \leq b |V(H)|-b+1$. Since $F$ is connected and $\sS_{\varepsilon, r, b}(F)$ is non-empty, there exists $\bm S=(\bm s_1, \bm s_2, \ldots, \bm s_r) \in \sS_{\varepsilon, r, b}(F)$ such that $\beta_{|V(H)|} = 0$, where $(\beta_0, \ldots ,\beta_{|V(H)|})$ is defined as in \eqref{addition}. Define 
$$\sB'=\left\{\bm \beta=(\beta_j)_{0\leq j\leq {|V(H)|}-1} \in \{0,1,...,r-1\}^{|V(H)|}: |V(H)| + \sum_{j=1}^{|V(H)|-1} j \beta_j = |V(F)| \right\}.$$ For a $\bm \beta \in \sB'$, $\sum_{j=1}^{|V(H)|-2} \beta_j $ can be zero, but using $\varepsilon =1$  in \eqref{eq:Fbd} (with $\sB$ replaced by $\sB'$)  
$$|\sS_{\varepsilon, r, b}(F)| \lesssim_{H,r}   \sum_{\bm \beta \in \sB'} N(H,G_n)^{1+\beta_{|V(H)|-1}}  c_n^{\sum_{j=1}^{|V(H)|-2} j \beta_j}  \lesssim_{H, r}   c_n^{|V(F)|-1},$$ completing the proof of the lemma.  
\end{proof}

\begin{example}\label{ex:K12beta12}(2-star continued) Suppose $H=K_{1, 2}$, and $F=\mathcal P(\bm S)$ is connected, where $\bm S=(\bm s_1, \bm s_2, \ldots, \bm s_r)$.  If at the $j$-th step a single new vertex is added, then the number of ways to choose such a triple $\bm s_j$ from $\cA_{\varepsilon}(K_{1, 2}, G_n)$ is at most $O(\varepsilon c_n)$ (recall Example \ref{ex:K12truncation}). On the other hand, if two new vertices are added, the number of possible triples is trivially bounded by $O(N(K_{1, 2}, G_n))$. This implies the bound in \eqref{eq:Fbd} because, the number of times 1 or 2 vertices are added in the sequence $\bm S$ is $\beta_1$ and $\beta_2$, respectively (note that 3 vertices are always added at the first step, which contributes the extra factor of $O(N(K_{1, 2}, G_n))$). 
\end{example}

\subsubsection{Completing the Proof of Theorem \ref{thm:poisson_linear}} Lemma \ref{lm:momentdiff} shows the moments of $T_\varepsilon^{+}(H, G_n)$ and $J_\varepsilon^{+}(H, G_n)$ are asymptotically close. Now, we derive the limiting distribution of $J_\varepsilon^{+}(H, G_n)$.

\begin{lem}\label{lm:Wpoisson} Let $J_\varepsilon^{+}(H, G_n)$ be as defined in \eqref{eq:W}.  Then for every $\varepsilon > 0$,  as $n \rightarrow \infty$, 
$$J_\varepsilon^{+}(H, G_n) \rightarrow \sum_{k=1}^{N(H,K_{|V(H)|})} k Z_k$$ 
in distribution and in moments, where $Z_k\sim \dPois(\lambda_k)$ and the collection $\{Z_k: 1\leq k\leq N(H,K_{|V(H)|})\}$ is independent.
\end{lem}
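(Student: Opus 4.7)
The plan is to rewrite $J_\varepsilon^+(H,G_n)$ explicitly as a finite linear combination of \emph{independent} binomial random variables whose parameters are controlled by the hypothesis \eqref{eq:Finduced} and the residual bound in Lemma \ref{lm:residual}, and then apply the classical law of rare events. First, I would check that the defining condition of $\mathcal A_\varepsilon(H,G_n)$ in \eqref{eq:truncationset} is invariant under permutations of the coordinates of $\bm s$: replacing $\bm s$ by $\pi\bm s$ merely reindexes the sub-tuples $\bm s_{J_2}$ over $J_2\in V(H)_t$, so the max defining $\mathcal A_\varepsilon$ is unaffected. Hence there is a collection $\mathcal A_\varepsilon^\star$ of $|V(H)|$-subsets of $V(G_n)$ such that $\bm s\in\mathcal A_\varepsilon$ iff $\bar{\bm s}\in\mathcal A_\varepsilon^\star$. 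Writing $F_S:=G_n[S]$ for the induced subgraph, the identity $\sum_{\bm s:\bar{\bm s}=S}M(\bm s,H,G_n)/|Aut(H)|=N(H,F_S)$ from \eqref{eq:poissonexp} collapses the inner sum, giving
$$J_\varepsilon^+(H,G_n)=\sum_{S\in\mathcal A_\varepsilon^\star}N(H,F_S)\,J_S=\sum_{k=1}^{N(H,K_{|V(H)|})}k\,Z_k(\varepsilon,n),$$
where $Z_k(\varepsilon,n):=\sum_{S\in\mathcal A_\varepsilon^\star,\,F_S\in\sC_{H,k}}J_S$. Because the index sets across $k$ are disjoint and the $J_S$'s are i.i.d.\ $\dBer(c_n^{-(|V(H)|-1)})$, the variables $Z_1(\varepsilon,n),\dots,Z_{N(H,K_{|V(H)|})}(\varepsilon,n)$ are independent, with $Z_k(\varepsilon,n)\sim\dBin(m_k(\varepsilon,n),c_n^{-(|V(H)|-1)})$ for $m_k(\varepsilon,n):=|\{S\in\mathcal A_\varepsilon^\star:F_S\in\sC_{H,k}\}|$.

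Second, I would show $m_k(\varepsilon,n)/c_n^{|V(H)|-1}\to\lambda_k$ as $n\to\infty$ for each fixed $\varepsilon>0$. The untruncated count satisfies $|\{S:F_S\in\sC_{H,k}\}|=\sum_{F\in\sC_{H,k}}N_{\mathrm{ind}}(F,G_n)$, and by \eqref{eq:Finduced} its normalized version tends to $\lambda_k$. For the discarded mass, converting from subsets to tuples yields
$$\E T_\varepsilon^-(H,G_n)=\frac{1}{c_n^{|V(H)|-1}}\sum_{S\notin\mathcal A_\varepsilon^\star}N(H,F_S)=\sum_{k=1}^{N(H,K_{|V(H)|})}\frac{k\bigl(|\{S:F_S\in\sC_{H,k}\}|-m_k(\varepsilon,n)\bigr)}{c_n^{|V(H)|-1}},$$
which tends to $0$ by Lemma \ref{lm:residual}. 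Since every summand is nonnegative and $k\geq1$, each term vanishes individually, and the resulting limit is $\varepsilon$-independent.

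Finally, since $c_n\to\infty$ and $m_k(\varepsilon,n)c_n^{-(|V(H)|-1)}\to\lambda_k$, the classical Poisson approximation of the Binomial yields $Z_k(\varepsilon,n)\dto\dPois(\lambda_k)$ with convergence of all factorial (and hence ordinary) moments, and independence across $k$ persists in the limit. Iterating $n\to\infty$ followed by $\varepsilon\to0$ therefore gives $J_\varepsilon^+(H,G_n)\to\sum_{k=1}^{N(H,K_{|V(H)|})}k\,X_k$ with $X_k\sim\dPois(\lambda_k)$ independent, in distribution and in all moments. The main obstacle is the bookkeeping around permutation invariance of $\mathcal A_\varepsilon$ (which lets one index it by unordered subsets $S$ and read off the induced subgraph $F_S$) and the conversion of the single aggregate residual bound in Lemma \ref{lm:residual} into a per-$k$ bound; once those are in place, everything reduces to a standard Poisson limit for a finite sum of independent Binomials.
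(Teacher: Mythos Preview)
Your proposal is correct and follows essentially the same route as the paper's proof: both rewrite $J_\varepsilon^+(H,G_n)$ as $\sum_k k\cdot(\text{independent Binomial}_k)$ by grouping subsets $S$ according to $N(H,G_n[S])=k$, then use Lemma~\ref{lm:residual} to show the truncation costs $o(c_n^{|V(H)|-1})$ sets per $k$, and finish with the law of rare events. The only cosmetic difference is that you invoke permutation invariance of $\mathcal A_\varepsilon$ directly to pass to unordered subsets $\mathcal A_\varepsilon^\star$, whereas the paper fixes a canonical ordering $\sigma_0(S)$ to define $\sB_\varepsilon(H,G_n)$; these are equivalent and the paper implicitly uses the same invariance when collapsing the inner sum over $\bm s$ with $\bar{\bm s}=S$.
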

\begin{proof} For each $k \in [1,N(H,K_{|V(H)|})]$, define 
\begin{align}\label{eq:ck}
\cD_k(H, G_n) = \{S\subseteq V(G_n): |S| = |V(H)| \text{ and } N(H,G_n[S]) = k\},
\end{align} 
where $G_n[S]$ is the subgraph of $G_n$ induced on the set $S$.\footnote{For example, $H=K_{1,2}$, then $\cD_1(K_{1, 2},G_n)$ is the collection of all induces 2-stars in $G_n$, $\cD_2(K_{1, 2}, G_n)$ is empty, and $\cD_3(K_{1, 2}, G_n)$ is the number of induced triangles in $G_n$.}

 For every subset $S := \{s_1,\dots,s_{|V(H)|}\}$ of  $V(G_n)$ of size $|V(H)|$, let $\sigma_0(S)=(s_{\sigma_0(1)}, s_{\sigma_0(2)}, \ldots, s_{\sigma_0(|V(H)|)})  \in V(G_n)_{|V(H)|}$, be such that $s_{\sigma_0(1)} <  s_{\sigma_0(2)}  < \cdots < s_{\sigma_0(|V(H)|)}$. Now, define
\begin{align*}
\sB_\varepsilon(H, G_n) = \Bigg\{S \subseteq V(G_n): |S|=|V(H)| \text{ and } \sigma_0(S) \in  \mathcal A_\varepsilon(H, G_n)\Bigg\}.
\end{align*}
Then recalling the definition of $J_\varepsilon^{+}(H, G_n)$ from \eqref{eq:W}, we have 
\begin{align}
J_\varepsilon^{+}(H, G_n)&=\frac{1}{|Aut(H)|}\sum_{\bm s \in \mathcal A_\varepsilon(H, G_n)} M(\bm s, H, G_n) J_{\bar{\bm s}} \nonumber \\
&= \frac{1}{|Aut(H)|}\sum_{k=1}^{N(H,K_{|V(H)|})} \sum_{\substack{\bm s\in \cA_\varepsilon(H, G_n)\\\bar{\bm s}\in \cD_k(H, G_n)}}  M(\bm s, H, G_n) J_{\bar{\bm s}} \nonumber \\ 
&= \frac{1}{|Aut(H)|}\sum_{k=1}^{N(H,K_{|V(H)|})} \sum_{S \in \cD_k(H, G_n)\bigcap \sB_\varepsilon(H, G_n)} \sum_{\substack{\bm s \in V(G_n)_{|V(H)|}\\\bar{\bm{s}}=S}} M(\bm s, H, G_n) J_{\bar{\bm s}} \nonumber\\ &= \sum_{k=1}^{N(H,K_{|V(H)|})} \sum_{S \in \cD_k(H, G_n)\bigcap \sB_\varepsilon(H, G_n)}N(H,G_n[S]) J_S \nonumber\\ &= \sum_{k=1}^{N(H,K_{|V(H)|})} k \sum_{S \in \cD_k(H, G_n)\bigcap \sB_\varepsilon(H, G_n)} J_S \nonumber. 
\end{align}
Now, note that, by definition, the collection $\{\sum_{S \in \cD_k(H, G_n)\bigcap \sB_\varepsilon(H, G_n)} J_S: 1\leq k\leq N(H,K_{|V(H)|})\}$ is independent, and for every fixed $k \in [1,N(H,K_{|V(H)|})]$, $$J_{n,\varepsilon}(k) := \sum_{S \in \cD_k(H, G_n)\bigcap \sB_\varepsilon(H, G_n)} J_S \sim \dBin\left(\left|\cD_k(H, G_n)\bigcap \sB_\varepsilon(H, G_n)\right|, \frac{1}{c_n^{|V(H)|-1}}\right).$$ Therefore, to prove the lemma it suffices to show that $\E J_{n,\varepsilon}(k) \rightarrow \lambda_k$, for every $k \in [1,N(H,K_{|V(H)|})]$. To this end, note that
\begin{align}
& \left|\cD_k(H, G_n)\right| - \left|\cD_k(H, G_n)\bigcap \sB_\varepsilon(H, G_n)\right| \nonumber \\
&= \left|\{S\in \cD_k(H, G_n): \sigma_0(S) \notin \mathcal A_\varepsilon(H, G_n)\}\right| \nonumber \\ &\leq \frac{1}{|Aut(H)|}\left|\{\bm s \in V(G_n)_{|V(H)|} : M(\bm s , H, G_n) = 1 \text{ and } \bm s \notin \mathcal A_\varepsilon(H, G_n)\}\right|\nonumber \\ &= \frac{1}{|Aut(H)|}\sum_{\bm s \in V(G_n)_{|V(H)|}} M(\bm s , H, G_n) \bm 1\{\bm s \notin \mathcal A_\varepsilon(H, G_n)\}\nonumber \\&= c_n^{|V(H)|-1}\E T_\varepsilon^-(H,G_n)\nonumber\\ &= o(c_n^{|V(H)|-1})\nonumber
\end{align}
by Lemma \ref{lm:residual}. Thus, $\E J_{n,\varepsilon}(k) = \frac{1}{c_n^{|V(H)|-1}} |\cD_k(H, G_n)| + o(1)$. The lemma now follows from assumption \ref{eq:Finduced} of Theorem \ref{thm:poisson_linear}, and the observation that $|\cD_k(H, G_n)| = \sum_{F\in \sC_{H,k}} N_{\mathrm{ind}}(F,G_n)$. 
\end{proof}

\begin{example}(2-star continued) If $H=K_{1,2}$, then every set $S \in \sB_\varepsilon(K_{1,2}, G_n)$ for which the induced graph $G_n[S]$ is a triangle, contributes to $J_{\varepsilon}^+(K_{1,2},G_n)$ the same Bernoulli variable three times, since $N(K_{1, 2}, K_3)=3$. On the other hand, if the induced graph $G_n[S]$ is a $2$-star, then $S$ contributes a single Bernoulli variable to $J_{\varepsilon}^+(K_{1,2},G_n)$. By the joint independence of the collection $J_S$ over all three-element subsets $S$ of $V(G_n)$, it follows that $J_\varepsilon^+(H,G_n)=J_{n,\varepsilon}'+3 J_{n,\varepsilon}''$, where $J_{n,\varepsilon}'$ and $J_{n,\epsilon}''$ are independent Binomial random variables. The calculation in the above lemma implies that $\E J_{n,\varepsilon}'=\frac{1}{c_n^2} |\cD_1(K_{1, 2}, G_n)|+o(1)=\frac{1}{c_n^2} N_{\mathrm{ind}}(K_{1, 2}, G_n)+o(1)=\lambda_1+o(1)$, and, similarly, $\E J_{n,\varepsilon}''=\frac{1}{c_n^2} |\cD_3(K_{1, 2}, G_n)|+o(1)=\frac{1}{c_n^2} N_{\mathrm{ind}}(K_{3}, G_n)+o(1)= \lambda_3+o(1)$ (by assumption \eqref{eq:Finduced}). 
\end{example}

To complete the proof of Theorem \ref{thm:poisson_linear}, let $Z$ be the random variable on the RHS of \eqref{eq:Tlinear}. The above lemma, combined with Lemma \ref{lm:momentdiff}, implies that, for all $r \geq 1$,
\begin{align}\label{eq:T_moments}
\lim_{\varepsilon \rightarrow 0} \limsup_{n \rightarrow \infty}|\E(T_\varepsilon^{+}(H, G_n))^r- \E Z^r|=0.
\end{align} 
Furthermore, the random variable $Z$ has a finite moment generating function, which implies, by Lemma \ref{lm:characteristic_function}, that $T_\varepsilon^{+}(H, G_n) \dto Z$, as $n \rightarrow \infty$ followed by $\varepsilon \rightarrow 0$. Hence, $T(H, G_n) \dto Z$ (by Lemma \ref{lm:residual}), as $n \rightarrow \infty$, completing the proof. \hfill $\Box$ \\

Recently, Bhattacharya and Mukherjee \cite{BMdense} characterized the limiting distribution of $T(H, G_n)$, when $G_n$ is a sequence of dense graphs converging to a graphon $W$. In the following remark, we discuss how Theorem \ref{thm:poisson_linear} can be used to re-derive \cite[Theorem 1.1]{BMdense}, which obtains the limiting distribution $T(H, G_n)$ for a converging sequence of dense graphs, in the Poisson regime.

\begin{remark}\label{rm:dense}(Dense Graphs) Recall that a {\it graphon} $W:[0, 1]^2 \rightarrow [0, 1]$  is a measurable function satisfying $W(x, y) = W(y,x)$, for all $x, y$. A finite simple graph $G=(V(G), E(G))$ can also be represented as a graphon in a natural way: Define $f^G(x, y) =\boldsymbol 1\{(\ceil{|V(G)|x}, \ceil{|V(G)|y})\in E(G)\}$, that is, partition $[0, 1]^2$ into $|V(G)|^2$ squares of side length $1/|V(G)|$, and let $f^G(x, y)=1$ in the $(i, j)$-th square if $(i, j)\in E(G)$, and 0 otherwise. For a simple graph $F$ with $V (F)= \{1, 2, \ldots, |V(F)|\}$, define $$t(F,W) =\int_{[0,1]^{|V(F)|}}\prod_{(i,j)\in E(F)}W(x_i,x_j) \mathrm dx_1\mathrm dx_2\cdots \mathrm dx_{|V(F)|}$$ (continuous analogue of the homomorphism density). The basic definition of graph-limit theory is the following: A sequence of graphs $\{G_n\}_{n\geq 1}$ is said to {\it converge to $W$} if for every finite simple graph $F$, 
$\lim_{n\rightarrow \infty}t(F, G_n) = t(F, W)$ (refer to Lov\'asz \cite{lovasz_book} for more on graph limit theory).

In \cite[Theorem 1.1]{BMdense} the authors showed that $T(H, G_n)$ converges to a linear combination of independent Poisson random variables, whenever $\E(T(H, G_n))=O(1)$, and $G_n$ converges to a graphon $W$ such that $t(H, W) > 0$. This result can de derived as a  consequence of Theorem \ref{thm:poisson_linear} as follows: If $G_n$ is a sequence of dense graphs, as above, colored with $c_n$ colors such that $\E(T(H, G_n)) \rightarrow \lambda$, then 
$$c_n=\Theta(|V(G_n)|^{\frac{|V(H)|}{|V(H)|-1}}),$$ 
since $N(H, G_n)=\Theta(|V(G_n)|^{|V(H)|})$,\footnote{For two non-negative sequences $(a_n)_{n \geq 1}$ and $(b_n)_{n \geq 1}$, $a_n=\Theta(b_n)$ means that there exist positive constants $C_1, C_2 $, such that $C_1 b_n \leq a_n \leq C_2 b_n$, for all $n$ large enough.} by assumption $t(H, W)>0$. Therefore, for $t \in [2, |V(H)|-1]$, and $F \in \sJ_t(H)$, 
\begin{align} 
N(F, G_n)=O(|V(G_n)|^{|V(F)|})=O(|V(G_n)|^{2|V(H)|-t})&= O\left(c_n^{\frac{2|V(H)|-t}{2}}\right) \nonumber \\
&=o(c_n^{2|V(H)|-t-1}), \nonumber
\end{align} 
which establishes the second assumption of Theorem \ref{thm:poisson_linear}. Finally, since the convergence of $G_n$ to a graphon $W$ implies the convergence of the proportion of induced subgraphs in $G_n$,   the limits in \eqref{eq:Finduced} exist, and, hence, \cite[Theorem 1.1]{BMdense} follows: $$T(H, G_n) \dto \sum_{F \supseteq H: |V(F)|=|V(H)|} N(H, F) X_F,$$ where $X_F\sim \dPois(\lambda_F)$ (where $\lambda_F:=\lim_{n\rightarrow \infty}\frac{1}{c_n^{|V(F)|-1}} N_{\mathrm{ind}}(F, G_n)$ exists  because of the convergence of $G_n$) and the collection $\{X_F: F \supseteq H \text{ and } |V(F)|=|V(H)|\}$ is independent. As usual, we consider only non-isomorphic (unlabelled) super-graphs $F$ of $H$, whenever we write $F \supseteq H$.
\end{remark}

\subsection{Proof of Theorem \ref{thm:poisson}}
\label{sec:pfpoisson}

Note that, for  $\bm s_1, \bm s_2 \in V(G_n)_{|V(H)|}$ such that $\bar{\bm s}_1\bigcap \bar{\bm s}_2 \neq \phi$, 
$$\Cov (\bm 1\{X_{=\bm s_1}\}, \bm 1\{X_{=\bm s_2} \})=\frac{1}{c_n^{2|V(H)|-|\bar{\bm s}_1 \bigcap \bar{\bm s}_2|-1}}-\frac{1}{c_n^{2|V(H)|-2}} .$$ The covariance is $0$ if $\bar{\bm s}_1\bigcap \bar{\bm s}_2$ is empty or singleton. Therefore,

\begin{align}\label{eq:varT}
\Var T(H, G_n)=& R_{1,n}+R_{2,n}
\end{align}
where 
\begin{align}\label{eq:R1}
R_{1,n} =\frac{1}{c_n^{|V(H)|-1}}\left(1-\frac{1}{c_n^{|V(H)|-1}} \right) N(H, G_n) \rightarrow \lambda,
\end{align} 
since $\E T(H, G_n) =\frac{1}{c_n^{|V(H)|-1}} N(H, G_n) \rightarrow \lambda$,  and the covariance terms 
\begin{align}\label{eq:R2I}
R_{2,n} =\sum_ {t=2}^{|V(H)|} \frac{1}{c_n^{2|V(H)|-t-1}}\left(1-\frac{1}{c_n^{t-1}}\right)  |\mathcal{K}(t,H,G_n)|,  
\end{align} 
where $\mathcal{K}(t,H,G_n)$ is the set of all ordered pairs $(H_1,H_2)$ such that $H_1 \neq H_2$ are subgraphs of $G_n$ isomorphic to $H$, sharing exactly $t$ vertices in common.
Now, the assumption $\Var T(H, G_n) \rightarrow \lambda$ and \eqref{eq:R1} implies that $R_{2,n} \rightarrow 0$. Therefore, 
\begin{equation}\label{R2new}
|\mathcal{K}(t,H,G_n)| = o(c_n^{2|V(H)|-t-1}) .
\end{equation}
for every $t \in [2,|V(H)|]$. Further, for every $t \in [2,|V(H)|-1]$, 
\begin{equation}\label{R2new1}
\sum_{F \in \sJ_t(H)} N(F,G_n) \lesssim_H  |\mathcal{K}(t,H,G_n)|.
\end{equation}
Combining \eqref{R2new} and \eqref{R2new1} imply that $N(F,G_n) = o(c_n^{2|V(H)|-t-1})$ for all $t\in [2,|V(H)|-1]$ and all $F \in \sJ_t(H)$. 

Next consider $t=|V(H)|$ in \eqref{R2new} and note that 
\begin{equation}\label{R2new2}
|\mathcal{K}(|V(H)|,H,G_n)| = \sum_{k=2}^{N(H,K_{|V(H)|})} k(k-1)|\cD_k(H, G_n)|, 
\end{equation}
where $\cD_k(H, G_n)$ is as defined in \eqref{eq:ck}. This follows by first choosing the common vertex set from exactly one of the collections $\cD_k(H, G_n)$ for $k \in [2,N(H,K_{|V(H)|})]$, and then choosing the pair $(H_1,H_2)$ in $k(k-1)$ ways.\footnote{For example, if $H=C_4$ is the 4-cycle, and $G_n=K_n$ is the complete graph, the LHS in \eqref{R2new} is $6 {n \choose 4}$ (choose $H_1$ from $G_n$ in $N(C_4, G_n)=3 {n \choose 4}$ ways, which leaves 2 choices for $H_2$) which matches with the RHS, since $|\cD_2(C_4, G_n)|=0$, and $|\cD_3(C_4, G_n)|= {n \choose 4}$ (every 4-tuple in $G_n$ has an induced $K_4$ and $N(C_4, K_4)=3$).}

Combining \eqref{R2new} and \eqref{R2new2} gives $|\cD_k(H, G_n)| = o(c_n^{|V(H)|-1})$ for all $k \in [2,N(H,K_{|V(H)|})]$. Now, using $|\cD_k(H, G_n)|=\sum_{F\in \sC_{H,k}} N_{\mathrm{ind}}(F,G_n)$ gives 
\begin{equation}\label{R2new3}
\sum_{F\in \sC_{H,k}} N_{\mathrm{ind}}(F,G_n) = o(c_n^{|V(H)|-1}),
\end{equation}            
that is, $\lambda_k = 0$ for all $k \in [2,N(H,K_{|V(H)|})]$. Lastly, by a counting argument similar to the one used above,  
\begin{equation}\label{R2new4}
N(H,G_n) = \sum_{k=1}^{N(H,K_{|V(H)|})} k|\cD_k(H, G_n)| .
\end{equation}
Since $\frac{1}{c_n^{|V(H)|-1}}N(H,G_n) \rightarrow \lambda$, \eqref{R2new4} now implies that $\frac{1}{c_n^{|V(H)|-1}}|\mathcal{D}_1(H,G_n)|\rightarrow \lambda$, and hence,
\begin{equation*}
\frac{\sum_{F\in \sC_{H,1}} N_{\mathrm{ind}}(F,G_n)}{c_n^{|V(H)|-1}} \rightarrow \lambda.
\end{equation*}
Condition \eqref{eq:Finduced} of Theorem \ref{thm:poisson_linear} is thus satisfied with $\lambda_1 = \lambda$ and $\lambda_k = 0$ for all $k \in [2,N(H,K_{|V(H)|})]$. Theorem \ref{thm:poisson_linear} now implies that $T(H,G_n) \dto \dPois(\lambda)$, completing the proof of the second-moment phenomenon for monochromatic subgraphs.

\section{Proof of Theorem \ref{thm:star}}
\label{sec:pfstar}

The if part follows directly from Theorem \ref{thm:poisson}. The proof of the only-if part is given in Section \ref{sec:pfonlyif}. The counter-example when $H$ is not a star-graph is explained in Section \ref{sec:Hexample}.

\subsection{$T(K_{1, r}, G_n) \dto \dPois(\lambda)$ implies Convergence of Moments}
\label{sec:pfonlyif}

We begin by showing that $T(K_{1, r}, G_n) \dto \dPois(\lambda)$ implies  $\E T(K_{1, r}, G_n)$ is bounded. 

\begin{lem}\label{lm:rstar}
Let $\{G_n\}_{n\ge 1}$ be a sequence of deterministic graphs colored uniformly with $c_n$ colors. Then 
$$T(K_{1,r},G_n)\pto 
\left\{\begin{array}{ccc}
0  & \text{if}  &  \lim_{n\rightarrow\infty} \E T(K_{1, r}, G_n)=0,\\
\infty  & \text{if}  & \lim_{n\rightarrow\infty} \E T(K_{1, r}, G_n)=\infty.
\end{array}\right.
$$
\end{lem}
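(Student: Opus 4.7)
My proof would begin with the trivial direction. Since $T(K_{1,r},G_n)\in \mathbb{Z}_{\ge 0}$, Markov's inequality gives $\P(T(K_{1,r},G_n)\ge 1)\le \E T(K_{1,r},G_n)\to 0$, establishing $T(K_{1,r},G_n)\pto 0$.

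For the divergent direction ($\mu_n := \E T(K_{1,r},G_n)\to\infty$), I plan to exploit the star-specific decomposition
\[
T(K_{1,r},G_n) \;=\; \sum_{v\in V(G_n)} \binom{M_v}{r}, \qquad M_v := |\{u\in N_{G_n}(v):X_u=X_v\}|,
\]
where $M_v\sim \dBin(d_v,1/c_n)$ unconditionally (with $d_v:=d_{G_n}(v)$), so that $\mu_n=\sum_v\binom{d_v}{r}/c_n^r$. Fix $L>0$; assume for contradiction that $\P(T(K_{1,r},G_{n_k})\le L)\ge \varepsilon>0$ along a subsequence $n_k$, and pass to a further subsequence on which the maximum-degree ratio $\Delta_{n_k}/c_{n_k}$, with $\Delta_n:=\max_v d_v$, either tends to infinity or stays bounded.

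In the unbounded sub-case $\Delta_{n_k}/c_{n_k}\to\infty$, let $v_k^\star$ attain $\Delta_{n_k}$. Then $M_{v_k^\star}\sim\dBin(\Delta_{n_k},1/c_{n_k})$ has mean $\Delta_{n_k}/c_{n_k}\to\infty$ and variance at most its mean, so Chebyshev gives $M_{v_k^\star}\ge \Delta_{n_k}/(2c_{n_k})$ with probability tending to one. Hence $T(K_{1,r},G_{n_k})\ge \binom{M_{v_k^\star}}{r}\pto\infty$, contradicting the supposition.

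In the bounded sub-case $\Delta_{n_k}\le Dc_{n_k}$ for some finite $D$, the plan is to apply Chebyshev directly to $T(K_{1,r},G_{n_k})$ via the variance bound $\Var T(K_{1,r},G_{n_k})=O_{D,r}(\mu_{n_k})$, so that (using $\mu_{n_k}\to\infty$) $T(K_{1,r},G_{n_k})/\mu_{n_k}\pto 1$ and thus $T(K_{1,r},G_{n_k})\pto\infty$, again a contradiction. For the variance I would use \eqref{eq:varT}--\eqref{eq:R2I}: $R_{1,n}\le \mu_n$ is immediate, and the task reduces to controlling the join counts $|\mathcal K(t,K_{1,r},G_n)|$ for $t\in[2,r+1]$. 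I would partition ordered pairs of distinct $K_{1,r}$-subgraphs sharing exactly $t$ vertices by the positions of their centres---identical centres with $t-1$ shared leaves, distinct adjacent centres each lying in the other's leaf set, or distinct centres with at most one of them a leaf of the other's star---and in each sub-case bound the count by a vertex- or edge-sum such as $\sum_v d_v^{2r-t+1}$ and $\sum_{(u_1,u_2)\in E(G_n)} d_{u_1}^{r-1} d_{u_2}^{r-t+1}$. Under $\Delta_n\le Dc_n$, and using that any centre of a $K_{1,r}$-star has degree $\ge r$ so that $d_v^r \lesssim_r \binom{d_v}{r}$, these sums are comparable to $c_n^{r-t+1}\sum_v\binom{d_v}{r}=c_n^{2r+1-t}\mu_n$, yielding $|\mathcal K(t,K_{1,r},G_n)|=O_{D,r}(c_n^{2r+1-t}\mu_n)$ as required.

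The main obstacle is the combinatorial case analysis in the bounded sub-case, particularly the edge-sum term arising from pairs of adjacent-centred stars; the non-obvious point is translating such edge-sums back to $\mu_n$, which relies on restricting the sum to vertices with degree at least $r$ so that the degree powers can be controlled by binomial coefficients and thereby by $\mu_n$.
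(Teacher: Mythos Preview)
Your argument is correct, but it takes a more circuitous route than the paper's. The paper also proceeds via Chebyshev and the bound $\Var T(K_{1,r},G_n)=o(\mu_n^2)$, but it avoids your case split on $\Delta_n/c_n$ entirely. The key device is Lemma~\ref{lm:rstarcount}: since
\[
\Delta(G_n)^r \;\lesssim_r\; \binom{\Delta(G_n)}{r} \;\le\; N(K_{1,r},G_n),
\]
one has $\Delta(G_n)\lesssim_r N(K_{1,r},G_n)^{1/r}$ \emph{unconditionally}, and for any connected join $F\in\sJ_t(K_{1,r})$,
\[
N(F,G_n)\;\lesssim_r\; N(K_{1,r},G_n)\cdot\Delta(G_n)^{|V(F)|-r-1}\;\lesssim_r\; N(K_{1,r},G_n)^{(2r-t+1)/r}.
\]
Plugging into \eqref{eq:R2I} gives $R_{2,n}\lesssim_r\sum_{t=2}^{r+1}\mu_n^{\,2-(t-1)/r}=o(\mu_n^2)$ directly. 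In effect, the paper replaces your bounded-case hypothesis $\Delta_n\le Dc_n$ by the always-valid $\Delta_n\lesssim_r(c_n^r\mu_n)^{1/r}$, which makes your separate treatment of $\Delta_n/c_n\to\infty$ (and the subsequence scaffolding around it) unnecessary. Your bounded-case estimate $\Var T=O_{D,r}(\mu_n)$ is actually sharper than the paper's $o(\mu_n^2)$ under that extra assumption, and your case analysis on centre positions makes the combinatorics more explicit; but the paper's one-line counting lemma packages all of this more economically and also feeds into the subsequent moment bound \eqref{eq:Fcountstar}.
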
 

\begin{proof}
If $\E T(K_{1,r},G_n)\rightarrow 0$, then $\P(T(K_{1,r},G_n)>0)\le \E T(K_{1,r},G_n) \rightarrow 0$. 

To show that $T(K_{1,r},G_n)$ diverges, if $\E T(K_{1,r},G_n)\rightarrow \infty$, it suffices to show that $\Var T(K_{1,r},G_n)=o((\E T(K_{1,r},G_n))^2)$, because this implies that $T(K_{1,r},G_n)/\E (T(K_{1,r},G_n))\pto 1$, which is possible only if $T(K_{1,r},G_n)\pto \infty$.  

Write $\Var T(K_{1,r},G_n) = R_{1,n} + R_{2,n}$, as in \eqref{eq:varT} (with $H = K_{1,r}$). Clearly, 
\begin{equation}\label{R1bound}
R_{1,n} \leq \E T(K_{1,r}(G_n)) = o((\E T(K_{1,r},G_n))^2) .
\end{equation}
Next, observe that for each $t \in [2, r+1]$, 
\begin{equation}\label{nonunique}
\sum_{\substack{\bm u \ne \bm v \in V(G_n)_{|V(H)|}\\  |\bar{\bm u} \bigcap \bar{\bm v}|=t}} \frac{M(\bm u, K_{1,r}, G_n) M(\bm v, K_{1,r}, G_n)}{|Aut(K_{1,r})|^2} \lesssim_r \sum_{F \in \sJ_t(K_{1,r})} N(F,G_n) .
\end{equation}
For each $F \in \sJ_t(K_{1,r})$, by Lemma \ref{lm:rstarcount}, $N(F,G_n) \lesssim_r N(K_{1,r},G_n)^\frac{2r-t+1}{r}$. Therefore, by \eqref{eq:R2I} and \eqref{nonunique}, 
\begin{align}\label{R2bound}
	R_{2,n} \lesssim_r \sum_ {t=2}^{|V(H)|}  \frac{N(K_{1,r},G_n)^\frac{2r-t+1}{r}}{c_n^{2r-t+1}} = \sum_ {t=2}^{|V(H)|}  \left(\E T(K_{1,r},G_n)\right)^{2-\frac{t-1}{r}} = o((\E T(K_{1, r}, G_n))^2).
\end{align} 
Now, \eqref{R1bound} and \eqref{R2bound} imply that $\Var T(K_{1,r},G_n)=o((\E T(K_{1,r},G_n))^2)$, completing the proof of the lemma.
\end{proof}

By the above proposition, $T(K_{1, r}, G_n) \dto \dPois(\lambda)$, implies that $\E T(K_{1, r}, G_n) =\frac{N(K_{1, r}, G_n)}{c_n^r}=\Theta(1)$. Therefore, by Lemma \ref{lm:rstarcount}, 
\begin{align}\label{eq:Fcountstar}
N(F, G_n)=O(c_n^{|V(F)|-\nu(F)}),
\end{align} for any graph $F$ which is the union of $r$-stars with $\nu(F)$ connected components. Using this we can show that the moments of $T(K_{1, r}, G_n)$ are bounded. To this end, set $r'=r+1$ and fix an integer $m \geq 1$. Let $\cS$ be the collection of all ordered $m$-tuples $(\bm s_1, \bm s_2, \ldots, \bm s_m)$, where $\bm s_j := (s_{j1},\dots,s_{jr'}) \in V(G_n)_{r'}$, for $j \in [m]$, and $M(\bm s_j,K_{1,r},G_n)=1$, for every $j \in [m]$. Then by the multinomial expansion, 
\begin{align}\label{eq:starmoment}
\E T(K_{1, r}, G_n)^m  =  \frac{1}{|Aut(K_{1, r})|^m}\sum_{\cS}  \E \prod_{j=1}^m \bm 1\{X_{=\bm s_j}\}  & = \frac{1}{|Aut(K_{1, r})|^m} \sum_{\cS}  \frac{1}{c_n^{|V(F)|-\nu(F)}}, 
\end{align}
where $F=F(\bm s_1,\cdots,\bm s_m)$ is the graph on vertex set $V(F)=\bigcup_{j=1}^m \bar{\bm s}_j$ and edge set $\bigcup_{j=1}^m \{(s_{ja}, s_{jb}): (a,b) \in E(K_{1, r})\}$, and $\nu(F)$ is the number of connected components of $F$. 
Denote by $\sG_m(K_{1,r})$ the collection of all unlabelled graphs formed by the join of $m$ isomorphic copies of $K_{1, r}$.\footnote{For any graph $H$, $\sG_2(H)$ is the collection of all non-isomorphic graphs obtained the join of 2 copies of $H$, as in Definition \ref{defn:tjoin}. For $m \geq 3$,  define $\sG_m(H)$ inductively, as the collection of all non-isomorphic graphs $F$, that can be obtained by identifying $t$ vertices of $H$, for some $t \in [1, |V(H)|]$, with $t$ vertices of some graph $F_1 \in \sG_{m-1}(H)$.}

Then \eqref{eq:starmoment} implies    
\begin{align*}
\E T(K_{1,r},G_n)^m \lesssim_{r, m} \sum_{F\in \cH_{r, m}}\frac{N(F,G_n)}{c_n^{|V(F)|-\nu(F)}}=O(1),
\end{align*}
using \eqref{eq:Fcountstar}, since $\cH_{r, m}$ is a finite set (depending only on $r$ and $m$). This implies, by uniform integrability, 
$\E T(K_{1, r}, G_n)^m \rightarrow \E(\dPois(\lambda))^m$, for every $m \geq 1$. In particular,  $\E T(K_{1, r}, G_n) \rightarrow \lambda$ and $\Var T(K_{1, r}, G_n) \rightarrow \lambda$, as required in \eqref{eq:rcondition}. Therefore, to complete the proof of the only if part it remains to prove the following lemma:  

\begin{lem}\label{lm:rstarcount}
Let $F$ be a graph formed by the union of $r$-stars with $\nu(F)$ connected components. Then for any graph $G_n$
\begin{equation*}
N(F,G_n)\lesssim_{F,r} N(K_{1, r}, G_n)^{\frac{|V(F)|-\nu(F)}{r}}.
\end{equation*} 
\end{lem}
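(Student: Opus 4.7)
I would prove this by first reducing to the connected case and then tackling the connected case via a sequential embedding argument combined with H\"older's inequality.

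\textbf{Reduction to the connected case.} Every $r$-star in the union decomposition of $F$ is connected, so each copy of $K_{1,r}$ lies entirely within a single connected component of $F$. Hence each connected component $F_i$ of $F$ is itself a union of $r$-stars, and $N(F, G_n) \leq C \prod_{i=1}^{\nu(F)} N(F_i, G_n)$ for a constant $C = C(F,r)$ accounting for cross-component distinctness of images. Since $\sum_i (|V(F_i)| - 1) = |V(F)| - \nu(F)$, it suffices to prove the lemma assuming $\nu(F) = 1$, in which case the target exponent reduces to $(|V(F)| - 1)/r$.

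\textbf{The connected case.} Write $F = S_1 \cup \cdots \cup S_m$ as a union of copies $S_k \cong K_{1,r}$, ordered so that each partial union $F_k := \bigcup_{j \leq k} S_j$ is connected (possible since $F$ is). Setting $u_k := |V(S_k) \setminus V(F_{k-1})|$ gives $u_1 = r+1$, $u_k \in \{0, 1, \ldots, r\}$ for $k \geq 2$, and $\sum_k u_k = |V(F)|$. I would bound $N(F, G_n)$ star by star. The initial embedding of $S_1$ contributes a factor of order $N(K_{1,r}, G_n)$, i.e., $N(K_{1,r}, G_n)^{(u_1 - 1)/r}$ since $u_1 - 1 = r$. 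Each subsequent extension places $u_k$ new vertices, all constrained to the neighborhood of (the image of) the center $c_k$ of $S_k$; this yields a factor of order $d_{\phi(c_k)}^{u_k}$ in the easier case that $c_k$ is already in $V(F_{k-1})$, and a comparable expression when $c_k$ is new. Summing across embeddings of $F_{k-1}$ and consolidating these degree-power factors via H\"older's inequality, together with the power-mean estimate $D_k := \sum_v d_v^k \leq D_r^{k/r}$ for $k \geq r$ (a consequence of $\ell^p$-norm monotonicity under counting measure) and the fact that $D_r \lesssim_r N(K_{1,r}, G_n)$ when restricted to vertices of degree $\geq r$ (the only ones that can serve as images of centers of $F$), one obtains a per-step contribution of $N(K_{1,r}, G_n)^{u_k/r}$. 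The telescoping product then gives $N(F, G_n) \lesssim_{F,r} N(K_{1,r}, G_n)^{(u_1 - 1)/r + \sum_{k \geq 2} u_k / r} = N(K_{1,r}, G_n)^{(|V(F)| - 1)/r}$, as required.

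\textbf{Main obstacle.} The technical heart of the argument lies in justifying the per-step bound $N(K_{1,r}, G_n)^{u_k/r}$ (a non-integer power of a star count), particularly in the sub-case where the center $c_k$ is new at step $k$: then the degree factor attaches to a previously placed old leaf of $S_k$, whose image need not lie in the high-degree part of $G_n$. Choosing the ordering of stars to prioritize those whose center is already in $F_{k-1}$ whenever possible, and applying H\"older's inequality with exponents calibrated to $u_k$, appears to be the right way to keep each step's contribution at exactly $u_k/r$ in the exponent so that the total telescopes correctly.
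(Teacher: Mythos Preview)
Your reduction to connected components is exactly what the paper does. For the connected case, however, the paper's argument is far simpler than the H\"older-based scheme you outline, and it sidesteps entirely the ``main obstacle'' you flag.

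The paper's connected-case argument is this: a connected component $F_a$ contains at least one $r$-star, so embed that star first (at most $N(K_{1,r},G_n)$ choices up to a constant), and then place the remaining $|V(F_a)|-r-1$ vertices one at a time. Because $F_a$ is connected, each new vertex is adjacent to an already-placed vertex, so it has at most $\Delta(G_n)$ choices, where $\Delta(G_n)$ is the maximum degree. This gives
\[
N(F_a,G_n)\ \lesssim_{F_a,r}\ N(K_{1,r},G_n)\,\Delta(G_n)^{\,|V(F_a)|-r-1}.
\]
Then one uses the single inequality $\Delta(G_n)^r \lesssim_r \binom{\Delta(G_n)}{r}\le N(K_{1,r},G_n)$ to convert the $\Delta$-powers into $N(K_{1,r},G_n)^{1/r}$-powers, yielding $N(F_a,G_n)\lesssim_{F_a,r} N(K_{1,r},G_n)^{(|V(F_a)|-1)/r}$ directly.

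In other words, the crude bound ``every new vertex costs at most $\Delta(G_n)$'' already suffices; there is no need to track per-step exponents $u_k$, no need for H\"older, and no need to worry about whether the center of the next star is old or new. Your proposed route is not wrong in spirit, but the step you yourself label the technical heart (justifying the per-step factor $N(K_{1,r},G_n)^{u_k/r}$ via H\"older when the center is new) is left unresolved, and the paper shows that this entire layer of difficulty is avoidable by passing through the maximum degree instead of degree-power sums.
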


\begin{proof}
Let $F_1, F_2, \dots , F_{\nu(F)}$ denote the connected components of $F$. Clearly, $F_i$ contains an $r$-star for each $1\leq a \leq \nu(F)$. Hence, for every $1\leq a \leq \nu(F)$, 
\begin{equation}\label{eq:count2}
N(F_a,G_n) \lesssim_{F_a,r} N(K_{1,r}, G_n)\left(\Delta(G_n)\right)^{|V(F_a)|-r-1}, 
\end{equation}
where $\Delta(G_n)$ is the maximum degree in $G_n$. On the other hand, 
$$N(K_{1,r},G_n) = \sum_{v\in V(G_n)} \binom{d_v}{r}\geq \binom{\Delta(G_n)}{r} {~}_{r}\gtrsim \Delta(G_n)^r.$$
This implies that $(\Delta(G_n))^{|V(F_a)|-r-1} \lesssim_r N(K_{1,r},G_n)^{\frac{|V(F_a)|-1}{r}-1}$, and from  \eqref{eq:count2}, 
\begin{equation}\label{percomponent}
N(F_a,G_n) \lesssim_{F_a,r} N(K_{1,r},G_n)^{\frac{|V(F_a)|-1}{r}}.
\end{equation}
Since \eqref{percomponent} is true for every $1\leq a \leq \nu(F)$, 
$$N(F,G_n) \leq \prod_{a=1}^{\nu(F)} N(F_a,G_n) \lesssim_{F,r} N(K_{1,r},G_n)^{\sum_{a=1}^{\nu(F)}\frac{|V(F_a)|-1}{r}} = N(K_{1,r},G_n)^{\frac{|V(F)|-\nu(F)}{r}},$$
completing the proof. 
\end{proof}

\subsection{Counterexample when $H$ is not a star-graph}\label{sec:Hexample} In this section, we construct a graph sequence $G_n(H)$ such that $T(H, G_n(H)) \dto \dPois(\lambda)$, but \eqref{eq:expvar} does not hold, whenever $H$ is connected and is not a star-graph.

\begin{figure}[h]
\centering
\begin{minipage}[l]{1.0\textwidth}
\centering
\includegraphics[width=3.25in]
    {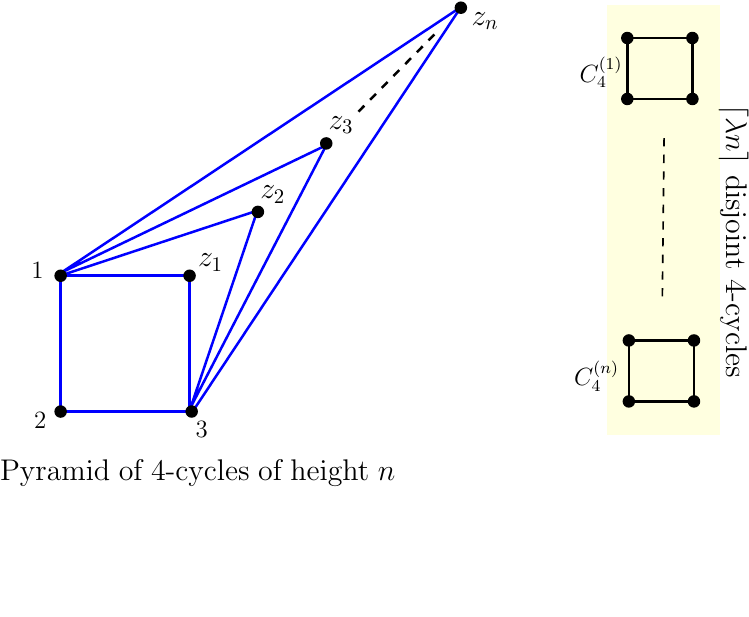}\\
\vspace{-0.5in}
\end{minipage}
\caption{\small{Illustration showing Poisson convergence does not imply convergence of moments, when $H=C_4$.}}
\label{fig:example_moment}
\end{figure}

\begin{defn} Fix an integer $n \geq 1$. Let $H_1, H_2, \ldots, H_n$ be isomorphic copies of $H$, with  $V(H)=\{1, 2, \ldots, |V(H)|-1, |V(H)|\}$ and $H_a=(V(H_a), E(H_a))$, such that $V(H_a)=\{1, 2, \ldots, |V(H)|-1, z_a\}$, where $\phi(v)=v$, for $v \in [|V(H)|-1]$, and $\phi(|V(H)|)=z_a$, is an isomorphism of $H$ and $H_a$, for $a \in [n]$. Define {\it the pyramid of $H$ of height $n$} as follows: 
$$\cP_n(H)=\left(\bigcup_{a=1}^n V(H_a), \bigcup_{a=1}^n  E(H_a) \right).$$ 
\end{defn}

Let $G_n(H)$ be the disjoint union of  $\cP_n(H)$ and $\ceil{\lambda n}$ disjoint copies of $H$. (Figure \ref{fig:example_moment} illustrates this construction when $H=C_4$ is the 4-cycle). 

\begin{lem}\label{lm:Hmoment} Suppose $H$ is connected and is not a star-graph. Let $\cP_n(H)$ be a pyramid of $H$ of height $n$, as defined above. Then every copy of $H$ in $\cP_n(H)$ passes through at least two vertices in $\{1, 2, \ldots, |V(H)|-1\}$.
\end{lem}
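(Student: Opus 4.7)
The plan is to exploit the fact that the "apex" vertices $z_1,\dots,z_n$ of the pyramid form an independent set: by construction, the only edges incident to any $z_a$ connect $z_a$ to vertices in the core $C := \{1, 2, \ldots, |V(H)|-1\}$, so there are no edges between distinct $z_a$ and $z_b$. Consequently, any copy of $H$ sitting in $\cP_n(H)$ that uses $k$ apex vertices automatically contains an independent set of size $k$.

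Suppose, for contradiction, that some copy $H'$ of $H$ in $\cP_n(H)$ uses at most one vertex from $C$. Then $H'$ uses at least $|V(H)| - 1$ apex vertices, and by the observation above, $H$ must contain an independent set of size $|V(H)|-1$. I would then argue that this forces $H$ to be a star, contradicting the hypothesis. Indeed, if $I \subseteq V(H)$ is an independent set of size $|V(H)|-1$ and $v^\star$ is the unique vertex outside $I$, then every edge of $H$ has at least one endpoint in $V(H)\setminus I = \{v^\star\}$, i.e.\ every edge of $H$ is incident to $v^\star$. Since $H$ is connected and has no isolated vertices, $v^\star$ is adjacent to every vertex of $I$, so $H = K_{1,|V(H)|-1}$, a star-graph. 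This contradicts the assumption on $H$, completing the proof.

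The argument is short; the only subtlety worth flagging is confirming that the $z_a$'s genuinely form an independent set in $\cP_n(H)$. This is immediate from the definition, since $E(\cP_n(H)) = \bigcup_{a=1}^n E(H_a)$ and each $E(H_a)$ only involves edges inside $\{1,\dots,|V(H)|-1, z_a\}$, so no edge connects two distinct apex vertices. With that in hand, the independence-number characterization of stars closes the argument with no further calculation.
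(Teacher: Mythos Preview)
Your proof is correct and follows essentially the same approach as the paper: both arguments hinge on the observation that the apex vertices $\{z_1,\dots,z_n\}$ form an independent set, so a copy of $H$ using at most one core vertex would force $H$ to have an independent set of size $|V(H)|-1$, hence to be a star. Your write-up is slightly more explicit in spelling out why a connected graph with independence number $|V(H)|-1$ must equal $K_{1,|V(H)|-1}$, but the logic is identical.
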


\begin{proof} Since $\{z_1, z_2, \ldots, z_n\}$ is an independent set, by construction, and $H$ is connected, every copy of $H$ in $\cP_n(H)$ must pass through at least $1$ vertex in $\{1, 2, \ldots, |V(H)|-1\}$. Suppose there exists a copy of $H$ in $\cP_n(H)$ which passes through exactly $1$ vertex (say $k$) in $\{1, 2, \ldots, |V(H)|-1\}$. Then every other vertex of $H$ belongs to  the set $\{z_1, z_2, \ldots, z_n\}$. However, $\{z_1, z_2, \ldots, z_n\}$ is an independent set and, therefore, any non-empty connected subgraph of $\cP_n(H)$ with vertices in $\{z_1, z_2, \ldots, z_n, k\}$ will be a star-graph, which contradicts the assumption of  the lemma. 
\end{proof}

Now, choose  $c_n=n^{\frac{1}{|V(H)|-1}}$. By the above lemma, 
\begin{align*}
\P(T(H, \cP_n(H)) >0)&=\P(\text{at least two vertices in } \{1, 2, \ldots, |V(H)|-1\} \text{ have the same color}) \nonumber \\
&\leq \frac{{|V(H)|-1 \choose 2}}{c_n} \rightarrow 0,
\end{align*}
as $n \rightarrow \infty$. Therefore, $T(H, \cP_n(H)) \pto 0$. However, the number of monochromatic $H$ in $\ceil{\lambda n}$ disjoint copies of $H$ follows $\dBin(\ceil{\lambda n}, \frac{1}{c_n^{|V(H)|-1}})=\dBin(\ceil{\lambda n}, \frac{1}{n})$, which converges to $\dPois(\lambda)$, as $n \rightarrow \infty$. Therefore, $$T(H, G_n(H)) \dto \dPois(\lambda).$$ 

On the other hand, note that $N(H, G_n(H))=N(H, \cP_n(H))+\ceil{\lambda n}$. Then using $N(H, \cP_n(H)) \geq n$, gives $\E T(H, G_n(H))=\frac{1}{c_n^{|V(H)|-1}} N(H, G_n) \geq \frac{\ceil{\lambda n}+n}{c_n^{|V(H)|-1}} \rightarrow \lambda+1$, that is, \eqref{eq:rcondition} does not hold. 

\section{Applications of Theorem \ref{thm:poisson}}
\label{sec:examples}

In this section we apply Theorem \ref{thm:poisson} in various examples: (1) monochromatic subgraphs in the  Erd\H os-R\'enyi random graph (Section \ref{sec:erpf}), (2) monochromatic cliques in general graphs (Section \ref{sec:mcg}), and (3) connections to the birthday paradox (Section \ref{sec:birthday}).

\subsection{Monochromatic Subgraphs in Erd\H os-R\'enyi Random Graphs} 
\label{sec:erpf}

Theorem \ref{thm:poisson} can be easily extended to random graphs, when the limits in \eqref{eq:expvar} hold in probability, when the graph and its coloring are jointly independent. This is explained in the following lemma, using which we prove Theorem \ref{thm:balanced} and Theorem \ref{thm:unbalanced}, in Section \ref{sec:pferdosrenyi}. 

\begin{lem}\label{lm:random}
Let $\{G_n\}_{n\ge 1}$ be a sequence of random graphs independent of the coloring distribution $(X_1,\cdots, X_{|V(G_n)|})$ such that
$$\E(T(H,G_n)|G_n)\stackrel{P}{\rightarrow}\lambda, \quad \Var(T(H,G_n)|G_n)\stackrel{P}{\rightarrow}\lambda.$$
Then $T(H,G_n)\stackrel{D}{\rightarrow}\dPois(\lambda)$.
\end{lem}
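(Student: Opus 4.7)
The plan is to reduce to Theorem \ref{thm:poisson} by conditioning on the random graph $G_n$ and invoking the subsequence characterization of convergence in distribution. Recall that $Y_n \dto Y$ if and only if every subsequence of $\{Y_n\}$ admits a further subsequence along which $Y_n \dto Y$. Thus it suffices to show: given any subsequence $\{n_k\}$, there is a further subsequence $\{m_j\}\subseteq\{n_k\}$ along which $T(H, G_{m_j}) \dto \dPois(\lambda)$.

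For the subsequence extraction, use the fact that convergence in probability passes to almost-sure convergence along a subsequence: starting from $\{n_k\}$, first extract a subsequence along which $\E(T(H,G_n)|G_n) \to \lambda$ a.s., then from it a further subsequence $\{m_j\}$ along which also $\Var(T(H,G_n)|G_n) \to \lambda$ a.s. On the almost-sure event $\Omega_0$ where both limits hold, fix a realization of the graph sequence $\{g_{m_j}\}_{j \geq 1}$. Since $G_n$ is independent of the coloring $(X_1,\dots,X_{|V(G_n)|})$, the conditional distribution of $T(H,G_n)$ given $G_n=g_n$ is exactly that of $T(H,g_n)$ computed from a uniform $c_n$-coloring of the deterministic graph $g_n$. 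Hence the hypotheses of Theorem \ref{thm:poisson} are satisfied pathwise on $\Omega_0$, and applying that theorem gives $T(H,G_{m_j}) \dto \dPois(\lambda)$ conditionally on $\{G_{m_j}\}$.

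To pass from conditional to unconditional convergence, let $F_\lambda$ be the CDF of $\dPois(\lambda)$ (which is continuous at every non-integer $t$), and write $F_n(t) := \P(T(H,G_n) \leq t \mid G_n)$. The previous step gives $F_{m_j}(t) \to F_\lambda(t)$ almost surely for every continuity point $t$ of $F_\lambda$. Since $0 \leq F_{m_j}(t) \leq 1$, the bounded convergence theorem yields
$$\P(T(H,G_{m_j}) \leq t) \;=\; \E\,F_{m_j}(t) \;\longrightarrow\; F_\lambda(t),$$
so $T(H,G_{m_j}) \dto \dPois(\lambda)$. Applying the subsequence principle then gives $T(H,G_n) \dto \dPois(\lambda)$ along the full sequence, completing the proof.

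The argument is essentially routine given Theorem \ref{thm:poisson}. The only point requiring care is the simultaneous almost-sure extraction for both conditional moments (so that Theorem \ref{thm:poisson}'s deterministic hypothesis \eqref{eq:expvar} can be verified pathwise), and the observation that independence of $G_n$ and the coloring is what legitimizes conditioning on $G_n$ and reading off the conditional law of $T(H,G_n)$ as that of the monochromatic-subgraph count on a deterministic graph.
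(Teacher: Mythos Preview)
Your proof is correct and, like the paper's, reduces the statement to Theorem \ref{thm:poisson} by conditioning on the random graph. The technical route differs, however. The paper argues directly: it chooses $\varepsilon_n\downarrow 0$ with $\P(A_n)\to 0$ for $A_n=\{G_n:\max(|\E(T\mid G_n)-\lambda|,|\Var(T\mid G_n)-\lambda|)>\varepsilon_n\}$, bounds $|\E h(T(H,G_n))-\E h(\dPois(\lambda))|$ by a vanishing term plus $\sup_{G_n\in A_n^c}|\E(h(T)\mid G_n)-\E h(\dPois(\lambda))|$, and then shows the supremum tends to zero by contradiction (a surviving sequence would be a deterministic counterexample to Theorem \ref{thm:poisson}). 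Your argument instead passes to an almost-sure subsequence, applies Theorem \ref{thm:poisson} pathwise, and uses bounded convergence on the conditional CDFs together with the subsequence principle. Both are standard derandomization devices; your version is perhaps a bit more self-contained (no $\varepsilon_n$ or contradiction step), while the paper's avoids iterated subsequence extraction and is marginally shorter.
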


\begin{proof} The given hypothesis implies the existence of positive reals $\varepsilon_n \rightarrow 0$, such that
$$\lim_{n\rightarrow\infty}\P(A_n)=0,\quad A_n:=\left\{G_n: \max\{|\E(T(H,G_n)|G_n)-\lambda|,|\Var(T(H,G_n)|G_n)-\lambda|)>\varepsilon_n\}\right\}.$$
Thus, given any function $h:\Z_{+} \cup \{0 \} \mapsto [0,1]$ 
\begin{align*}
|\E h(T(H,G_n))-\E h(\dPois(\lambda))|\le \P(A_n) +\sup_{G_n\in A_n^c} |\E(h(T(H,G_n))|G_n)-\E h(\dPois(\lambda))|.
\end{align*}
It thus suffices to prove that the second term in the RHS above converges to $0$. If not, there exists a deterministic sequence of graphs $\{G_n'\}_{n\ge 1}$ such that $\E (T(H,G_n'))$ and $\Var(T(H,G_n'))$ both converge to $\lambda$, but $T(H,G_n')$ does not converge to $\dPois(\lambda)$, a contradiction to Theorem \ref{thm:poisson}.
\end{proof}

\subsubsection{\textbf{Proofs of Theorem \ref{thm:balanced} and Theorem \ref{thm:unbalanced}}}\label{sec:pferdosrenyi}

We begin with some preliminary properties of the exponent $\gamma(H)$ (recall \eqref{eq:expp}). 

\begin{lem}\label{lm:egamma}Let $H$ be a connected graph. Then the following hold:
\begin{enumerate} 
\item[(a)] If $H$ is unbalanced, then $\gamma(H)$ is well-defined, and $0< \gamma(H) < \frac{1}{m(H)}$, where $m(H)$ is defined in \eqref{eq:mH}. Furthermore, every minimizer of \eqref{eq:expp} is an induced sub-graph $H_1$ of $H$. 

\item[(b)] If $H$ is balanced, but not strictly balanced, then $\gamma(H) = \frac{1}{m(H)}$.   
\end{enumerate}
\end{lem}

\noindent{\it Proof}. Throughout, we assume $H$ is connected. Then we have the following two cases: 
\begin{enumerate}
\item[(a)] Suppose $H$ is unbalanced. Then there exists $H_1\subset H$ non-empty such that
$$\frac{|E(H_1)|}{|V(H_1)|}>\frac{|E(H)|}{|V(H)|}\Leftrightarrow |E(H_1)||V(H)|-|E(H)||V(H_1)|>0.$$
For this $H_1$, 
\begin{align}\label{eq:gammaH_positive} 
|E(H_1)|& (|V(H)|-1)-|E(H)|(|V(H_1)|-1) \nonumber \\
&=|E(H_1)||V(H)|-|E(H)||V(H_1)|+|E(H)|-|E(H_1)|>0. 
\end{align}
Thus, the minimum in definition of $\gamma(H)$ (recall  \eqref{eq:expp}) is not over an empty set, which means $\gamma(H)$ is well-defined. Moreover, as the minimum is taken over finitely many positive items, $\gamma(H)>0$.

Next, suppose $H_1\subset H$ such that $m(H)=\frac{|E(H_1)|}{|V(H_1)|}$. To show $\gamma(H) < \frac{1}{m(H)}$ it suffices to show that 
\begin{align*}
\frac{|V(H)|-|V(H_1)|}{|E(H_1)|(V(H)-1)-|E(H)|(|V(H_1)|-1)} < \frac{|V(H_1)|}{|E(H_1)|},
\end{align*}
which is equivalent to $|E(H_1)||V(H)|(|V(H_1)|-1)-|V(H_1)| |E(H)| (|V(H_1)|-1) >0$, that is, $|E(H_1)||V(H)| -|E(H)||V(H_1)| >0$, which holds since $H$ is unbalanced. 

Finally, observe that, for fixed $|V(H_1)|$, the RHS in \eqref{eq:expp} is decreasing in $|E(H_1)|$, which implies that every minimizer of \eqref{eq:expp} is an induced subgraph $H_1$ of $H$.
\\

\item[(b)] Now, suppose $H$ balanced, but not strictly balanced. Then there exists a proper subgraph $H_1$ of $H$ such that $m(H)=\frac{|E(H_1)|}{|V(H_1)|}=\frac{|E(H)|}{|V(H)|}$. Then this $H_1$ satisfies \eqref{eq:gammaH_positive}, and, therefore $\gamma(H)$ is well defined, positive, and satisfies
\begin{align}\label{eq:mh_I}
\gamma(H) \leq \frac{|V(H)|-|V(H_1)|}{|E(H_1)|(|V(H)|-1)-|E(H)|(|V(H_1)|-1)} = \frac{|V(H_1)|}{|E(H_1)|}=\frac{1}{m(H)}.
\end{align}
Next, we are going to show that if $H'$ is a subgraph of $H$, such that $
|E(H')|(|V(H)|-1)-|E(H)|(|V(H')|-1)>0$, then 
\begin{align}\label{eq:mh_II}
\frac{|V(H)|-|V(H')|}{|E(H')|(|V(H)|-1)-|E(H)|(|V(H')|-1)} \geq \frac{|V(H)|}{|E(H)|}=\frac{1}{m(H)}.
\end{align}
This is equivalent to showing $|V(H')||E(H)|(|V(H)|-1)-|V(H)||E(H')|(|V(H)|-1) \geq 0$, which follows by noting that $|V(H')||E(H)| \geq |V(H)| |E(H')|$, since $H$ is balanced. Combining \eqref{eq:mh_I} and \eqref{eq:mh_II}, it follows that $\gamma(H)=\frac{1}{m(H)}$, for $H$ which is balanced, but not strictly balanced. \hfill $\Box$ \\ 
\end{enumerate}

\noindent{\textit{Proof of Theorem \ref{thm:unbalanced}(a)}}: Consider the subgraph $H_1$ of $H$ such that the minimum in \eqref{eq:expp} is attained, that is, $$\gamma(H)=\frac{|V(H)|-|V(H_1)|}{|E(H_1)|(|V(H)|-1)-|E(H)|(|V(H_1)|-1)}.$$ Note that  $\P(T(H, G_n)>0) \leq  \P(T (H_1, G_n)>0)  \leq \E(T(H_1, G_n))$. Therefore, 
 \begin{align}
 \P(T(H, G_n)>0) & \leq \E(T(H_1, G_n)) \nonumber \\
 & \lesssim_{H} \frac{n^{|V(H_1)|} p^{|E(H_1)|}}{c_n^{|V(H_1)|-1}}\nonumber\\
 &\lesssim_{H} \frac{n^{|V(H_1)|} p^{|E(H_1)|}}{n^{\frac{|V(H)| (|V(H_1)|-1) }{|V(H)|-1}} p^{\frac{|E(H)| (|V(H_1)|-1)}{|V(H)|-1}}} \tag*{ (using $c_n=\Theta(n^{\frac{|V(H)|}{|V(H)|-1}} p^{\frac{|E(H)|}{|V(H)|-1}})$)}\nonumber \\
& = \left(\frac{n^{|V(H_1)|(|V(H)|-1)} p^{ |E(H_1)|(|V(H)|-1)}}{n^{ |V(H)| (|V(H_1)|-1)} p^{|E(H)| (|V(H_1)|-1)}}\right)^{\frac{1}{|V(H)|-1}} \nonumber \\ 
& = \left( n^{|V(H)|-|V(H_1)|} p^{|E(H_1)|(|V(H)|-1)-|E(H)|(|V(H_1)|-1)}\right)^{\frac{1}{|V(H)|-1}} \label{eq:partb} \\
&=\left( n^{\gamma(H)} p\right)^{\frac{|E(H_1)|(|V(H)|-1)-|E(H)|(|V(H_1)|-1)}{|V(H)|-1}}. \nonumber
\end{align}
Since the RHS above goes to $0$ by assumption, the proof of Theorem \ref{thm:unbalanced}(a) is complete. \\

\noindent{\textit{Proof of Theorem \ref{thm:unbalanced}(b)}}:  
For any integer $r \geq 1$, a direct expansion gives 
$$\E T(H,G_n)^r=\sum_{F\in \sG_r(H)} c_0(F,H) \frac{\E N(F,G_n)}{c_n^{|V(F)|-\nu(F)}}=\sum_{F\in \sG_r(H)}c_1(F,H)\frac{n^{|V(F)|} p^{|E(F)|}}{c_n^{|V(F)|-\nu(F)}},$$
where $c_0(F,H), c_1(F,H)$ are constants free of $n$, and $\sG_r(H)$ is the set of all unlabeled graphs formed by the join of $r$ isomorphic copies of $H$. The convergence of the  moments of $T(H,G_n)$ follows from the lemma below.

\begin{lem}\label{lm:mconv} For $F\in \sG_r(H)$, define $\eta_n(F):=\frac{1}{c_n^{|V(F)|-\nu(F)}} n^{|V(F)|} p^{|E(F)|}$. Then $\eta(F):=\lim_{n \rightarrow \infty}\eta_n(F)$ exists. 
\end{lem}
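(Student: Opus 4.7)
The plan is to show that $\eta_n(F) = C_F \cdot n^{\alpha(F)}(1+o(1))$ for an explicit constant $C_F > 0$ and an exponent $\alpha(F) \leq 0$, so that $\eta(F)$ is either $0$ (if $\alpha(F) < 0$) or $C_F$ (if $\alpha(F) = 0$). First, I would substitute the two asymptotics $c_n^{|V(H)|-1} \sim C_H \cdot n^{|V(H)|} p^{|E(H)|}$ (which follows from \eqref{expcopy}) and $p(n) = \kappa n^{-\gamma(H)}(1+o(1))$ (the hypothesis of part (b)) into $\eta_n(F) = n^{|V(F)|} p^{|E(F)|}/c_n^{|V(F)|-\nu(F)}$. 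Writing $F_1, \ldots, F_{\nu(F)}$ for the connected components of $F$, each of which is itself a connected join of some $r_i \geq 1$ copies of $H$ with $\sum_i r_i = r$, a direct simplification yields
\begin{equation*}
\alpha(F) = \frac{1}{|V(H)|-1}\sum_{i=1}^{\nu(F)} \Phi(F_i), \quad \Phi(F_i) := |V(H)| - |V(F_i)| - \gamma(H)\bigl[(|V(H)|-1)|E(F_i)| - (|V(F_i)|-1)|E(H)|\bigr].
\end{equation*}
Thus it suffices to prove that $\Phi(F_i) \leq 0$ for each $i$.

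I would prove this inequality by induction on $r_i$. The base case $r_i = 1$ gives $F_i = H$, and $\Phi(H) = 0$ by direct substitution. For the inductive step, since $F_i$ is connected, its $r_i$ underlying copies can be ordered so that each successive copy is joined to the union of the preceding ones on a non-empty vertex set; let $F_i'$ denote the connected join of the first $r_i - 1$ copies, and let $t \in [1, |V(H)|]$ and $s \geq 0$ be the number of vertices and edges, respectively, shared between $F_i'$ and the final attached copy of $H$. A direct calculation then gives
\begin{equation*}
\Phi(F_i) - \Phi(F_i') = t - |V(H)| - \gamma(H)\bigl[(t-1)|E(H)| - s(|V(H)|-1)\bigr].
\end{equation*}

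The main obstacle is to verify that this increment is always non-positive. The shared piece is isomorphic to a subgraph $H_1$ of the newly attached copy of $H$, on $t$ vertices and $s$ edges. When $t < |V(H)|$, so that $H_1$ is a proper subgraph of $H$, the inequality follows from the definition of $\gamma(H)$ in \eqref{eq:expp}: if $s(|V(H)|-1) - (t-1)|E(H)|$ is positive, the defining infimum directly yields $\gamma(H)\bigl[s(|V(H)|-1) - (t-1)|E(H)|\bigr] \leq |V(H)| - t$, which rearranges to the required bound, while if this quantity is non-positive the increment is trivially non-positive, since $t - |V(H)| < 0$ and $\gamma(H) > 0$. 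In the boundary case $t = |V(H)|$, the increment simplifies by direct expansion to $-\gamma(H)(|V(H)|-1)(|E(H)| - s) \leq 0$. Telescoping the induction then yields $\Phi(F_i) \leq 0$ for every $i$, hence $\alpha(F) \leq 0$, completing the proof.
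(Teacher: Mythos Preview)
Your proposal is correct and follows essentially the same approach as the paper: reduce to connected components, induct on the number of copies of $H$ joined, and control the contribution of each newly attached copy via the defining inequality for $\gamma(H)$. The paper presents the induction multiplicatively, writing $\eta_n(F_r)=\eta_n(F_{r-1})\cdot\dfrac{n^{|V(H)|-|V(H_1)|}p^{|E(H)|-|E(H_1)|}}{c_n^{|V(H)|-|V(H_1)|}}$ and showing the second factor converges, whereas you substitute $p\sim\kappa n^{-\gamma(H)}$ from the outset and verify the resulting power-of-$n$ exponent $\alpha(F)$ is nonpositive; these are two packagings of the same computation.
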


\begin{proof} First, note that it suffices to prove the lemma for connected $F$, since in the general case, if $F$ has connected components $F_1',\ldots,F_\nu'$, then $\eta_n(F)=\prod_{i=1}^\nu \eta_n(F_i')$. If $F \in \sG_r(H)$, then each $F_i' \in \sG_r(H)$ too, so convergence of each term in the product will show convergence of $\eta_n(F)$.  We proceed by induction on $r$. For $r=1$, $F=H$, and $\eta_n(F) = \frac{1}{c_n^{|V(H)|-1}} n^{|V(H)|} p^{|E(H)|}  \rightarrow \lambda_0 := \lambda |Aut(H)|$, by the assumption \eqref{expcopy}. Now, suppose the result holds for all connected $F\in \sG_{r-1}(H)$, and let $F\in \sG_r(H)$ be connected. Then, $F$ is the join of $F_1$ and $F_2$, for some connected $F_1\in \sG_{r-1}(H)$ and  an isomorphic copy $F_2$ of $H$.

Let $H_1$ be a graph with vertex set $V(F_1)\cap V(F_2)$ and edge set $E(F_1)\cap E(F_2)$. We need to show the convergence of 
$$\eta_n(F_r)=\frac{n^{|V(F_r)|}p^{|E(F_r)|}}{c_n^{|V(F_r)|-\nu(F_r)}}=\frac{n^{|V(F_{1})|} p^{|E(F_{1})|}}{c_n^{|V(F_{1})|-1}}\times \frac{n^{|V(H)|-|V(H_1)|} p^{|E(H)|-|E(H_1)|}}{c_n^{|V(H)|-|V(H_1)|}}.$$ The first term in the RHS above converges by induction hypothesis. For the second term, using  \eqref{expcopy} gives 
\begin{align}\label{eq:partbn}
&\frac{n^{|V(H)|-|V(H_1)|} p^{|E(H)|-|E(H_1)|}}{c_n^{|V(H)|-|V(H_1)|}}  \nonumber \\
& =(1+o(1))\lambda_0 \frac{n^{-|V(H_1)|}p^{-|E(H_1)|}}{c_n^{-(|V(H_1)|-1)}}  \nonumber \\
& =(1+o(1)) \lambda_0^{\frac{|V(H)|-|V(H_1)|}{|V(H)|-1}} \left( n^{|V(H)|-|V(H_1)|} p^{|E(H_1)|(|V(H)|-1)-|E(H)|(|V(H_1)|-1)}\right)^{-\frac{1}{|V(H)|-1}} ,
 \end{align}
which converges to $\lambda_0^{\frac{|V(H)|-|V(H_1)|}{|V(H)|-1}} \kappa^{-\frac{|E(H_1)|(|V(H)|-1)-|E(H)|(|V(H_1)|-1)}{|V(H)|-1}}\in (0,\infty)$, when $H_1$ attains the minimum in \eqref{eq:expp}, since $n^{\gamma(H)} p \rightarrow \kappa \in (0,\infty)$.  Otherwise,
$$n^{\frac{|V(H)|-|V(H_1)|}{|E(H_1)|(|V(H)|-1)-|E(H)|(|V(H_1)|-1)}} p \gg n^{\gamma(H)}p,$$ and so $\eta_n(F)$ converges to $0$.  Thus, $\eta_n(F)$ converges for all connected graphs $F\in \sG_r(H)$, and the proof of Lemma \ref{lm:mconv} is complete.
\end{proof}

Now, if $T(H, G_n) \dto W$ for some random variable $W$, then $\E T(H, G_n)^r \dto \E W^r$, for any integer $r \geq 1$. Thus, to show that $W$ is not a Poisson distribution, it suffices to prove that
\begin{align}\label{eq:nonpoisson}
\liminf_{n\rightarrow\infty}\Var(T(H,G_n))\ge \liminf_{n\rightarrow\infty} \E \Var(T(H,G_n)|G_n)>\lambda.
\end{align}
Recall from \eqref{eq:varT},  $\Var(T(H,G_n)|G_n)=R_{1,n}+R_{2,n},$ where $\E R_{1,n} \rightarrow \lambda$. Therefore, it suffices to show that $\lim\inf_{n \rightarrow \infty} \E R_{2,n} >0$. To show this, let $H_1$ be the subgraph of $H$ for which the minimum in \eqref{eq:expp} is attained, and $F_0$ be the $|V(H_1)|$-join (note that $|V(H_1)|< |V(H)|$ by Lemma \ref{lm:egamma}) of $H$ and $H'$, where $H'$ is isomorphic to $H$, such that $V(H) \cap V(H')=V(H_1)$ and $E(H) \cap E(H')=E(H_1)$. Moreover, let $$\sJ_{\geq 2}(H) := \bigcup_{t=2}^{|V(H)|} \sJ_t(H),$$
where $\sJ_{t}(H)$ is as in Definition \ref{defn:tjoin}. Then, there exist constants $c_2(F, H)$, such that 
\begin{align}
\E R_{2,n}&= (1+o(1))\sum_{F\in \sJ_{\geq 2}(H)} c_2(F,H) \frac{n^{|V(F)|} p^{|E(F)|}}{c_n^{|V(F)|-1}} \nonumber \\
& \geq (1+o(1)) c_2(F_0,H) \frac{n^{|V(F_0)|} p^{|E(F_0)|}}{c_n^{|V(F_0)|-1}} \nonumber \\
& = (1+o(1)) c_2(F_0,H) \lambda_0^{\frac{2|V(H)|-|V(H_1)|-1}{|V(H)|-1}} \left(n^{\gamma(H)} p\right)^{-\frac{|E(H_1)|(|V(H)|-1)-|E(H)|(|V(H_1)|-1)}{|V(H)|-1}} \nonumber \tag*{(using \eqref{eq:partbn})} \\
& = (1+o(1)) c_2(F_0,H) \lambda_0^{\frac{2|V(H)|-|V(H_1)|-1}{|V(H)|-1}} \kappa^{-\frac{|E(H_1)|(|V(H)|-1)-|E(H)|(|V(H_1)|-1)}{|V(H)|-1}} >0. \nonumber  
\end{align}
This implies \eqref{eq:nonpoisson}, completing the proof of Theorem \ref{thm:unbalanced}(b). \\

\noindent{\textit{Proofs of Theorem \ref{thm:balanced}(a) and Theorem \ref{thm:unbalanced}(c)}}:  Note that, in this regime, $p \gg n^{-\frac{1}{m(H)}}$ (by Lemma \ref{lm:egamma}), which implies $N(H, G_n)=(1+o_P(1)) \E(N(H, G_n))$. Therefore, $$\E (T(H, G_n)|G_n)=\frac{1}{c_n^{|V(H)|-1}} N(H, G_n)=(1+o_P(1))\frac{1}{c_n^{|V(H)|-1}} \E N(H, G_n) =(1+o_P(1))\lambda,$$ by assumption \eqref{expcopy}. Therefore, by Lemma \ref{lm:random}, it suffices to check that $\Var(T(H, G_n)|G_n) \pto \lambda$, which is equivalent to $N(F, G_n)=o_P(c_n^{2|V(H)|-t-1})$, for every $F \in \sJ_t(H)\setminus \{H\}$ and $t \in [2, |V(H)|]$. Since, $|V(F)|={2|V(H)|-t}$, it suffices to show that 
\begin{align}\label{eq:FerI}
\E N(F,G_n)=o(c_n^{|V(F)|-1}), \quad \text{ for all connected } F \ne H,
\end{align}
formed by the join of $H$ and another isomorphic copy $H'$. To this end, define $H_1=(V(H)\cap V(H'), E(H) \cap E(H'))$, which is a (possibly disconnected) subgraph of $H$. Then $|V(F)|=2|V(H)|-|V(H_1)|$, $|E(F)|=2|E(H)|-|E(H_1)|$, and 
\begin{align}
\frac{\E(N(F, G_n))}{c_n^{|V(F)|-1}}  & \lesssim_F \frac{n^{|V(F)|}p^{|E(F)|}}{c_n^{|V(F)|-1}} \nonumber \\
&= \frac{n^{2|V(H)|-|V(H_1)|} p^{2|E(H)|-|E(H_1)|}}{c_n^{2|V(H)|-|V(H_1)|-1}} \nonumber \\
&\lesssim \lambda_0^2(1+o(1))\frac{n^{-|V(H_1)|} p^{-|E(H_1)|}}{c_n^{-|V(H_1)|+1}} \tag*{(using \eqref{expcopy})}  \nonumber \\ 
\label{eq:FerII}&\lesssim\frac{c_n^{|V(H_1)|-1}}{n^{|V(H_1)|}p^{|E(H_1)|}}.
\end{align}
Therefore, to establish \eqref{eq:FerI}, it suffices  to verify that the RHS above goes to zero, as $n \rightarrow \infty$, for every connected $F \ne H$ formed by the join of two isomorphic copies of $H$. 

Now, using $c_n=\Theta(n^{\frac{|V(H)|}{|V(H)|-1}} p^{\frac{|E(H)|}{|V(H)|-1}})$, as in \eqref{eq:partb}, the RHS of \eqref{eq:FerII} becomes 
$$ \left( n^{|V(H)|-|V(H_1)|} p^{|E(H_1)|(|V(H)|-1)-|E(H)|(|V(H_1)|-1)}\right)^{-\frac{1}{|V(H)|-1}}.$$ Therefore, it suffices to show that 
\begin{align}\label{eq:partb2}
n^{|V(H)|-|V(H_1)|} p^{|E(H_1)| (|V(H)|-1)-|E(H)|(|V(H_1)|-1)}\rightarrow\infty.
\end{align}

Now, depending on whether $H$ is balanced or not, we consider two cases: 

\begin{itemize}

\item {\it $H$ is balanced}: In this case, $n^{-\frac{|V(H)|}{|E(H)|}} \ll p \ll 1$. Using $\frac{|E(H_1)|}{|V(H_1)|}\le \frac{|E(H)}{|V(H)|}$, the LHS of \eqref{eq:partb2} becomes 
$$n^{|V(H)|-|V(H_1)|} p^{|E(H_1)| (|V(H)|-1)-|E(H)|(|V(H_1)|-1)} \geq \left(n p^{\frac{|E(H)|}{|V(H)|}}\right)^{|V(H)|-|V(H_1)|},$$
which implies \eqref{eq:partb2}, whenever $|V(H_1)|<|V(H)|$, since $n p^{\frac{|E(H)|}{|V(H)|}} \rightarrow \infty$ by assumption. 

Otherwise, assume $|V(H_1)|=|V(H)|$, in which case the LHS of \eqref{eq:partb2} becomes\\ $ p^{(|E(H_1)| -|E(H)|)(|V(H)|-1)} \rightarrow \infty$, whenever $|E(H_1)|<|E(H)|$, since $p \rightarrow 0$. Finally, note that $|V(H_1)|=|V(H)|$ and $|E(H_1)|=|E(H)|$, implies $F=H$ which is impossible, by assumption. Therefore, \eqref{eq:FerI} holds, and by Lemma \ref{lm:random}, $T(H, G_n) \dto \dPois(\lambda)$, completing the proof of Theorem \ref{thm:balanced}(a). 

\item {\it $H$ is unbalanced}: In this case, $n^{-\gamma(H)} \ll p \ll 1$. 
\begin{itemize}
\item If $|E(H_1)| (|V(H)|-1)-|E(H)|(|V(H_1)|-1) < 0$, then \eqref{eq:partb2} is obvious, since $p:=p(n) \rightarrow 0$ and $|V(H_1)|\leq |V(H)|$.

\item If $|E(H_1)| (|V(H)|-1)-|E(H)|(|V(H_1)|-1) > 0$ (this implies $|V(H)|>|V(H_1)|$), then by the definition of $\gamma(H)$ (see \eqref{eq:expp}), $$\gamma(H)\le \frac{|V(H)|-|V(H_1)|}{|E(H_1)| (|V(H)|-1)-|E(H)|(|V(H_1)|-1)},$$ and so
\begin{align*}
n^{|V(H)|-|V(H_1)|} & p^{|E(H_1)| (|V(H)|-1)-|E(H)|(|V(H_1)|-1)} \nonumber \\ 
& \ge n^{|V(H)|-|V(H_1)|} p^{\frac{|V(H)|-|V(H_1)|}{\gamma(H)}} \nonumber \\ 
& =(n^{\gamma(H)}p)^{\frac{|V(H)|-|V(H_1)|}{\gamma(H)}},
\end{align*}
which implies \eqref{eq:partb2}, since $n^{\gamma(H)}p\rightarrow\infty$ by assumption.

\item If $|E(H_1)| (|V(H)|-1)-|E(H)|(|V(H_1)|-1)=0$, but $|V(H_1)|< |V(H)|$, then again  \eqref{eq:partb2} is obvious. Otherwise, $$|V(H_1)|=|V(H)| \text{ and } |E(H_1)| (|V(H)|-1)-|E(H)|(|V(H_1)|-1) =0.$$  
This implies $|E(H_1)|=|E(H)|$, and hence, $H = F$, which is impossible, by assumption. 
\end{itemize}
\end{itemize}
This implies \eqref{eq:FerI}, and hence by Lemma \ref{lm:random}, $T(H, G_n) \dto \dPois(\lambda)$, completing the proof of Theorem \ref{thm:unbalanced}(c).\\

\noindent{\textit{Proofs of Theorem \ref{thm:balanced}(b) and Theorem \ref{thm:unbalanced}(d)}}:  
Finally, if $p(n) := p \in (0, 1)$ is fixed,  $G_n$ converges to the constant graphon $W^{(p)}=p$, and $$\frac{N_{\mathrm{ind}}(F, G_n)}{c_n^{|V(F)|-1}}=\frac{\lambda_0(1+o(1))}{|Aut(F)|} p^{|E(F)|-|E(H)|}(1-p)^{{|V(H)| \choose 2}-|E(F)|},$$ 
for every super-graph $F$ of $H$ with $|V(F)|=|V(H)|$. The results in Theorem \ref{thm:balanced}(b) and Theorem \ref{thm:unbalanced}(d), then follows from Theorem \ref{thm:poisson_linear} and Remark \ref{rm:dense}.

\subsection{Monochromatic Cliques} 
\label{sec:mcg}

Assumption \eqref{eq:expvar} of Theorem \ref{thm:poisson} is equivalent to the conditions:
\begin{itemize}
\item[(1)] $\frac{1}{c_n^{|V(H)|-1}}N(H,G_n)\rightarrow \lambda$, and

\item[(2)]  $N(F, G_n)=o(c_n^{2|V(H)|-t-1})$, for every $F \in \sJ_t(H)\setminus \{H\}$ and $t \in [2, |V(H)|]$. 
\end{itemize}

These conditions simplify considerable when $H=K_s$ is the $s$-clique. To this end, note that, for every $t \in [2, s-1]$, because of the symmetry of $K_s$, all $t$-joins of $K_s$ are isomorphic, that is, $\sJ_t(K_s)=\{J_t(K_s)\}$, where $J_t(K_s)$ is the graph obtained by the superimposition of two isomorphic copies of $K_s$, such that the two vertex sets intersect at exactly $t$ vertices. Therefore, for $t \in [2,s-1]$, condition (2) above simplifies to, 
\begin{align}\label{eq:clique}
N(J_t(K_s), G_n)=o(c_n^{2s-t-1})=o(N(K_s, G_n)^\frac{2s-t-1}{s-1}), \quad  \text{for every} \quad  t \in [2, s-1],
\end{align}
using $\E(T(K_s, G_n))=\frac{1}{c_n^{s-1}} N(K_s, G_n) \rightarrow \lambda$.
Moreover, the set $\sJ_s(K_s) \backslash \{K_s\}$ is empty, and condition (2) above, for the case $t = s$, is trivially true. Therefore, we have the following corollary:

\begin{cor}\label{cor:clique}
$T(K_s, G_n)\dto \dPois(\lambda)$ whenever $\E(T(K_s, G_n))\rightarrow \lambda$ and \eqref{eq:clique} holds. \qed
\end{cor}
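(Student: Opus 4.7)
The proof reduces to checking that the variance hypothesis of Theorem \ref{thm:poisson} holds, since the mean hypothesis is given. My plan is to use the equivalence (spelled out in the bulleted discussion immediately preceding the corollary, and established in the course of the proof of Theorem \ref{thm:poisson}) between \eqref{eq:expvar} and the pair of conditions: (1) $\frac{1}{c_n^{s-1}}N(K_s,G_n)\to\lambda$, and (2) $N(F,G_n)=o(c_n^{2s-t-1})$ for every $t\in[2,s]$ and every $F\in\sJ_t(K_s)\setminus\{K_s\}$. Condition (1) is exactly the mean assumption, so only (2) needs to be verified.

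I would dispose of the case $t=s$ first: any two isomorphic copies of $K_s$ in $G_n$ that share all $s$ vertices must be the same subgraph, because a complete graph is determined by its vertex set. Hence $\sJ_s(K_s)\setminus\{K_s\}=\emptyset$ and condition (2) is vacuous for $t=s$. For $t\in[2,s-1]$, the symmetry of $K_s$ forces any two ordered $t$-tuples of pivots to yield isomorphic join graphs, so $\sJ_t(K_s)=\{J_t(K_s)\}$ is a singleton. Thus condition (2) collapses to $N(J_t(K_s),G_n)=o(c_n^{2s-t-1})$ for each $t\in[2,s-1]$, and rewriting $c_n^{s-1}=(1+o(1))\lambda^{-1}N(K_s,G_n)$ (using the mean assumption) converts the bound to $N(J_t(K_s),G_n)=o\!\left(N(K_s,G_n)^{(2s-t-1)/(s-1)}\right)$, which is precisely \eqref{eq:clique}.

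Putting these two observations together shows that the hypotheses $\E T(K_s,G_n)\to\lambda$ and \eqref{eq:clique} imply condition (2) for all $t\in[2,s]$, hence imply $\Var T(K_s,G_n)\to\lambda$, so Theorem \ref{thm:poisson} delivers $T(K_s,G_n)\dto\dPois(\lambda)$. There is essentially no obstacle: the entire content is the observation that the symmetry of $K_s$ makes $\sJ_t(K_s)$ a singleton for $t<s$ and empty (modulo $K_s$ itself) for $t=s$, which is exactly what allows the general variance condition from Theorem \ref{thm:poisson} to be repackaged as the single-graph count condition \eqref{eq:clique}.
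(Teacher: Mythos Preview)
Your proposal is correct and mirrors exactly the paper's own argument, which is the discussion immediately preceding the corollary (the corollary itself is marked \qed\ with no separate proof). You reproduce the same two observations---that $\sJ_t(K_s)$ is a singleton for $t\in[2,s-1]$ by symmetry of $K_s$, and that $\sJ_s(K_s)\setminus\{K_s\}$ is empty---and deduce that \eqref{eq:clique} together with the mean condition is equivalent to \eqref{eq:expvar}, so Theorem~\ref{thm:poisson} applies.
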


In particular, when $H=K_3$ is the triangle, the above corollary implies, $T(K_3, G_n)\dto \dPois(\lambda)$ whenever $\E(T(K_3, G_n))\rightarrow \lambda$ and $N(\cD, G_n)=o(N(K_3, G_n)^\frac{3}{2})$, where $\cD$ is the {\it diamond}: the 4-cycle with a diagonal. 

\begin{remark}\label{rm:trianglestein} As mentioned before, Theorem \ref{thm:poisson}, and, in particular, Corollary \ref{cor:clique}, does not follow by applying the Stein's method using a generic dependency graph \cite{poisson2,CDM}. For example, let $H=K_3$ be the triangle and denote by $\sX_3$ the set of 3-element subsets of $V(G_n)$ which form a triangle in $G_n$. Then the graph with vertex set $\sX_3$ which puts an edge  between two elements in $\sX_3$ whenever they are non-overlapping, is a valid dependency graph for the collection $(\bm 1\{X_{=\bm s}\})_{\bm s \in \sX_3}$. 
Now, if $\E(T(K_3, G_n))\rightarrow \lambda$, using this dependency graph in \cite[Theorem 15]{CDM}, shows that $T(K_3, G_n) \dto \dPois(\lambda)$, if $$N(\cD, G_n)=o(N(K_3, G_n)^\frac{3}{2}) \quad \text{and} \quad N(\bowtie, G_n)=o(N(K_3, G_n)^\frac{3}{2}),$$ where $\bowtie$ denotes two triangles joined at a vertex. This condition is, in general, stronger than Corollary \ref{cor:clique}: For instance, in the wheel graph on $W_n$ on $n$-vertices,\footnote{The wheel graph $W_n$ has vertex-set $V(W_n):=\{0, 1,2,\ldots, n\}$, and edge-set $E(W_n)=\{ (0,1), (0,2), \ldots, (0,n),$ $(1,2), (2,3), \ldots, (n-1,n), (n,1)\}$.} colored with $c_n$ colors such that $\E(T(K_3, W_n))=\frac{n}{c_n^3}\rightarrow 1$,  it is easy to check that $T(K_3, W_n) \dto \dPois(1)$, but $N(\bowtie, W_n)=\frac{n(n-1)}{2}$, that is, the above dependency graph construction does not work. This is because, unlike the direct moment-based approach, the generic dependency graph construction is unable to leverage the fact that the $\Cov(\bm 1\{X_{=\bm s}\}, \bm 1\{X_{=\bm t}\})=0$, whenever $\bm s, \bm t \in \sX_3$ have 1 vertex index in common. It would be interesting to see whether a more sophisticated dependency graph construction or other versions of Stein's method can be used to prove Theorem \ref{thm:poisson}, and obtain rates of convergence. 
\end{remark}

\subsection{Birthday Problem} 
\label{sec:birthday}

The case $H=K_s$ is the $s$-clique, is of particular interest, because it generalizes the well-known birthday problem to a general friendship network $G_n$. In the birthday problem, $G_n$ is a friendship-network graph where the vertices are colored uniformly with $c_n=365$ colors (corresponding to birthdays). In this case, two friends will have the same birthday whenever the corresponding edge in the graph $G_n$ is monochromatic.  Therefore, $\P(T(K_{s}, G_n)>0)$ is the probability that there is an $s$-{\it fold birthday match}, that is, there are $s$ friends with the same birthday. For example, if the network $G_n$ satisfies \eqref{eq:clique}, Corollary \eqref{cor:clique} implies 
\begin{align}\label{eq:cliquebirthday}
\P(T(K_{s}, G_n)>0) \approx 1-\exp\left(-\frac{N(K_s, G_n)}{c_n^{s-1}}  \right) = p,
\end{align}
from which we can compute the approximate number of people needed to ensure a  $s$-fold birthday match in the network $G_n$, with probability at least $p$. 

\begin{itemize}

\item[--] In the classical birthday problem, the underlying graph $G_n=K_n$ is the complete graph. In this case, $N(K_s, G_n)={n \choose s}$. For example, using $p=\frac{1}{2}$, $s=4$, and $c_n=365$ in \eqref{eq:cliquebirthday}, gives that in any group of approximately 167 people, with probability at least 50\%, there are four friends all having the same birthday. Diaconis and Mosteller \cite{diaconismosteller} considered the following related example: Suppose a friend reports that she, her husband, and their daughter were all born on the same day of the month (say the 16th). Taking $c_n = 30$ (days in a month), $s = 3$, and $p = \frac{1}{2}$, in \eqref{eq:cliquebirthday} gives that among birthdays of 16 people, a triple match in day of the month has about 50\% chance.

\item[--] Another interesting case is birthday coincidences among different types, for example, with two types (boy/girl) one can ask what is the chance there is a boy-girl birthday match among a group of $n$ boys and $n$ girls? More generally, with $s$-types and $n$ objects in each type, an $s$-fold birthday coincidence corresponds to an $s$-clique in the complete $s$-partite graph with $n$ vertices in each part. For example, using  $N(K_3, K_{n, n, n}) =n^3$ and substituting $p=0.5$, $s=3$, $c_n=365$ in the formula gives, in any collection of 3 types (say nationality, for example, American, French, and Indian) of approximately 45 people each, with probability at least 50\%, there is a triple birthday match, that is, an American, a French, and an Indian, have the same birthday.   Asymptotics of collision times among different objects are useful in developing algorithms for the discrete logarithm problem \cite{ghdl}.

\end{itemize}

\subsection*{Acknowledgements} The authors are grateful to the anonymous referee for the detailed and insightful comments, which greatly improved the quality and the presentation of the paper.

\small

\normalsize 

\appendix

\section{The Ordering Lemma}

In this section we prove the ordering lemma used in the proof of Lemma \ref{lm:epscount}. To this end, let $\Omega \subset \N$ be finite and $R \geq 3$ a non-negative integer. A  collection $\bm S=(\bm s_1, \bm s_2, \ldots, \bm s_N)$, where $\bm s_j \in \Omega_R$, for $j \in [N]$, is said to be {\it connected} if there exists an {\it ordering} (permutation) $\sigma: [N] \rightarrow [N] $ such that 
$$\sX_{\bm S}(t, \sigma):=\left|\bar{\bm s}_{\sigma(t)} \bigcap \left(\bigcup_{a=1}^{t-1} \bar{\bm s}_{\sigma(a)} \right) \right|  \geq 1,$$
for every $t \in [2, N]$. 

\begin{lem}\label{connected} Suppose $\bm S = (\bm s_1, \bm s_2, \ldots, \bm s_N) \in \Omega_R^N$ is connected, and $|\bigcup_{j=1}^N \bar{\bm s}_j| <  b R-b+1$, where $b$ is the number of distinct $R$-element sets in the collection $\{\bar{\bm s}_1, \bar{\bm s}_2, \ldots, \bar{\bm s}_N\}$. Then there exists an ordering $\sigma: [R] \rightarrow [R] $ such that the following hold: 
\begin{itemize}
\item[--] $\sX_{\bm S}(t, \sigma)  \geq 1$, for every $t \in [2, N]$, and  
\item[--] $\sX_{\bm S}(t, \sigma) \in [2, R-1]$, for some $t \in [2, N]$. 
\end{itemize}
\end{lem}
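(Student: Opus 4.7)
The plan is to build the ordering $\sigma$ in two phases: first reduce to ordering the $b$ distinct $R$-sets $A_1, \ldots, A_b$ underlying the tuples and establish a counting identity, then use the strict inequality in the hypothesis to engineer a step with overlap in $[2, R-1]$.

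Since $\bm S$ is connected, I fix any connected ordering and identify the $b$ ``new'' positions at which a previously unseen $R$-set first appears; the remaining $N - b$ positions correspond to duplicates, which I place immediately after the first occurrence of their defining set so that each contributes $\sX_{\bm S} = R$ and connectedness is preserved trivially. All analysis then concerns only the new positions. Writing $m_i$ for the value of $\sX_{\bm S}$ at the $i$-th new position and $V = \bigcup_j \bar{\bm s}_j$, a telescoping argument on the union size gives
\[
|V| \;=\; R + \sum_{i=2}^{b}(R - m_i) \;=\; bR - \sum_{i=2}^{b} m_i ,
\]
so $\sum_{i=2}^{b} m_i = bR - |V|$ depends only on the underlying set system, not on the ordering chosen. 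The hypothesis $|V| < bR - b + 1$ thus forces $\sum_{i=2}^{b} m_i > b - 1$, and since every $m_i \geq 1$ by connectedness, pigeonhole yields at least one $m_i \geq 2$.

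It remains to guarantee that this large overlap is also at most $R - 1$. I split into cases according to pairwise intersections of the distinct sets. If some pair $A_p, A_q$ already satisfies $|A_p \cap A_q| \in [2, R-1]$, I place them as the first two new sets and extend to a connected ordering of the rest via BFS on the (connected by hypothesis) intersection graph, yielding $m_2 = |A_p \cap A_q| \in [2, R-1]$ immediately. Otherwise every pair of distinct sets intersects in $0$ or $1$ element; in this ``linear'' regime the strict hypothesis rules out the sunflower-type configuration, which would saturate $|V| = bR - b + 1$. Hence at least one set $A_{\pi(i^*)}$ must be multiply covered by earlier sets in any naive BFS ordering (so $m_{i^*} = R$); I then relocate $A_{\pi(i^*)}$ to an earlier position right after a single set it intersects, producing overlap $1$ at its new position while forcing some subsequent covering set to accumulate overlaps from two distinct elements and so yielding a step with $m \in [2, R-1]$. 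Finally, the duplicates are slotted after the first appearance of their defining set, producing the desired ordering of $\bm S$.

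The main technical obstacle is precisely this last relocation step in the linear case: one must simultaneously preserve connectedness along the re-ordered sequence, avoid creating new fully-subsumed steps, and exploit the strict inequality in the hypothesis to exclude all locally pathological sub-configurations. The ordering-independent counting identity $\sum_{i=2}^{b} m_i = bR - |V|$ is the key tool that drives the verification once the right relocation is chosen.
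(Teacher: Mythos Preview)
Your reduction to the $b$ distinct $R$-sets and the ordering-invariant identity $\sum_{i=2}^{b} m_i = bR - |V|$ are cleaner than the paper's setup, which works directly on $\bm S$ and never isolates this identity. The paper also does not split on pairwise overlap sizes; instead it takes any connected ordering whose overlaps all lie in $\{1,R\}$, locates the first index $t_0$ at which a \emph{new} set is nonetheless fully covered, finds two earlier indices $t_1,t_2$ each meeting $\bar{\bm s}_{\sigma(t_0)}$ in a single distinct point, and inserts $\bar{\bm s}_{\sigma(t_0)}$ right after position $t_2$ to force overlap exactly $2$. Your Case~1 shortcut is fine via BFS on the intersection graph.

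Case~2 has a genuine gap. The step ``multiply covered \ldots\ (so $m_{i^*}=R$)'' does not follow: pigeonhole only gives some $m_{i^*}\ge 2$, and $m_{i^*}\in[2,R-1]$ is perfectly possible in the linear regime (e.g.\ $R=3$ with $A_1=\{1,2,3\}$, $A_2=\{3,4,5\}$, $A_3=\{5,6,1\}$ gives $m_3=2$), in which case you are already done; that sub-case must be stated. More seriously, the relocation is asserted but not verified. Here is how your own identity closes it. Choose $i^*$ minimal with $m_{i^*}=R$, so $m_k=1$ for all $k<i^*$. Since each earlier set meets $A_{\pi(i^*)}$ in at most one point, at least $R$ earlier sets meet it, so the least such index $j$ satisfies $j\le i^*-R$; inserting $A_{\pi(i^*)}$ immediately after $A_{\pi(j)}$ gives overlap exactly $1$ there (nothing before $j$ meets $A_{\pi(i^*)}$) and preserves connectedness, as later running unions only grow. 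Crucially, in the linear regime inserting one set raises each later overlap by at most $1$, so every $m'_k\in\{m_k,m_k+1\}$. By invariance of $\sum m_i$, the total rise over $k\in[j{+}1,i^*{-}1]$ equals the drop $R-1$ at the relocated step, hence exactly $R-1$ of those overlaps move from $1$ to $2$; with $R\ge 3$ (implicit throughout, since $H\neq K_2$) this yields a step with overlap in $[2,R-1]$. Once these two points are written out, your argument is complete and amounts to the paper's $t_0,t_1,t_2$ construction reorganised around the counting identity.
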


\begin{proof}
Since $\bm S$ is connected, there exists an ordering $\sigma$, such that $\sX_{\bm S}(t,\sigma) \geq 1$ for every $t \in [2,N]$. Suppose that for every $t \in [2,N]$, $\sX_{\bm S}(t,\sigma) \in \{1,R\}$, and, towards a contradiction, assume that for every $2\leq t\leq N$, either $\sX_{\bm S}(t,\sigma) = 1$, or $\bar{\bm s}_{\sigma(t)} \in \{\bar{\bm s}_{\sigma(1)},\dots, \bar{\bm s}_{\sigma(t-1)}\}$. Define
$$k=\big|\{t\in[2,N]: \sX_{\bm S}(t,\sigma) = 1\}\big| .$$ Then, $b = 1+k$ and $|\bigcup_{j=1}^N \bar{\bm s}_j| = R + k(R-1)$. This yields a contradiction, because $$\Big|\bigcup_{j=1}^N \bar{\bm s}_j\Big| = R + (b-1)(R-1) = bR-b+1 .$$ 	

Hence, there exists $2\leq t\leq N$ such that $\sX_{\bm S}(t,\sigma)=R$ and $\bar{\bm s}_{\sigma(t)} \notin \{\bar{\bm s}_{\sigma(1)},\dots, \bar{\bm s}_{\sigma(t-1)}\}$. Define 
\begin{equation*}
t_0 = \inf \big\{2\leq t\leq N:\sX_{\bm S}(t,\sigma)=R~\textrm{and}~\bar{\bm s}_{\sigma(t)} \notin \{\bar{\bm s}_{\sigma(1)},\dots, \bar{\bm s}_{\sigma(t-1)}\} \big\} ~\textrm{and}
\end{equation*}
\begin{equation*}
t_1 = \inf \big\{1\leq t < t_0: \bar{\bm s}_{\sigma(t_0)}\cap \bar{\bm s}_{\sigma(t_1)} \neq \emptyset \} .
\end{equation*} 
Clearly, there exists a permutation $\tau: [R]\rightarrow [R]$ such that $\tau(1) = \sigma(t_0)$, $\tau(2) = \sigma(t_1)$ and $\sX_{\bm S}(t,\tau) \geq 1$ for every $t \in [2,N]$. By the definition of $t_0$, it follows that $\bar{\bm s}_{\sigma(t_1)} \neq \bar{\bm s}_{\sigma(t_0)}$, and hence, if $\big|\bar{\bm s}_{\sigma(t_0)}\cap \bar{\bm s}_{\sigma(t_1)}\big| \geq 2$, then $\sX_{\bm S}(2,\tau) \in [2,R-1]$, as required.

So, suppose that $\big|\bar{\bm s}_{\sigma(t_0)}\cap \bar{\bm s}_{\sigma(t_1)}\big| = 1$, and let $\{s\} = \bar{\bm s}_{\sigma(t_0)}\cap \bar{\bm s}_{\sigma(t_1)}$. Define 
\begin{equation*}
t_2 = \inf \big\{t_1< t < t_0: (\bar{\bm s}_{\sigma(t_0)}\setminus \{s\})\cap \bar{\bm s}_{\sigma(t)} \neq \emptyset \} .
\end{equation*}	
Once again, there exists a permutation $\kappa: [R]\rightarrow [R]$ such that $\kappa(1) = \sigma(t_0)$, $\kappa(2) = \sigma(t_2)$ and $\sX_{\bm S}(t,\kappa) \geq 1$ for every $t \in [2,N]$. So, if $\big|\bar{\bm s}_{\sigma(t_0)}\cap \bar{\bm s}_{\sigma(t_2)}\big| \geq 2$, then $\sX_{\bm S}(2,\kappa) \in [2,R-1]$, as desired.

Hence, assume that $\big|\bar{\bm s}_{\sigma(t_0)}\cap \bar{\bm s}_{\sigma(t_2)}\big| = 1$. Now, there exists a permutation $\theta: [R]\rightarrow [R]$ satisfying: 
 
	\[   
	\theta(t) = 
	\begin{cases}
	\sigma(t) &\quad\text{if} ~1\leq t\leq t_2\\
	\sigma(t_0) &\quad\text{if} ~t=t_2+1\\ 
	\end{cases}
	\]
and $\sX_{\bm S}(t,\theta) \geq 1$ for every $t \in [2,N]$.	Now, it is easy to see that $\sX_{\bm S}(t_2+1,\theta) = 2$, completing the proof of lemma \ref{connected}.
\end{proof}

\section{Convergence Under Double Limit}

Here, we prove the lemma which establishes distributional convergence from moment convergence under the double limit as in \eqref{eq:T_moments}. 

\begin{lem}\label{lm:characteristic_function}
Suppose $\{X_{n, \varepsilon}\}_{n \geq 1, \varepsilon >0}$ be a sequence of real-valued random variables satisfying, for every integer $r\geq 1$, 
\begin{align}\label{eq:XZ}
\limsup_{\varepsilon\rightarrow 0}\limsup_{n\rightarrow\infty}|\E(X_{n, \varepsilon}^r)-\E(Z^r)|=0,
\end{align} 
where $Z$ is a random variable with $\E e^{t Z}<\infty$, for any $t\in \R$. Then for any $t\in \R$, we have $$\limsup_{\varepsilon\rightarrow 0}\limsup_{n\rightarrow\infty}|\E(e^{itX_{n, \varepsilon}})-\E(e^{itZ})|=0,$$ 
that is, $X_{n, \varepsilon}$ converges in distribution to $Z$, as $n \rightarrow \infty$ followed by $\varepsilon \rightarrow 0$. 
\end{lem}

\begin{proof} Fix $K \geq 2$ even. Then for $t\in \R$, by a Taylor's series expansion, 
$$\left|e^{it}-\sum_{s=0}^{K-1}\frac{(it)^s}{s!} \right|\le \frac{|t|^K}{K!}.$$
Using this along with triangle inequality gives,
$$|\E(e^{itX_{n, \varepsilon}})-\E(e^{itZ})|\le \sum_{s=0}^{K-1} \frac{|t|^s}{s!}|\E(X_{n, \varepsilon}^s)-\E(Z^s)|+\frac{|t|^{K}}{K!}\Big\{\E|X_{n, \varepsilon}|^{K}+\E|Z|^{K}\Big\}.$$
On letting $n\rightarrow\infty$ followed by $\varepsilon\rightarrow0$ and using \eqref{eq:XZ} gives, 
$$\limsup_{\varepsilon\rightarrow 0}\limsup_{n\rightarrow\infty}|\E(e^{itX_{n, \varepsilon}})-\E(e^{itZ})|\le \frac{2|t|^{K}}{K!}\E|Z|^{K},$$
From this, the desired conclusion follows by taking limit $K \rightarrow \infty$, along the  even integers, on both sides, since $\E(e^{|tZ|})\le \E(e^{tZ})+\E(e^{-tZ})<\infty$, and recalling that $\E e^{|t Z|}=\sum_{s=0}^\infty \frac{|t|^s}{s!}\E|Z|^s.$
\end{proof}

\end{document}